\documentclass[final,1p,11pt,times]{elsarticle}
\usepackage[utf8]{inputenc}
\usepackage[dvipsnames]{xcolor}
\usepackage[shortlabels]{enumitem}
\usepackage{graphicx}
\usepackage{amsmath,amssymb,amsthm,mathtools}
\usepackage{latexsym, verbatim}
\usepackage{appendix}
\usepackage{units}
\usepackage{amsmath}
\usepackage[normalem]{ulem}
\usepackage[hidelinks]{hyperref}
\usepackage{algorithm}
\usepackage{algpseudocode}
\usepackage{hyperref} 
\usepackage{soul}
\usepackage{nicematrix}
\usepackage{pgfplots}
\pgfplotsset{compat=1.18}
\usetikzlibrary{pgfplots.groupplots}


\mathtoolsset{showonlyrefs}
\setcounter{MaxMatrixCols}{20}

\newtheorem{theorem}{Theorem}[section]
\newtheorem{corollary}[theorem]{Corollary} 
\newtheorem{lemma}[theorem]{Lemma}
\newtheorem{proposition}[theorem]{Proposition}
\newtheorem{question}[theorem]{Question}

\newtheorem{remark}[theorem]{Remark}

\theoremstyle{definition}
\newtheorem{example}[theorem]{Example}
\newtheorem{definition}[theorem]{Definition}

\newcommand{\fa}{\mathbf{a}}

\newcommand{\fx}{\mathbf{x}}

\newcommand{\Ucal}{\mathcal{U}}

\newcommand{\Vcal}{\mathcal{V}}

\newcommand{\CC}{\mathbb{C}}
\newcommand{\NN}{\mathbb{N}}

\newcommand{\PP}{\mathbb{P}}
\newcommand{\RR}{\mathbb{R}}

\newcommand{\Ncal}{\mathcal{N}}

\DeclareMathOperator{\SV}{SV}
\DeclareMathOperator{\codim}{codim}
\DeclareMathOperator{\Gr}{Gr}
\DeclareMathOperator{\Sym}{Sym}
\DeclareMathOperator{\Span}{Span}
\DeclareMathOperator{\rank}{rank}

\DeclareMathOperator{\Sing}{Sing}

\DeclareMathOperator{\Vect}{Vect}

\DeclareMathOperator{\Vol}{Vol}
\DeclareMathOperator{\GL}{GL}
\newcommand{\T}{^\mathsf{T}}

\newcommand{\Zcal}{\mathcal{Z}}

\makeatletter
\def\ps@pprintTitle{%
 \let\@oddhead\@empty
 \let\@evenhead\@empty
 \let\@oddfoot\@empty
 \let\@evenfoot\@empty
}
\makeatother
\begin{document}


\begin{frontmatter}

\title{{\bf A Real Generalized Trisecant Trichotomy}}

\author[label1]{Kristian Ranestad}
\affiliation[label1]{organization={Department of Mathematics, University of Oslo}, country = {Norway}}

\author[label2]{Anna Seigal}

\author[label2]{Kexin Wang}

\affiliation[label2]{organization = {John A. Paulson School of Engineering and Applied Sciences, Harvard University}, country = {United States}}

\begin{abstract}
The classical trisecant lemma says that a general chord of a non-degenerate space curve is not a trisecant; that is, the chord only meets the curve in two points. The generalized trisecant lemma extends the result to higher-dimensional varieties. It states that the linear space spanned by general points on a projective variety intersects the variety in exactly these points, provided the dimension of the linear space is smaller than the codimension of the variety and that the variety is irreducible, reduced, and non-degenerate. We prove a real analog of the generalized trisecant lemma, which takes the form of a trichotomy. Along the way, we characterize the possible numbers of real intersection points between a real projective variety and a complementary dimension real linear space. We show that any integer of correct parity between a minimum and a maximum number can be achieved. We then specialize to Segre-Veronese varieties, where our results apply to the identifiability of independent component analysis,  tensor decomposition, and typical tensor ranks.
\end{abstract}

\begin{keyword}
Real algebraic geometry, tensor decomposition, independent component analysis

\MSC[] 14P05 \sep  14P25 \sep 14C20 \sep 14M25 \sep 15A69 \sep 62R01

\end{keyword}

\end{frontmatter}

\section{Introduction}
A trisecant is a line that meets a variety in three points.
The classical trisecant lemma says that if $X$ is a non-degenerate irreducible curve in $\PP_\CC^3$, then the variety of trisecants has dimension one in the Grassmannian of lines in $\PP_\CC^3$, which we denote by $\Gr(1,3)$. 
Hence a general chord of $X$ is not a trisecant, 
since the variety of chords has dimension two in $\Gr(1,3)$.
The trisecant lemma has been generalized in various ways. 
We consider a generalization to higher-dimensional varieties. 
Recall that a variety is non-degenerate if not contained in a hyperplane.

\begin{theorem}[A Generalized Trisecant Lemma, see {\cite[Proposition 2.6]{chiantini2002weakly}}]\label{lemma:trisecant}
Let $X \subseteq \PP_\CC^{N-1}$ be an irreducible, reduced, non-degenerate projective variety of dimension $d$ and let $n$ be a positive integer with $n+d < N$.
Let $P_1,\ldots,P_n$ be general points on $X$. Then the intersection of $X$ with
the subspace spanned by $P_1,\ldots,P_n$ consists only of the points $P_1,\ldots,P_n$.
\end{theorem}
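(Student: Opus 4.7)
Proof plan: The approach is a dimension count on the natural incidence variety
\[
\mathcal{I} \;=\; \overline{\{(P_1, \ldots, P_n, Q) \in X^{n+1} : Q \in \Span(P_1, \ldots, P_n),\ Q \notin \{P_1, \ldots, P_n\}\}},
\]
together with its two projections $\pi_1 : \mathcal{I} \to X^n$ onto the first $n$ factors and $\pi_2 : \mathcal{I} \to X$ onto the last. The theorem is equivalent to the non-dominance of $\pi_1$; since $X^n$ is irreducible of dimension $nd$, this reduces to showing $\dim \mathcal{I} < nd$.

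I would bound $\dim \mathcal{I}$ via the fibers of $\pi_2$. For fixed $Q \in X$, the fiber $\pi_2^{-1}(Q) \subseteq X^n$ is cut out by the condition that the $N \times (n+1)$ matrix with columns $P_1, \ldots, P_n, Q$ has rank $\le n$. By the classical formula for the codimension of determinantal varieties, this defines a codimension-$(N-n)$ subvariety of $(\PP_\CC^{N-1})^n$ (the bound $n+1 \le N$ needed for this to be nontrivial follows from $n+d<N$ and $d\ge 0$). Granting that this codimension is preserved on restriction to $X^n$, we obtain $\dim \pi_2^{-1}(Q) = nd - (N-n)$ for generic $Q$, and hence
\[
\dim \mathcal{I} \;\le\; \dim X + \dim \pi_2^{-1}(Q) \;=\; (n+1)d + n - N,
\]
which is strictly less than $nd$ precisely when $n+d<N$, matching the hypothesis.

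The main obstacle is the transversality step: showing that the rank condition really does impose codimension $N-n$ when restricted to $X^n$, rather than less. Non-degeneracy of $X$ is essential here, since if $X$ lay in a hyperplane the effective ambient dimension would drop by one and the rank condition would impose one fewer constraint. I would attempt transversality by taking $(P_1, \ldots, P_n, Q)$ generic in $X^{n+1}$ and analysing the Jacobian of the defining determinantal equations: by non-degeneracy, the tangent spaces $T_{P_i} X$, together with the points $P_i$ and $Q$ themselves, should span all of $\CC^N$, giving the full expected codimension. An alternative inductive route projects $X$ from $L = \Span(P_1, \ldots, P_{n-1})$ and reduces the problem to showing that the induced rational map $X \dashrightarrow \PP_\CC^{N-n}$ onto its (still non-degenerate) image is birational, but this runs into essentially the same transversality issue. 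This is presumably where Chiantini and Ciliberto invoke their analysis of contact loci of weakly defective varieties.
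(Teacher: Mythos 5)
Your incidence-correspondence setup is a valid \emph{reduction} of the statement, but the transversality claim --- that for generic $Q\in X$ the locus $\{(P_1,\ldots,P_n)\in X^n : \rank[P_1|\cdots|P_n|Q]\le n\}$ has codimension $N-n$ in $X^n$ --- is left unproved, and this is where all the substance of the theorem lies. Your proposed Jacobian argument for it has a concrete flaw: you suggest analyzing the differential at a tuple $(P_1,\ldots,P_n,Q)$ \emph{generic in $X^{n+1}$}, but since $X$ is non-degenerate and $n+1\le N$ (in fact $n+1<N$ because $d\ge 1$), a generic $(n+1)$-tuple of points of $X$ spans a $\PP^n$ and therefore does not lie in $\mathcal{I}$ at all. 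You would need transversality at a generic point \emph{of $\mathcal{I}$}, where the $P_i$, $Q$ and their tangent directions are already constrained by the very linear dependence you imposed, and there is no a priori reason they sit in general position with respect to the determinantal equations. So the dimension count is correct bookkeeping, but by itself it establishes nothing; the content of the theorem has been relocated, not resolved.

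For comparison, the paper does not prove this statement but cites Chiantini--Ciliberto, Proposition~2.6. The standard arguments go by inner projection and induction on $n$, reducing ultimately to the claim that the projection of $X$ from a general point of itself is birational onto a non-degenerate image; for curves this is precisely the classical trisecant lemma, proved via the General Position (Uniform Position) Theorem or a monodromy argument rather than a bare dimension count. As you anticipate, your alternative projection route meets the same obstruction, so neither version of your sketch constitutes a complete proof.
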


The generalized trisecant lemma can be restated as the following trichotomy.

\begin{theorem}[Reformulation of Theorem \ref{lemma:trisecant}]
\label{thm:complex_trichotomy}
    Let $X\subseteq \PP_\CC^{N-1}$ be an irreducible, reduced, non-degenerate projective variety of dimension $d$. Let $P_1,\ldots,P_n$ be general points on $X$ and let $W$ be the projective linear space they span.
Then 
\begin{enumerate}[(a)]
    \item If $n+d<N$, then $X\cap W =\{P_1,\ldots,P_n\}$.
    \item  If $n+d=N$, then $\deg X\geq n$. 
    When $\deg X>n$, $X\cap W \supsetneqq\{P_1,\ldots,P_n\}$. 
    When $\deg X = n$, $X\cap W =\{P_1,\ldots,P_n\}$ and 
    $X$ is called a variety with minimal degree; it is either a quadric hypersurface, a cone over the Veronese surface, or a rational normal scroll.
    \item If $n+d>N$, then $X\cap W \supsetneqq \{P_1,\ldots,P_n\}.$
\end{enumerate}
\end{theorem}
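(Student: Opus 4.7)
Part (a) is Theorem~\ref{lemma:trisecant} verbatim. Part (c) is a dimension count: since $P_1,\ldots,P_n$ lie in the intersection $X\cap W$, it is nonempty, so every irreducible component of $X\cap W$ has dimension at least $\dim X+\dim W-(N-1)=(n+d)-N>0$. Hence $X\cap W$ has positive dimension and therefore strictly contains the finite set $\{P_1,\ldots,P_n\}$.

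The main case is (b), where $\dim W+\dim X=N-1$ so that $W$ has complementary dimension to $X$. The plan is to show $|X\cap W|=\deg X$ via a projection and then read off everything from the bound $\deg X\ge n$. First I apply Theorem~\ref{lemma:trisecant} to the first $n-1$ points: since $(n-1)+d=N-1<N$, the span $W_0=\Span(P_1,\ldots,P_{n-1})$ has dimension $n-2$ and $X\cap W_0=\{P_1,\ldots,P_{n-1}\}$, with each $P_i$ a smooth point of $X$ by genericity. Next, consider the linear projection $\pi_{W_0}$ from $W_0$, a rational map $\PP^{N-1}\to\PP^d$ (the target has dimension $d=N-n$). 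A dimension count shows $\pi_{W_0}|_X$ is dominant and generically finite, and the projection formula (applied after blowing up $X$ at the $n-1$ base points) gives $\deg(\pi_{W_0}|_X)=\deg X-(n-1)$. For generic $P_n\in X$, the fiber of $\pi_{W_0}|_X$ over $\pi_{W_0}(P_n)$ has exactly $\deg X-(n-1)$ reduced points and equals $(X\cap W)\setminus\{P_1,\ldots,P_{n-1}\}$ for $W=\Span(W_0,P_n)=\Span(P_1,\ldots,P_n)$. Combining, $|X\cap W|=(n-1)+(\deg X-(n-1))=\deg X$.

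Since the points $P_1,\ldots,P_n$ are distinct and all lie in $X\cap W$, we obtain $\deg X\ge n$. If $\deg X>n$ the cardinality forces $X\cap W\supsetneqq\{P_1,\ldots,P_n\}$, while if $\deg X=n$ then $X\cap W=\{P_1,\ldots,P_n\}$. The classification of non-degenerate varieties of minimal degree $\deg X=\codim X+1$ is the classical del Pezzo--Bertini theorem, giving the three families listed. The main obstacle is making the projection argument rigorous: one must check that $\pi_{W_0}|_X$ is dominant with image all of $\PP^d$ (rather than a proper subvariety), that the projection formula produces the claimed degree after resolving the $n-1$ base points, and that for generic $P_n$ the fiber consists of reduced distinct points avoiding $P_1,\ldots,P_{n-1}$; each follows from standard Bertini-type genericity arguments applied to the $\PP^d$-family of $(n-1)$-planes containing $W_0$.
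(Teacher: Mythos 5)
Your parts (a) and (c) coincide with the paper's. Part (b) is where the two approaches genuinely diverge. The paper establishes $\deg X \ge n$ by intersecting $X$ with a \emph{fully generic} $(n-1)$-plane $V$: then $\#(X\cap V)=\deg X$, and by the general position lemma (non-degeneracy, \cite[Prop.\ 18.10]{harris2013algebraic}) those points span $V$, forcing $\deg X\ge n$. The claim that the special plane $W=\Span(P_1,\ldots,P_n)$ also meets $X$ in $\deg X$ reduced points — needed for the ``$\deg X>n\Rightarrow X\cap W\supsetneqq\{P_i\}$'' step — is left implicit in the paper (it amounts to the fact that spans of general $n$-tuples of $X$-points are dense in $\Gr(n-1,N-1)$). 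You instead project from $W_0=\Span(P_1,\ldots,P_{n-1})$, use the trisecant lemma on $n-1$ points to control the base locus, and compute the degree of $\pi_{W_0}|_X$ to get $\#(X\cap W)=\deg X$ head-on; this makes the implicit genericity explicit and gives a cleaner single statement from which all three sub-cases of (b) follow. The trade-off is that your route requires verifying dominance of $\pi_{W_0}|_X$, the degree-drop formula $\deg X-(n-1)$ through the blowup of the $n-1$ smooth base points, and reducedness/disjointness of the generic fiber — all standard Bertini/generic-smoothness facts in characteristic $0$, which you correctly flag but do not spell out. Both proofs are valid; yours is more constructive and self-contained, the paper's is terser and leans on the general position lemma.

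One small caution worth noting explicitly: the formula $\deg(\pi_{W_0}|_X)=\deg X-(n-1)$ uses that each $P_i$ is a \emph{smooth} point of $X$ at which $W_0$ meets $X$ transversally. Smoothness of general points is automatic for a reduced irreducible $X$, but transversality of $W_0$ at each $P_i$ is an additional genericity condition on the tuple $(P_1,\ldots,P_{n-1})$; it holds, but you should cite it rather than fold it silently into ``the projection formula.''
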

\begin{proof}
    The case $n+d<N$ is the generalized trisecant lemma.
        When $n+d=N$, the degree of $X$ is at least $N-d=n$, since
        $\deg X$ is the number of intersection points between $X$ and a generic linear space of dimension $n-1$ and $X$ is non-degenerate so 
        the intersection points span the linear space. 
    When $\deg X>n$, the intersection $X \cap W$ contains points other than $P_1,\ldots,P_n$. 
    When $\deg X = n$, the variety $X$ has minimal degree and the intersection of $X$ with $W$ is precisely $P_1,\ldots,P_n$. 
    For the classification of irreducible non-degenerate projective varieties with minimal degree, see e.g. \cite[Theorem 19.9]{harris2013algebraic}.   
    When $n+d>N$, the intersection between $X$ and $W$ has dimension at least $n-1+d-N>0$ so it contains infinitely many points. In particular, the intersection contains a point other than $P_1,\ldots,P_n$. 
\end{proof}

A tensor is a multidimensional array and a tensor decomposition writes a tensor as a sum of rank one tensors.
Suppose we have a tensor $T = \sum_{i=1}^r x_i$ where $x_1,\ldots,x_r$ are rank one tensors and that we can recover $V:=\Span\{x_1,\ldots,x_r\}$. 
The tensor decomposition is unique when the linear space $V$ intersects the variety of rank one tensors $X$ in precisely these $r$ points $x_1,\ldots,x_r$ and this is the content of the generalized trisecant trichotomy~\cite[Proposition 3.2]{kileel2025subspace}.
For many applications in statistics \cite{anandkumar2014tensor,mccullagh2018tensor,robeva2019duality,bi2021tensors} and data analysis \cite{gtex2015genotype,hore2016tensor,wang2025contrastive}, we are often only interested in real tensor decompositions.
So, a natural question is to find a real analog of the generalized trisecant trichotomy
by restricting the points we use to span the linear space to be real and asking if there is an extra real point in the intersection of the linear space and the variety.

We call a variety $X\subseteq \PP_\CC^{N-1}$ a \emph{real projective variety} if it is irreducible, reduced, non-degenerate, and can be written as the vanishing locus of some real homogeneous polynomials. We use \emph{$X_\RR$} to denote the collection of real points in $X$ with induced Euclidean topology. 
More precisely, we take the topology to be the quotient topology induced from the Euclidean topology on $\RR^N$ by making the quotient map $\RR^N \mapsto \PP_\RR^{N-1}$ continuous.
Similarly, we say a linear space is real if it is defined by real linear forms.
When we talk about the dimension of a linear space, we always mean its projective dimension.

It turns out that the real analog of Theorem \ref{thm:complex_trichotomy} depends on the set of possible numbers of real intersection points between $X$ and a complementary dimension real linear space.
Bounds on such numbers are studied in e.g. \cite{sturmfels1994number,okonek2014intrinsic,soprunova2006lower,hein2016congruence}.

\begin{definition}
\label{def:possible}
Let $X\subseteq \PP_\CC^{N-1}$ be a real projective variety with $\dim X=d$. 
We define the set of integers $\mathcal{N}(X)$ to be the possible numbers of real points that can be obtained after intersecting $X$ with a sufficiently general complementary dimension linear space. That is, 
$$
\mathcal{N}(X) := \left\{\, \#(X\cap W)_\RR:\,\, \begin{matrix} W \text{ real linear space with } \dim W=N-1-d \\  \text{ that intersects } X \text{ transversely} \end{matrix} \, \right\}.
$$
We call $\mathcal{N}(X)$ the {\em set of possible numbers of real solutions} for $X$.
\end{definition}

Our first contribution is to characterize $\mathcal{N}(X)$.
We denote the minimum  and maximum elements of the set $\mathcal{N}(X)$ by  $\mathcal{N}(X)_{\min}$ and $\mathcal{N}(X)_{\max}$. We assume that the variety has a smooth real point, to ensure its real locus is Zariski dense.

\begin{proposition}\label{thm: N(X)}
Let $X\subseteq \PP_\CC^{N-1}$ be a smooth real projective variety of dimension $d$ with a smooth real point. Then the set of possible numbers of real solutions $\mathcal{N}(X)$ satisfies
\begin{enumerate}[(i)]
    \item for $p\in \mathcal{N}(X)$, we have $p\equiv \deg X \mod 2$;
    \item $N-d \leq \mathcal{N}(X)_{\max} \leq \deg X$;
    \item $\mathcal{N}(X)=\{k: \mathcal{N}(X)_{\min} \leq k \leq \mathcal{N}(X)_{\max}, k\equiv \deg X \mod 2\}$.
\end{enumerate}
\end{proposition}

Proposition~\ref{thm: N(X)} may be known or intuitive to experts in real algebraic geometry, but to the best of our knowledge, a statement and proof is missing from the literature.
Our proof studies how real solutions change across the branch locus, which occurs in computing real homotopies between polynomial systems \cite{li1993solving}.
Here is an example to illustrate Proposition~\ref{thm: N(X)}.

\begin{example}
Consider the Edge quartic $C$ defined by 
\begin{equation}
    \label{eqn:edge}
25(x^4+y^4+z^4)-34(x^2y^2+x^2z^2+y^2z^2)=0,
\end{equation} taken from \cite{plaumann2011quartic}. 
It is one of the curves studied by William L. Edge in \cite{edge1938determinantal}, which admits a matrix representation over $\mathbb{Q}$.
Hyperplanes in $\PP_\CC^2$ 
can be viewed as points in $(\PP_\CC^2)^\ast$ with coordinates $[u,v,w]$. 
Generic hyperplanes
intersect $C$ transversely.
Those who intersect $C$ singularly form the dual curve $C^\vee$ defined by
\begin{align*}
10000u^{12} - 98600u^{10}v^2 - 98600u^{10}w^2 + 326225u^8v^4 + 85646u^8v^2w^2 + 326225u^8w^4 - 442850u^6v^6\\
-120462u^6v^4w^2 - 120462u^6v^2w^4 - 442850u^6w^6 + 326225u^4v^8 - 120462u^4v^6w^2 + 398634u^4v^4w^4\\
-120462u^4v^2w^6 + 326225u^4w^8 - 98600u^2v^{10} + 85646u^2v^8w^2 - 120462u^2v^6w^4 - 120462u^2v^4w^6\\
+85646u^2v^2w^8 - 98600u^2w^{10} + 10000v^{12} - 98600v^{10}w^2 
+ 326225v^8w^4 - 442850v^6w^6 \\
+ 326225v^4w^8 - 98600v^2w^{10} + 10000w^{12} = 0,
\end{align*}
see \cite[Example 5.2]{kaihnsa2019sixty}, where the authors study real lines that avoid $C$.
For any hyperplane in a fixed region of $(\PP_\RR^2)^\ast-C^\vee_\RR$, the number of real intersection points with $C$ is constant.
We plot $C^\vee_\RR$ and label each region of $(\PP^2_\RR)^\ast-C^\vee_\RR$ by the number of real intersection points with $C$ in Figure \ref{fig:dual curve}.
\begin{figure}[htbp]
    \centering
    \includegraphics[width=0.5\linewidth]{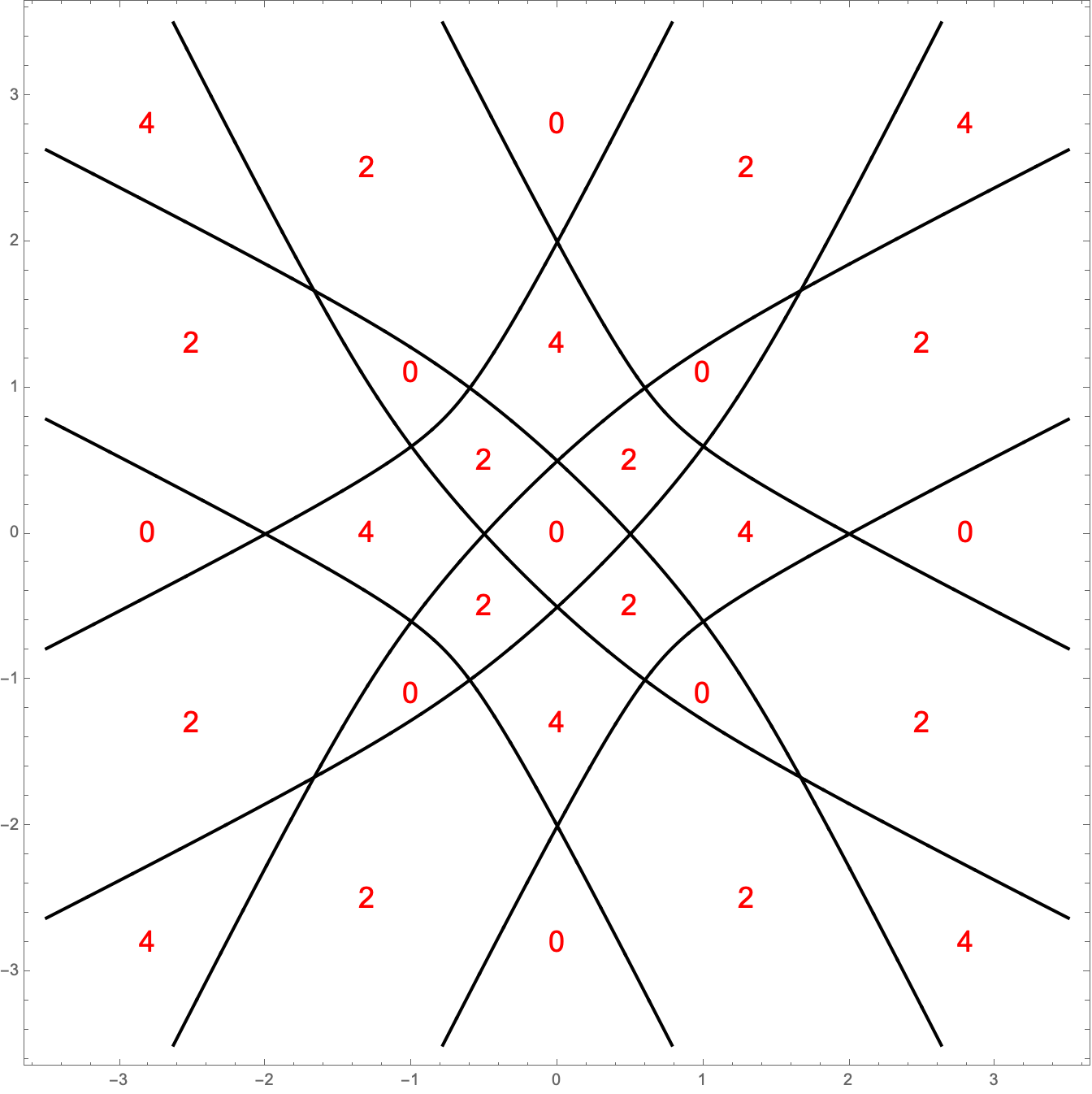}
    \caption{Dual curve of the Edge quartic~\eqref{eqn:edge} with regions labeled by the number of real intersection points
    }
    \label{fig:dual curve}
\end{figure}
If two regions are adjacent (that is, connected via smooth points in $C^\vee_\RR$) we see that their numbers of real intersection points differ by two.
\end{example}

We use Proposition \ref{thm: N(X)} to prove the following result.
Let $(X_\RR)^n$ denote the set of $n$-tuples of points on $X_\RR$ with the product topology. 
We say a probability measure on $(X_\RR)^n$ is \emph{strictly positive} if any non-empty open subset of $(X_\RR)^n$ has positive measure.

\begin{theorem}[A Real Generalized Trisecant Trichotomy]\label{thm: real trisecant lemma}
Let $X\subseteq \PP_\CC^{N-1}$ be a smooth real projective variety of dimension $d$ with a smooth real point.
Let $P_1,\ldots,P_n$ be points on $X_\RR$, sampled randomly from a strictly positive probability measure on $(X_\RR)^n$.
Let $W$ be the projective linear space they span. 
Then 
\begin{enumerate}[(a)]
    \item When $n+d<N$, $(X\cap W)_\RR=\{P_1,\ldots,P_n\}$ with probability $1$.
    \item  When $n+d=N$ and $\deg X\equiv n \mod 2$, 
    \begin{enumerate}[(i)]
        \item $(X\cap W)_\RR=\{P_1,\ldots,P_n\}$ with probability $0$ if $\mathcal{N}(X)_{\min}>n$;
        \item $(X\cap W)_\RR=\{P_1,\ldots,P_n\}$ with probability $0<p<1$ if $\mathcal{N}(X)_{\min}\leq n<\mathcal{N}(X)_{\max}$;
        \item $(X\cap W)_\RR=\{P_1,\ldots,P_n\}$ with probability $1$ if $\mathcal{N}(X)_{\max}=n$.
    \end{enumerate}
    \item When $n+d>N$ or $n+d=N$ and $\deg X\not\equiv n \mod 2$, $(X\cap W)_\RR = \{P_1,\ldots,P_n\}$ with probability~$0$. Moreover, when $n+d>N$, $(X\cap W)_\RR$ has positive dimension, so it contains infinitely many real points.
\end{enumerate}
\end{theorem}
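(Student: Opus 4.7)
The plan is to dispatch the three cases separately, using Theorem~\ref{thm:complex_trichotomy} and Theorem~\ref{thm: N(X)} as the main inputs. Case (a) is essentially immediate: Theorem~\ref{thm:complex_trichotomy}(a) gives $X\cap W=\{P_1,\ldots,P_n\}$ over $\CC$ outside a proper Zariski closed subset $Z\subset X^n$, and since $X$ is smooth with a smooth real point, $(X_\RR)^n$ is a real analytic manifold of dimension $nd$ not contained in $Z_\RR$ (else $Z\supseteq X^n$). Hence for general real $(P_1,\ldots,P_n)$ the real intersection agrees with the full complex intersection $\{P_1,\ldots,P_n\}$.

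For case (c), I would split in two. When $n+d>N$, the complex intersection has dimension $\ge n+d-N\ge 1$ at each $P_i$. A standard non-degeneracy argument (projecting $X$ from $T_{P_i}X$ onto $\PP^{N-2-d}$; the projection is non-degenerate, so $n-1$ generic points span the target when $n-1\ge N-1-d$) shows that for generic $P_j$ one has $T_{P_i}X+W=T_{P_i}\PP^{N-1}$, making $X\cap W$ smooth of positive dimension at $P_i$. Since $P_i$ is a real smooth point of a variety cut out over $\RR$, a Euclidean neighborhood of $P_i$ in $(X\cap W)_\RR$ is a real manifold of positive dimension, hence contains infinitely many real points. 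When $n+d=N$ and $\deg X\not\equiv n\pmod 2$, generic $(P_1,\ldots,P_n)$ spans a transverse $W$, and Theorem~\ref{thm: N(X)}(i) forces $\#(X\cap W)_\RR\equiv \deg X\pmod 2$, so it cannot equal $n$; being at least $n$, it must exceed $n$.

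The substantive case is (b). Let $U\subseteq(X_\RR)^n$ be the open subset of tuples that span an $(n-1)$-dimensional linear space $W$ meeting $X$ transversely; the complement has measure zero under any strictly positive product measure. Consider the span map $\phi\colon U\to \Gr_\RR(n-1,N-1)$. By local constancy of real intersection counts on the transverse locus $V\subseteq \Gr_\RR$, we get a decomposition into open strata $V_k=\{W:\#(X\cap W)_\RR=k\}$; by Theorem~\ref{thm: N(X)}(iii), $V_k\ne\emptyset$ exactly when $\mathcal{N}(X)_{\min}\le k\le\mathcal{N}(X)_{\max}$ and $k\equiv \deg X\pmod 2$. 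Clearly $\phi(U)\subseteq \bigcup_{k\ge n}V_k$; conversely, given $W_0\in V_k$ with $k\ge n$, pick $n$ of its $k$ real intersection points (which span $W_0$ generically, by non-degeneracy of $X$) and perturb within $X_\RR$. Stability of transverse intersections shows the perturbed tuple still maps into $V_k$, so $\phi^{-1}(V_k)$ contains an open set and thus has positive measure.

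The three sub-cases now follow. In (b)(iii), $\mathcal{N}(X)_{\max}=n$ forces $\phi(U)\subseteq V_n$, yielding $(X\cap W)_\RR=\{P_1,\ldots,P_n\}$ with probability $1$. In (b)(i), $\mathcal{N}(X)_{\min}>n$ puts $\phi(U)\subseteq \bigcup_{k>n}V_k$, giving strict containment almost surely. In (b)(ii), the parity hypothesis together with $\mathcal{N}(X)_{\min}\le n<\mathcal{N}(X)_{\max}$ places both $n$ and some $k'>n$ in $\mathcal{N}(X)$, so $V_n$ and $V_{k'}$ each contribute positive measure to $\phi^{-1}$, making the probability strictly between $0$ and $1$. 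The hard part is the positivity of $\phi^{-1}(V_k)$ for each non-empty $V_k$ with $k\ge n$: this rests on the span map being locally open near a tuple of $n$ real intersection points of a transverse $W_0$, combining non-degeneracy (so that $n$ generic intersection points span $W_0$) with smoothness of $X$ at each $P_i$ (so that the tuple can be perturbed within $X_\RR$).
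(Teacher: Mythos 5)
Your overall structure matches the paper's: both treat (a) by reducing to the complex generalized trisecant lemma, both treat (c) by a parity argument when $n+d=N$ and a local-dimension argument when $n+d>N$, and both treat (b) via the span map $\phi\colon (X_\RR)^n \dashrightarrow \Gr(n-1,N-1)_\RR$ and the chamber decomposition $\{\Ucal_k\}$ from the proof of Theorem~\ref{thm: N(X)}.

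There is, however, a genuine gap in your treatment of case (b), precisely at the point you flag as ``the hard part.'' You need to know that, for a generic transverse $W_0 \in \Ucal_k$ with $k \geq n$, one can choose $n$ of the $k$ \emph{real} intersection points that span $W_0$, and you attribute this to ``non-degeneracy of $X$.'' Non-degeneracy of $X$ (via \cite[Proposition~18.10]{harris2013algebraic}) only gives that $X\cap W_0$ is non-degenerate in $W_0$, i.e.\ that \emph{some} $n$ of the full set of $\deg X$ intersection points span $W_0$; it does not rule out the $k$ real points lying in a proper subspace of $W_0$, nor does it say that an arbitrary choice of $n$ points spans. What is actually needed is the stronger statement that \emph{every} subset of $n$ intersection points of $X \cap W_0$ is linearly independent for generic $W_0$. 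The paper proves this as a separate lemma by a dimension count: if some $n$-element subset were dependent for generic $W_0$, then generic $(n-1)$-spaces would be spanned by a $k$-space in the locus $Y_k$ of $k$-secant $k$-planes with an extra intersection point together with $n-1-k$ further points of $X$, and $\dim Y_k < (k+1)\dim X$ (by the generalized trisecant lemma) forces $\dim \Gr(n-1,N-1)$ to be strictly larger than the available parameter count, a contradiction. Without an argument of this type, the claim that $\phi^{-1}(\Ucal_k) \neq \emptyset$ for all $k \in \Ncal(X)$ with $k \geq n$ is not established.

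A secondary remark: in case (c) with $n+d>N$, the paper simply invokes \cite[Proposition~7.6.2]{bochnak2013real} after observing that $X\cap W$ has positive complex dimension and contains the smooth real points $P_i$, whereas you derive transversality of $W$ with $X$ at $P_i$ by projecting from $T_{P_i}X$. Your route is more self-contained and also proves the smoothness hypothesis that the paper takes as part of ``generality,'' but both are sound.
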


The real trisecant trichotomy is studied for second Veronese embeddings in \cite{wang2024identifiability}; this will be discussed more in Section \ref{sec: applications}.
We now give examples to illustrate Theorem \ref{thm: real trisecant lemma}(b). 

\begin{example}[$p=0$]\label{ex: open question 1}
Let $X$ be the curve in $\PP_\CC^3$ of degree $k+2e+1$ defined as in Construction 1 of \cite{kummer2021huisman}, where $k,e\in \NN_{>0}$.
It has $\Ncal(X) = \{k-1,k+1\}$. For an even integer $k>4$, $\Ncal(X)_{\min}> \codim X + 1 = 3$. 
Hence, $\#(X\cap W)_\RR=3$ has probability $0$.

Another example follows from \cite[Theorem 4.6, Corollary 4.15]{kummer2022hyperbolic}. 
Let $X\subseteq \PP_\CC^{2k+1}$ be the projection of the rational normal curve in $\PP^n_\CC$ from the $(n-2k-2)$ dimensional linear space defined in \cite[Corollary 4.15]{kummer2022hyperbolic}, where $k$ can be any integer such that $2k+2\leq n$.
The degree of $X$ is $n$ by definition. 
By \cite[Theorem 4.6]{kummer2022hyperbolic}, a generic real hyperplane in $\PP_\CC^{2k+1}$ intersects $X$ in at most $2k$ complex points. Hence, $\Ncal(X)_{\min}\geq n-2k$. Note that $\codim X + 1 = 2k+1 < \Ncal(X)_{\min}$ whenever $n>4k+1$.

\end{example}

\begin{example}[$0<p<1$]
Let $X$ be the second Veronese embedding of $\PP_\CC^{I-1}$ in $\PP_\CC^{N-1}$ where $N={I+1 \choose 2}$ and suppose $I\equiv 2,3 \mod 4$. Then the probability that $\#(X\cap W)_\RR=N-I+1$ is in $(0,1)$ since $\Ncal(X) = \{0,2,\ldots,2^{I-1}\}$,
by the proof of \cite[Proposition 5.10]{wang2024identifiability}. 
\end{example}

\begin{example}[$p=1$]
Consider the plane curve $X$ defined by $x_0^4+x_1^4=x_2^4$ in $\PP_\CC^2$. Its real part does not intersect the line at infinity $x_2=0$ and is convex and simply closed.
A generic real line in the plane either intersects $X$ in two points or avoids $X$.
So, $\Ncal(X)=\{0,2\}$ and the probability that $\#(X\cap W)_\RR=2$
is 1, since $\Ncal(X)_{\max}$ is the codimension of $X$ plus one.
\end{example}

So far we have characterized the set $\mathcal{N}(X)$ in relation to its minimum and maximum elements, but we have not said what these minimum and maximum are. 
We now find the minimum and maximum elements of $\mathcal{N}(X)$ for special varieties of tensors.

Here we focus on the Segre-Veronese varieties.
They are varieties of rank one partially symmetric tensors, see e.g.~\cite{abo2013dimensions,raicu2012secant}. 
We denote the Segre-Veronese variety of $\PP_\CC^{m_1}\times \cdots \times \PP_\CC^{m_n}$ with degrees $d_1,\ldots,d_n$ by $\SV_{(m_1,\ldots,m_n)}(d_1,\ldots,d_n)$.
The varieties $\SV_{(m_1,\ldots,m_n)}(1,\ldots,1)$ are the usual Segre varieties.
Let $\mathbf{x}_i=[x_{i,0},\ldots,x_{i,{m_i}}]$ be the projective coordinates of $\PP_\CC^{m_i}$.
The Segre-Veronese variety~$\SV_{(m_1,\ldots,m_n)}(d_1,\ldots,d_n)$ is the image of the monomial map that sends $(\mathbf{x}_1,\ldots,\mathbf{x}_n)$ to $(\prod_{i=1}^n (x_{i,0})^{d_i},\ldots,\prod_{i=1}^n(x_{i,{m_i}})^{d_i})$, the vector of all monomials with multidegree $(d_1,\ldots,d_n)$. We denote the point on $\SV_{(m_1,\ldots,m_n)}(d_1,\ldots,d_n)$ corresponding to $(\mathbf{x}_1,\ldots,\mathbf{x}_n)$ by $\mathbf{x}_1^{\otimes d_1} \otimes \cdots \otimes \mathbf{x}_n^{\otimes d_n}$.
Considered in the affine cone over the projective space, it lies in the space of partially symmetric tensors $\Sym_{d_1}\RR^{m_1+1}\otimes \cdots \otimes \Sym_{d_n}\RR^{m_n+1}$.
We prove the following.

\begin{theorem}\label{thm: N(X) for segre-veronese} Let $X$ be the  Segre-Veronese variety $\SV_{(m_1,\ldots,m_n)}(d_1,\ldots,d_n)$. Then the set of possible numbers of real solutions $\mathcal{N}(X)$ satisfies
\begin{enumerate}[(i)]
    \item $\Ncal(X)_{\max}=\deg X =\frac{(m_1+\ldots+m_n)!}{m_1!\cdots m_n!} \prod_{i=1}^n d_i^{m_i}$;
    \item When at least two of $m_1,\ldots,m_n$ are odd, then $\Ncal(X)_{\min}=0$;
    \item When at least one of $d_1,\ldots,d_n$ is even, then $\Ncal(X)_{\min}=0$;
    \item When all $d_i$ are odd, $\Ncal(X)\supseteq \Ncal(\SV_{(m_1,\ldots,m_n)}(1,\ldots,1)).$
    \end{enumerate}
    \end{theorem}

We leave the remaining case as an open problem.
\begin{question}\label{question: segre_veronese}
What is $\Ncal(\SV_{(m_1,\ldots,m_n)}(d_1,\ldots,d_n))_{\min}$ when $d_1,\ldots,d_n$ are all odd and there is at most one odd integer among $m_1,\ldots,m_n$?
\end{question}

We investigate the set $\Ncal(X)$ for small Segre varieties that fall under the setting of Question \ref{question: segre_veronese}. For each Segre variety considered, we sample random polynomial systems.
We use numerical homotopy computation methods from \cite{breiding2018homotopycontinuation} to compute the number of real solutions for each system, see Table \ref{table: segre}. 
Theorem~\ref{thm: N(X) for segre-veronese}(i) says that $\deg(X)$ real solutions occurs with positive probability.
However, in all but the first row, degree many real solutions did not occur in our finite samples, suggesting that its probability is small. 
For results on real root counts of random polynomials,
see \cite{dembo2002random,schehr2008real}.

\begin{table}[htbp]
\centering
\label{table: segre}
\begin{tabular}{|c|c|c|c|}
\hline
Segre of & degree & integer & Gaussian\\
\hline
$\mathbb{P}^2 \times \mathbb{P}^2$& 6& $[0,6]_2$& $[0,6]_2$\\
\hline 
$\PP^2 \times \PP^4$& 15 & $[1,13]_2$ & $[1,13]_2$\\
\hline
$\PP^2 \times \PP^2 \times \PP^2$& 90 & $[2,32]_2$ & $[4,30]_2$\\
\hline
$\PP^2 \times \PP^6$& 28& $[0,18]_2$ & $[0,18]_2$\\
\hline
$\PP^4 \times \PP^4$& 70& $[2,28]_2$ & $[2,26]_2$\\
\hline
$\PP^2 \times \PP^2 \times \PP^4$ & 420 & $[12,60]_2$ & $[14,62]_2$\\
\hline
$\PP^2 \times \PP^2 \times \PP^2 \times \PP^2$ & 2520 & $[70,146]_2$ & $[68,146]_2$\\
\hline
\end{tabular}
\caption{The number of real solutions obtained for different Segre varieties. 
We generate coefficients in two ways: random integer values in the range $[-20,20]$ and sampling from a standard Gaussian. 
We record the possible numbers of real solutions we obtain over 10000 sampled systems.
We use $[m,n]_2$ to denote all integers with the same parity as $m,n$ in the interval $[m,n]$. In all cases, the possible number of real solutions obtained is of this form and the frequencies (not displayed) are unimodal (e.g. for $\PP^2 \times \PP^2$, the frequencies for 0, 2, 4, 6 are 469, 5219, 3603, 709 respectively).
}
\end{table}

In special cases, we can say more about the set $\Ncal(X)$.

\begin{theorem}\label{thm: N(X) for small segre-veronese}
\begin{enumerate}[(i)]
\item $\Ncal(\SV_{(1,n)}(1,1))= \{\, k\,: 0 \leq k\leq n+1,\,\, k\equiv n+1 \mod 2 \, \}$;
\item $\Ncal(\SV_{(2,n)}(1,1))\supseteq \{\, k\,: \lfloor \frac{n-2}{2} \rfloor \leq k\leq \deg \SV_{(2,n)}(1,1),\,\, k\equiv  \deg \SV_{(2,n)}(1,1)\mod 2  \,\},$ for $n\geq 2$.
\end{enumerate}
\end{theorem}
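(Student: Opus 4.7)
The plan is to parametrize $X \cap W$ as the rank-drop locus of a matrix pencil and then analyze real solutions.

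For part (i), identify $\SV_{(1,n)}(1,1) = \PP_\CC^1 \times \PP_\CC^n$ with the variety of rank-one $2 \times (n+1)$ matrices in $\PP_\CC^{2n+1}$. A real subspace $W$ of dimension $n$ translates into a single system $(sA + tB)\mathbf{x} = 0$ for real $(n+1) \times (n+1)$ matrices $A, B$ encoding the $n+1$ defining linear forms, and the intersection points biject with the roots of the real binary form $\det(sA + tB)$ of degree $n+1$. Conversely, any real binary form of degree $n+1$ can be realized as $\det(sA + tB)$ for some real matrices $A, B$ (for instance, take $A = I_{n+1}$ and $B$ minus a companion matrix). Since real binary forms of degree $n+1$ can have any prescribed number $k$ of simple real roots with $0 \leq k \leq n+1$ and $k \equiv n+1 \pmod{2}$, and simple roots give transverse intersections, every such $k$ lies in $\Ncal(\SV_{(1,n)}(1,1))$. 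Combining with Theorem~\ref{thm: N(X)} yields (i).

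For part (ii), parametrize analogously: a real subspace $W$ of $\PP_\CC^{3n+2}$ of dimension $2n$ corresponds to a triple $(A, B, C)$ of real $(n+2) \times (n+1)$ matrices, and real intersection points correspond to real $[s:t:u] \in \PP^2$ at which the pencil $M(s,t,u) := sA + tB + uC$ has rank at most $n$. For generic $(A, B, C)$ the rank-drop scheme has length $\binom{n+2}{2} = \deg X$. By Theorem~\ref{thm: N(X)}(iii) combined with Theorem~\ref{thm: N(X) for segre-veronese}(i), the set $\Ncal(X)$ consists of all integers of the right parity from $\Ncal(X)_{\min}$ up to $\deg X$, so it suffices to exhibit a transverse real configuration producing at most $\lfloor (n+2)/2 \rfloor$ real intersection points.

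The proposed construction uses the block pencil
\[
M_0(s, t, u) = \begin{pmatrix} sI_{n+1} + tJ \\ u\,\rho^\T \end{pmatrix},
\]
where $J$ is a real $(n+1) \times (n+1)$ matrix built from $2 \times 2$ skew-rotation blocks (and one extra $1 \times 1$ real block when $n+1$ is odd), chosen so that $-J$ has the minimum possible number $\epsilon \in \{0, 1\}$ of real eigenvalues, and $\rho \in \RR^{n+1}$ is generic. A local computation of the $(n+1) \times (n+1)$ minors shows that at $[0{:}0{:}1]$ the local ideal equals $(s, t)^n$, of length $\binom{n+1}{2}$, while on $\{u = 0\}$ the rank-drop scheme consists of $n+1$ simple points matching the eigenvalues of $-J$, precisely $\epsilon$ of which are real. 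A small generic real perturbation of $(A, B, C)$ splits the fat point at $[0{:}0{:}1]$ into $\binom{n+1}{2}$ simple complex points whose minimum possible real count is $\binom{n+1}{2} \bmod 2$ by the parity constraint. A case check across the four residues of $n \bmod 4$ verifies that $\epsilon + \bigl(\binom{n+1}{2} \bmod 2\bigr) \leq \lfloor (n+2)/2 \rfloor$, and Theorem~\ref{thm: N(X)}(iii) then delivers every integer in the claimed range.

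The main technical obstacle is the perturbation step: one must verify that some real deformation of $(A, B, C)$ actually attains the parity-minimum real count among the split-points of the fat point. This reduces to showing that the differential of the map from triples $(A, B, C)$ to local length-$\binom{n+1}{2}$ subschemes is surjective onto a real neighborhood in the relevant Hilbert scheme at $(s, t)^n$, so that the parity-minimum real stratum is reached.
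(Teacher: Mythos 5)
Your part (i) is correct and takes a cleaner route than the paper's. Encoding $X\cap W$ as the vanishing locus of $\det(sA+tB)$ and realizing an arbitrary real binary form (in general position so that $[1{:}0]$ is not a root) via $A=I$, $B=-(\text{companion matrix})$ handles both parities of $n$ uniformly, whereas the paper's Lemma~\ref{thm:seg 1,n} splits into $n$ odd (Lemma~\ref{lem: d_i all odd}) and $n$ even (an inductive trick appending the equation $y_nx_1=0$). Transversality at a simple root of $\det(sA+tB)$ follows by Jacobi's formula: $\tfrac{d}{ds}\det(sA+t_0B)\big|_{s=s_0} = c\,\mathbf{w}_0\T A\mathbf{x}_0$ for a left/right kernel pair, and its nonvanishing is precisely the nondegeneracy of the Jacobian of the $n+1$ equations $(sA+tB)\mathbf{y}=0$ in the $n+1$ local variables.

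Part (ii), however, has a genuine gap in the perturbation step, which you flag yourself but do not close. You construct a degenerate pencil $M_0$ whose rank-drop scheme is $(s,t)^n$ at $[0{:}0{:}1]$ together with $n+1$ simple points on $\{u=0\}$, and then assert that a small generic real perturbation of $(A,B,C)$ splits the fat point into $\binom{n+1}{2}$ simple points of which the parity-minimum number $\binom{n+1}{2}\bmod 2$ are real. A \emph{generic} perturbation does not achieve a minimum; you need to exhibit a \emph{specific} real perturbation direction attaining it. Your proposed reduction --- that the map from pencils to punctual subschemes is a submersion onto a real neighborhood of $(s,t)^n$ in the Hilbert scheme --- is unproven and nontrivial: the parameter counts match ($\dim$ pencils mod $\GL\times\GL$ equals $2\binom{n+2}{2}$), but equality of dimensions does not give submersivity at the highly non-generic $M_0$, whose rank-drop locus lies on a boundary stratum. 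Even granting surjectivity, you would still need to verify that the parity-minimum stratum of the real Hilbert scheme meets every neighborhood of $(s,t)^n$ (this is true, via rescaling a generic real configuration with $\ell\bmod 2$ real points, but it is an additional argument). The paper avoids all of this: its Lemma~\ref{lem:seg 2,n} instead glues a real system on $\SV_{(1,n)}(1,1)$ with $k$ real solutions to one on $\SV_{(2,n-1)}(1,1)$ with $\ell$ real solutions via a block structure with the equation $x_2y_n=0$, and uses the existence of a Zariski-dense $\GL(2)\times\GL(n)\times\GL(n+1)$-orbit on $\CC^2\otimes\CC^n\otimes\CC^{n+1}$ to reconcile the two sub-systems after perturbation, sidestepping any Hilbert-scheme deformation argument.
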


Another interesting family of varieties are those with $\Ncal(X)_{\max}=n$ for a $d$-dimensional variety $X\subseteq \PP_\CC^{N-1}$ with $n=N-d$. We say these varieties have $\Ncal(X)_{\max}$ minimal.
These varieties have probability 1 of $(X\cap W)_\RR=\{P_1,\ldots,P_n\}$ in Theorem \ref{thm: real trisecant lemma}$(b)$.
We characterize the plane curves $X$ with $\Ncal(X)_{\max}$ minimal. We also construct hypersurfaces $X$ with $\Ncal(X)_{\max}$ minimal for any dimension and even degree.

The rest of the paper is organized as follows.
We prove Proposition \ref{thm: N(X)}
in Section~\ref{sec:N(X)}. 
We prove Theorem \ref{thm: real trisecant lemma} in 
Section \ref{sec:real trisecant trichotomy}.
We prove Theorem \ref{thm: N(X) for segre-veronese} and Theorem \ref{thm: N(X) for small segre-veronese} in Section \ref{subsec: segre-veronese}.
We construct varieties with $\Ncal(X)_{\max}$ minimal
in Section \ref{subsec: N(X) max minimal}. We explore the applications of the real generalized trisecant trichotomy to independent component analysis, tensor decompositions, and the study of typical tensor ranks in Section \ref{sec: applications}.

\section{The possible numbers of real solutions}\label{sec:N(X)}
In this section, we prove Proposition \ref{thm: N(X)}, which studies 
 the possible numbers of real points that can be obtained after intersecting a variety with a sufficiently general complementary dimension linear space. 
Throughout this section, $X\subseteq \PP_\CC^{N-1}$ is a smooth real projective variety of dimension $d$ with a smooth real point.

\begin{proof}[Proof of Proposition \ref{thm: N(X)}(i,ii) and the $\subseteq$ part of (iii)]
Let $W$ be a real linear space of dimension $N-1-d$ that intersects $X$ transversely. 
The intersection $X\cap W$ is the vanishing locus of real polynomials so complex points appear in pairs. 
It contains $\deg X$ many points so $\#(X\cap W)_\RR \equiv \deg X \mod 2$. 
This proves (i). We also obtain, for (iii), that 
$$\Ncal(X)\subseteq \{\,k\,: \mathcal{N}(X)_{\min} \leq k \leq \mathcal{N}(X)_{\max},\,\, k\equiv \deg X \mod 2\,\}.$$
For (ii), the inequality $\Ncal(X)_{\max}\leq \deg X$ holds, since this is the number of complex intersection points.
For the inequality $\Ncal(X)_{\max} \geq N - d$, 
we construct a sufficiently general linear space $W$ of complementary dimension to $X$ that intersects $X$ in at least $N-d$ points.
Let $p$ be a real smooth point of $X$. 
Then the local dimension of $X_\RR$ at $p$ is $\dim X=d$ by \cite[Proposition 7.6.2]{bochnak2013real}, in other words, there is a semi-algebraic neighborhood $U$ of $p$ in $X_\RR$ of dimension equal to $\dim X=d$. 
The variety $X$ is non-degenerate, so $N-d$ generic points in $U$ are linearly independent. 
We denote the linear space they span by $W$. It has complementary dimension to $X$ in $\PP_\CC^{N-1}$ and $(X\cap W)_\RR$ contains at least $N-d$ points, since $W$ is generated by $N-d$ points in $X_\RR$.
\end{proof}

To prove Proposition \ref{thm: N(X)}, it remains to show that $$\Ncal(X)\supseteq \{\,k\,: \mathcal{N}(X)_{\min} \leq k \leq \mathcal{N}(X)_{\max},\,\, k\equiv \deg X \mod 2\,\}.$$
Our proof uses the following definition. 

\begin{definition}\label{def: U_k,Z_X}
We define $\Ucal_k\subseteq \Gr(N-d-1,N-1)_\RR$ to be the set of $(N-d-1)$-dimensional linear spaces in $\PP_\RR^{N-1}$ that intersect $X$ transversely in exactly $k$ real intersection points.
\end{definition}

We prove that 
the union of disjoint open chambers $\bigcup_{k\in \Ncal(X)} \Ucal_k$ 
is a dense open set in the Grassmannian $\Gr(N-d-1,N-1)_\RR$. Note that a set $\Ucal_k$ can be disconnected.
For two points, one generic in $\Ucal_{\Ncal(X)_{\min}}$ and the other generic in $\Ucal_{\Ncal(X)_{\max}}$, we show that we can travel from one to the other via a continuous path such that 
each time we travel from one chamber to another, we go from some $\Ucal_k$ to $\Ucal_{k+2}$ or to $\Ucal_{k-2}$. 
As we start in $\Ucal_{\Ncal(X)_{\min}}$ and end in $\Ucal_{\Ncal(X)_{\max}}$, every set $\Ucal_k$ for $k$ an integer with the same parity as $\deg X$ in the interval $[\Ncal(X)_{\min}, \Ncal(X)_{\max}]$ will be visited. 
See Figure \ref{fig:dual curve} for an example and also for the disconnectedness of the $\Ucal_k$.
We start by studying the topology of the sets $\Ucal_k$ in $\Gr(N-d-1,N-1)_\RR$.

\begin{lemma}\label{lem: topology for Grassmannian chambers}
\begin{enumerate}[(a)]
\item For $k\in \Ncal(X)$, the set $\Ucal_k$ is non-empty and open in $\Gr(N-d-1,N-1)_\RR$ with the Euclidean topology.
\item The set  $B:=\Gr(N-d-1,N-1)_\RR-\bigcup_{k\in \Ncal(X)} \Ucal_k$ is a hypersurface. It is the boundary of $\bigcup_{k\in \Ncal(X)} \Ucal_k$ and  contains linear spaces in $\PP_\RR^{N-1}$ that intersect $X$ at some point with multiplicity at least two or in some positive dimension variety.
\end{enumerate}
\end{lemma}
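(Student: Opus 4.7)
The plan is to prove (a) via the implicit function theorem and the openness of transversality, and to prove (b) by identifying $B$ with the real points of the coisotropic hypersurface of $X$---the higher-codimension generalization of the dual variety---then using the smooth real point of $X$ to verify that its real locus attains the expected codimension one.

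For (a), fix $W_0 \in \mathcal{U}_k$ and list $X \cap W_0$ as $\deg X$ distinct points, exactly $k$ of them real and the remaining ones occurring in complex conjugate pairs. In a local affine chart of the Grassmannian near $W_0$ and local coordinates on $X$ near each $p_i \in X \cap W_0$, the intersection is cut out by a real analytic system whose Jacobian in the $X$-coordinates is an isomorphism by transversality. The real implicit function theorem (cf.\ \cite{bochnak2013real}) then yields a unique real analytic germ $W \mapsto p_i(W)$ for each $i$, with $p_i(W)$ real whenever $p_i$ is real, since complex conjugation acts compatibly on the setup and fixes $p_i$. Since the total count $\deg X$ is constant on the transverse locus, no further intersection points can appear near $W_0$, and transversality is itself an open condition. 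Hence a neighborhood of $W_0$ in $\Gr(N-d-1,N-1)_\RR$ is contained in $\mathcal{U}_k$, which is therefore non-empty (by $k \in \mathcal{N}(X)$) and open.

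For (b), note that $W \in B$ means $X \cap W$ is either positive-dimensional or contains a point of multiplicity at least two; in either case there exists $p \in X \cap W$ with $W$ tangent to $X$ at $p$, equivalently $\dim(W \cap T_pX) \geq 1$. Consider the complex incidence variety
$$I \;=\; \bigl\{\,(W,p) \in \Gr(N-d-1,N-1)_\CC \times X : p \in W,\ \dim(W \cap T_pX) \geq 1 \,\bigr\}.$$
For fixed $p$, choosing first a projective line through $p$ inside $T_pX$ ($d-1$ parameters) and then an $(N-d-1)$-plane through that line ($(N-d-2)d$ parameters) gives $\dim I \leq d + (N-d-1)d - 1 = (N-d)d - 1$. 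Since $\dim \Gr(N-d-1,N-1)_\CC = (N-d)d$, the Zariski closure $\Sigma(X)$ of the image of $I$ in the Grassmannian is cut out by a single polynomial equation (the coisotropic hypersurface of $X$). Because $X$ is real, this equation has real coefficients, so $B \subseteq \Sigma(X) \cap \Gr(N-d-1,N-1)_\RR$ lies in the real zero locus of a real polynomial and has real codimension at least one.

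To upgrade this to a genuine real hypersurface, I use the smooth real point $p_0 \in X$. A neighborhood of $p_0$ in $X_\RR$ is a real manifold of dimension $d$, and $T_{p_0}X$ is a real linear subspace of $\PP_\RR^{N-1}$. Choose a real line $\ell \subset T_{p_0}X$ through $p_0$ and extend it to a real $(N-d-1)$-plane $W_0$, picked generically so that $X \cap W_0$ contains $p_0$ with multiplicity exactly two, $W_0$ is transverse elsewhere, and the tangency condition has nonzero derivative in $W$ at $(W_0,p_0)$. The real implicit function theorem then produces a smooth real hypersurface in $\Gr(N-d-1,N-1)_\RR$ through $W_0$ lying in $B$. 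Combined with the upper bound above, $B$ is a real hypersurface, and since $\bigcup_k \mathcal{U}_k$ is open, $B$ is its topological boundary. The main obstacle is exactly this last step: a real polynomial can have a lower-dimensional real zero locus (e.g.\ $x^2+y^2 = 0$), and the smooth-real-point hypothesis is precisely what rules this out and ensures that $B$ genuinely has real codimension one.
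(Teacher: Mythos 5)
Your part (a) is a more careful rendering of the same idea the paper uses: transverse intersection points vary continuously with $W$, conjugation-equivariance keeps real points real and keeps conjugate pairs as conjugate pairs, and the constant total count $\deg X$ rules out new intersection points appearing. That matches the paper's argument in substance, just phrased via the implicit function theorem rather than via G\"obner bases and continuity of roots. No objection there.

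Part (b) is where you diverge from the paper, and where there is a genuine gap. The paper simply identifies $B$ with the real zero set of the Hurwitz form and cites Sturmfels, \emph{The Hurwitz form of a projective variety}, Theorem~1.1, for the fact that this is an irreducible hypersurface in the Grassmannian. You instead try to re-derive the hypersurface property from a dimension count on the incidence variety $I$. The count itself is fine and gives $\dim I \leq (N-d)d - 1$, hence $\codim \Sigma(X) \geq 1$. But from this you conclude that ``$\Sigma(X)$ is cut out by a single polynomial equation,'' which is a non-sequitur: an upper bound on the dimension of the image only says the codimension is \emph{at least} one, not that it is exactly one. If the image had codimension $\geq 2$, it would not be a hypersurface, so you must also supply a matching lower bound. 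Your closing paragraph tries to do this by exhibiting a real $W_0$ tangent to $X$ at $p_0$ with ``nonzero derivative in $W$'' of the tangency condition, but that derivative condition is asserted, not verified, and you cannot get it for free by choosing $W_0$ ``generically'' --- that would presuppose that the bad locus has a dense set of smooth points of the right codimension, which is exactly what you are trying to prove. A clean way to close the gap (and the one implicitly used elsewhere in the paper) is to take a generic pencil $L$ of $(N-d-1)$-planes inside a generic $(N-d)$-plane $U$; by Bertini, $Y := U\cap X$ is a smooth non-degenerate curve, its dual $Y^\vee \subset (\PP^{N-d})^\ast$ is a genuine hypersurface, and the line $L$ therefore meets $Y^\vee$, i.e.\ meets $\Sigma(X)$. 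That shows $\codim\Sigma(X) \leq 1$ and, together with your upper bound, proves $\Sigma(X)$ is a hypersurface. Alternatively, citing the Hurwitz form result as the paper does avoids the issue entirely.

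One further remark: the paper only claims that $B$ lies in the real zero locus of the Hurwitz form (a complex hypersurface). Your extra step aiming to show that $B$ has \emph{real} codimension one is a stronger statement than the paper makes and is not needed for the later arguments; it is good instinct to worry about $x^2 + y^2 = 0$, but you should flag that this is an addition to the lemma as stated, and at present the argument for it is not complete.
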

\begin{proof}
Let $V$ be a codimension $d$ linear space in $\PP_\CC^{N-1}$ that intersects $X$ transversely in exactly $k$ real solutions.
Roots of a polynomial system change continuously as its coefficients change. 
So if we perturb $V$ in a small open neighborhood around it, all solutions to $V\cap X$ are distinct and the complex points in $V\cap X$ move to complex points.
By considering the Grobner bases of the ideal generated by polynomials defining $X$ and the linear relations defining $V$, the $i$-th coordinate of the solutions to $V\cap X$ are roots of some univariate polynomial with coefficients that change continuously as we move $V$. 
So, the real points in $V\cap X$ remain real.
Hence, there is an open neighborhood of $V$ in $\Ucal_k$, thus $\Ucal_k$ is open.

When we leave $\Ucal_k$ and enter $\Ucal_{k'}$ along some path in $\Gr(N-d-1,N-1)_\RR$, we must have at least two solutions coming together on the boundary of $\Ucal_k$ and $\Ucal_{k'}$. So, the set $$B:=\Gr(N-d-1,N-1)_\RR-\bigcup_{k\in \Ncal(X)}\Ucal_k$$ is the collection of linear spaces in $\PP_\RR^{N-1}$ of codimension $d$ that intersect $X$ singularly.
The set $B$ is the vanishing locus of the Hurwitz form 
for $X$ and it is an irreducible hypersurface since $X$ is irreducible, see \cite[Theorem 1.1]{sturmfels2017hurwitz}.
\end{proof}

We will travel from $\Ucal_{\Ncal(X)_{\min}}$ to $\Ucal_{\Ncal(X)_{\max}}$ via a connected sequence of lines in $\Gr(N-d-1,N-1)_\RR$. 
Lines in $\Gr(N-d-1,N-1)$ are pencils of linear spaces in $\PP_\CC^{N-1}$.
We use lines to form our path and will make the lines sufficiently generic to reduce to the case where $X$ is an algebraic curve.
We can then use the notion of dual varieties to understand hyperplanes that intersect an algebraic curve non-transversely.

\begin{lemma}
A line in $\Gr(N-d-1,N-1)$ is a pencil of linear spaces of dimension $N-d-1$ that contain a fixed $(N-d-2)$-space and are contained in a fixed $(N-d)$-space in $\PP_\CC^{N-1}$.
\end{lemma}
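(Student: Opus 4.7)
The plan is to use the Plücker embedding of the Grassmannian to reduce the statement to a classical characterization of lines in $\Gr(N-d-1, N-1)$ via decomposable multivectors. Set $V = \CC^N$, so that $(N-d-1)$-dimensional projective subspaces of $\PP_\CC^{N-1}$ correspond bijectively to $(N-d)$-dimensional vector subspaces of $V$. The Plücker embedding sends such a vector subspace $W = \Span(v_1, \ldots, v_{N-d})$ to the point $[v_1 \wedge \cdots \wedge v_{N-d}] \in \PP(\wedge^{N-d} V)$, realizing the Grassmannian as a projective variety. By definition, a line in $\Gr(N-d-1, N-1)$ is a projective line in $\PP(\wedge^{N-d} V)$ all of whose points lie in the Grassmannian, i.e., correspond to decomposable multivectors.

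The key ingredient is the following classical fact: for two distinct subspaces $W_0, W_1$ of dimension $N-d$ with corresponding decomposable multivectors $\omega_0, \omega_1$, every linear combination $s\omega_0 + t\omega_1$ is decomposable if and only if $\dim(W_0 \cap W_1) = N-d-1$. The sufficient direction is explicit: setting $A := W_0 \cap W_1$ and writing $W_0 = A \oplus \CC v_0$, $W_1 = A \oplus \CC v_1$, one has $\omega_0 = \alpha \wedge v_0$ and $\omega_1 = c\, \alpha \wedge v_1$ for a decomposable $\alpha \in \wedge^{N-d-1} V$ representing $A$ and a nonzero scalar $c$; hence
$$ s\omega_0 + t\omega_1 = \alpha \wedge (sv_0 + ct v_1), $$
which is decomposable and corresponds to $A \oplus \CC(sv_0 + ctv_1)$. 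The necessary direction follows from the Plücker quadratic relations and is classical; see e.g.\ \cite[Chapter 6]{harris2013algebraic}.

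To conclude, given a line $L \subset \Gr(N-d-1, N-1)$, pick any two distinct subspaces $W_0, W_1 \in L$. By the lemma above, $A := W_0 \cap W_1$ has linear dimension $N-d-1$ and $B := W_0 + W_1$ has linear dimension $N-d+1$. The family
$$ \Lambda := \{W : A \subset W \subset B,\ \dim W = N-d\} $$
is naturally identified with $\PP(B/A) \cong \PP^1$, and the sufficient direction of the key lemma shows that every $W \in \Lambda$ corresponds to a decomposable multivector on the projective line through $[\omega_0]$ and $[\omega_1]$. Hence $\Lambda$ is itself a line in the Grassmannian passing through $W_0$ and $W_1$, so $\Lambda = L$. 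Translating back to projective subspaces of $\PP_\CC^{N-1}$, the vector subspace $A$ corresponds to a projective subspace of dimension $N-d-2$ and $B$ to one of dimension $N-d$, which is exactly the pencil description asserted.

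The only nontrivial step is the converse direction of the key lemma, which requires the Plücker relations; but this is well-known and standard. Everything else is bookkeeping between the projective and linear conventions.
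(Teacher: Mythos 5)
Your proof is correct, and it takes a genuinely different route from the one in the paper. You work directly in the Plücker embedding and reduce everything to the classical fact that a pencil $s\omega_0 + t\omega_1$ of $(N-d)$-vectors is entirely decomposable if and only if the corresponding subspaces $W_0$, $W_1$ meet in a hyperplane of each. The sufficiency you verify by hand (the wedge calculation $\alpha\wedge(sv_0+ctv_1)$), and the necessity you correctly attribute to the Plücker relations; the final identification of your $\Lambda$ with $L$ is clean since two distinct points determine a unique projective line. The paper instead avoids the Plücker relations altogether: it forms the sweep $V_L = \bigcup_{[V]\in L} V$, uses the universal incidence correspondence $\Phi$ to show $V_L$ is an irreducible $(N-d)$-dimensional variety, and computes its degree to be $1$ by intersecting with a generic $d$-dimensional linear space and recognizing the relevant Schubert condition as a hyperplane section $\Sigma$ with $L\cap\Sigma$ a single point. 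It then uses that a line in $\Gr(N-d-1, N-d)$ is automatically a pencil through a common codimension-one subspace. Your argument is more explicit and tensor-algebraic and pins down the fixed $(N-d-2)$- and $(N-d)$-spaces concretely from $W_0, W_1$; the paper's argument is more in the spirit of the incidence-variety and degree-theoretic tools used elsewhere in the section and avoids invoking the (standard but not re-proved) converse direction of the decomposability criterion. Both are valid, and each is self-contained modulo a well-known fact -- yours on Plücker relations, the paper's on a degree-one non-degenerate variety being a linear space.
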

\begin{proof}
This is the description of a line in the Grassmannian as a Schubert cycle, which is well-known.  We include a proof for convenience.
Let $L$ be a line in $\Gr(N-d-1,N-1)$. 
Let $V_L= \bigcup_{[V]\in L} V\subseteq \PP_\CC^{N-1}$.  Since a line in $\Gr(N-d-1,N-d)$ is the pencil of $(N-d-1)$-spaces that contain a $(N-d-2)$-space, it is enough to show that $V_L$ is a $(N-d)$-space, i.e. a projective variety of dimension $N-d$ and degree one in the Pl\"ucker embedding of $\Gr(N-d-1,N-1)$.
Let $$\Phi= \{\,(V,p)\in \Gr(N-d-1,N-1)\times \PP_\CC^{N-1}:\, [V]\in \Gr(N-d-1,N-1),\,\, p\in V \,\}$$ and let $p_1$ and $p_2$ be the projection of $\Phi$ onto its first and second factor, respectively.
Then $V_L= p_2(p_1^{-1}(L))$ is a projective algebraic variety with dimension $N-d$ and it is irreducible since $p_1^{-1}(L)$ is irreducible. 
Let $W$ be a generic dimension $d$ linear space in $\PP_\CC^{N-1}$, then 
$$\deg (V_L)= \#( V_L\cap W ) = 
\# \{ V\in L: V\cap W \neq \emptyset \}.$$ 
Let $\Sigma$ be the set of points in $\Gr(N-d-1,N-1)$ whose corresponding linear spaces intersect $W$. It is a hyperplane section of $\Gr(N-d-1,N-1)$ in the Pl\"ucker embedding.
Hence $$\# \{ V\in L: V\cap W\neq \emptyset \}= \# (L\cap \Sigma)= 1$$ and $V_L$ has degree one.
\end{proof}

We recall the definition of a dual variety and some of its key properties.

\begin{definition}
Let $X\subseteq \PP_\CC^{N-1}$ be an irreducible smooth algebraic variety. 
The {\em dual variety} $X^\vee\subseteq (\PP_\CC^{N-1})^\ast$ is the collection of hyperplanes that intersect $X$ singularly.
In general, $X^\vee$ is a hypersurface, otherwise $X$ is ruled by projective spaces of dimension $\codim X^\vee-1$.
\end{definition}

\begin{proposition}\label{prop: bad locus small}
Let $Y\subseteq \PP_\CC^{N-1}$ be a non-degenerate smooth algebraic curve with $Y^\vee$ a hypersurface in $(\PP_\CC^{N-1})^\ast$. 
If $H$ is a hyperplane that intersects $X$ in at least two points with multiplicity two or at least one point with multiplicity at least three (including positive dimensional intersection), then $[H] \in \Sing(Y^\vee)$.
\end{proposition}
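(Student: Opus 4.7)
My plan is to use the conormal variety and biduality. Consider
$$CN(Y) = \overline{\{(p, H) \in Y \times (\PP_\CC^{N-1})^\ast : T_p Y \subseteq H\}}.$$
Since $Y$ is a smooth curve, $CN(Y)$ is a $\PP^{N-3}$-bundle over $Y$, and in particular is smooth of dimension $N-2$. Let $\pi : CN(Y) \to Y^\vee$ denote the second projection. Because $Y^\vee$ is a hypersurface by hypothesis, $\dim Y^\vee = N-2 = \dim CN(Y)$, so $\pi$ is a proper, generically finite morphism. Moreover, biduality (which identifies $CN(Y)$ with $CN(Y^\vee)$ under the swap) realizes $\pi$ as the ``first'' projection $CN(Y^\vee) \to Y^\vee$, which is a $\PP^0$-bundle over $Y^\vee_{sm}$. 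Hence $\pi$ is birational and its fiber over each smooth point of $Y^\vee$ is a single reduced point.

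If $H$ is tangent to $Y$ at two distinct points $p_1, p_2$ with multiplicity at least two, then $(p_1, H)$ and $(p_2, H)$ are two distinct preimages of $[H]$ under $\pi$; by the previous paragraph this forces $[H] \in \Sing(Y^\vee)$. The other case is that $H$ meets $Y$ at a single point $p$ with multiplicity at least three, so the set-theoretic fiber $\pi^{-1}([H]) = \{(p, H)\}$ is a singleton and a different argument is needed: I plan to show that the differential $d\pi_{(p, H)}$ has rank strictly less than $N-2$, and then to invoke Zariski's Main Theorem.

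Concretely, I will choose affine coordinates on $\PP_\CC^{N-1}$ so that $p$ is the origin and $T_pY$ is the $x_1$-axis, and parametrize $Y$ near $p$ as $t \mapsto (t, y_2(t), \ldots, y_{N-1}(t))$ with $y_i(0) = y_i'(0) = 0$. Local coordinates on $CN(Y)$ near $(p, H)$ can be taken as $(t, a_2, \ldots, a_{N-2})$ (normalizing $a_{N-1} = 1$), with $(a_0, a_1)$ determined by the two conditions $T_{p(t)} Y \subseteq \{\sum_i a_i x_i = 0\}$. A direct computation yields $a_0(t) = \sum_{i \geq 2} a_i(t y_i'(t) - y_i(t))$ and $a_1(t) = -\sum_{i \geq 2} a_i y_i'(t)$, from which the Jacobian of $\pi$ at $(0, a_2^0, \ldots, a_{N-2}^0)$ has maximal rank $N-2$ if and only if $\sum_{i \geq 2} a_i^0 y_i''(0) \neq 0$; this latter quantity is exactly the coefficient of $t^2$ in the local expansion of $H \cap Y$ at $p$, so it vanishes precisely when the contact order is $\geq 3$. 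Since $Y$ is irreducible and non-degenerate, $H \not\supseteq Y$, so the fiber of $\pi$ over $[H]$ is finite; together with properness, $\pi$ is finite in a neighborhood of $[H]$. If $Y^\vee$ were smooth (hence normal) at $[H]$, Zariski's Main Theorem would force the finite birational $\pi$ to be a local isomorphism at $(p, H)$, contradicting the rank drop of $d\pi$. Hence $[H] \in \Sing(Y^\vee)$. I expect the main obstacle to be the local computation of $d\pi$ and the identification of its rank drop with the higher-contact condition: this is a routine Taylor expansion, but one that demands careful bookkeeping of which functions of $(t, a_2, \ldots, a_{N-2})$ are independent coordinates on $CN(Y)$ and which are dependent.
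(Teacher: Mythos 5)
Your proof is correct, but it takes a genuinely different route from the paper. The paper's proof is a one-liner once you cite the Dimca--Milnor formula (see \cite[Theorem 10.8]{tevelev2006projective}): the multiplicity of $Y^\vee$ at $[H]$ equals $\sum_{p\in\Sing(Y\cap H)}\mu(Y\cap H,p)$, and for a smooth curve a contact point of order $m$ has Milnor number $m-1$, so either hypothesis forces $\mathrm{mult}_{[H]}Y^\vee\geq 2$ and hence $[H]\in\Sing(Y^\vee)$. You instead avoid the Milnor-number machinery entirely and argue directly with the conormal variety $CN(Y)$ and Zariski's Main Theorem: two tangency points give two fiber points of the finite birational map $\pi:CN(Y)\to Y^\vee$, which cannot happen over a normal (in particular smooth) point; a single contact point of order $\geq 3$ makes $d\pi$ drop rank at $(p,H)$, which again is incompatible, by ZMT, with $Y^\vee$ being normal at $[H]$. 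Your local Jacobian computation and the identification of the rank-drop condition $\sum_{i\geq 2}a_i^0\,y_i''(0)=0$ with contact order $\geq 3$ are both correct, and the finiteness of $\pi$ (quasi-finite because $Y$ is non-degenerate, proper because $CN(Y)$ is projective) is exactly what lets ZMT bite. The paper's argument is shorter and immediately gives the precise multiplicity of $Y^\vee$ at $[H]$; your argument is longer but more self-contained and elementary, and makes visibly concrete why the two hypotheses behave the same way (two distinct preimages versus one preimage with a degenerate differential). One small point to tighten: in your Case~2 you phrase it as ``$H$ meets $Y$ at a single point $p$'', but what you actually use (and all you need) is that $p$ is the unique \emph{tangency} point; transverse intersections elsewhere do not contribute to $\pi^{-1}([H])$ and do not affect the argument, and if there were a second tangency point you would already be in Case~1.
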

\begin{proof}
By a result of \cite{dimca1986milnor} (for a proof see \cite[Theorem 10.8]{tevelev2006projective}), the multiplicity of $Y^\vee$ at $[H]$ is equal to $\Sigma_{p\in \Sing(Y\cap H)}\, \mu(Y\cap H,p)$ where $\mu(Y\cap H,p)$ is the Milnor number. 
If $p$ is an intersection point with multiplicity $m$, then $\mu(Y\cap H,p)=m-1$. 
So, if $H$ intersects $Y$ in at least two points with multiplicity two or at least one point with multiplicity at least three (including positive dimensional intersection), we obtain 
$\Sigma_{p\in \Sing(Y\cap H)}\, \mu(Y\cap H,p)\geq 2$, so $[H]$ is a singular point in $Y^\vee$.
\end{proof}

\begin{lemma}\label{lem: generic line change by 2 each time}
Let $Y \subseteq \PP_\CC^{N-1}$ be a non-degenerate smooth algebraic curve defined by real polynomials with a smooth real point. 
A generic real line $L$ in $(\PP_\CC^{N-1})^\ast$ will intersect $Y^\vee$ transversely. 
Moreover, each time the line crosses $Y^\vee$, it travels from $\Ucal_k$ to $\Ucal_{k+2}$ or to $\Ucal_{k-2}$ for some $k\in \Ncal(Y)$.
\end{lemma}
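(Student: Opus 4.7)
The plan has three ingredients: that $Y^\vee$ is a hypersurface, that a generic real line $L$ avoids $\Sing(Y^\vee)$ and meets the smooth locus of $Y^\vee$ transversely, and that at each transverse real crossing the real intersection count jumps by exactly $\pm 2$.

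For the first claim, since $Y$ is a smooth non-degenerate curve it cannot be ruled by projective subspaces of positive dimension (the only curve ruled by lines is a line itself), so by the definition of the dual variety recalled above, $Y^\vee$ is a hypersurface. For the second, $\Sing(Y^\vee)$ has complex codimension at least two in $(\PP^{N-1}_\CC)^\ast$, so its real locus has real codimension at least two in $(\PP^{N-1}_\RR)^\ast$ and is avoided by a generic real line. Transversality of $L$ to the smooth part of $Y^\vee$ is a further Zariski-open condition on real lines in the real Grassmannian, and it is non-empty by standard Bertini-type arguments. Hence a generic real $L$ meets $Y^\vee$ only in finitely many smooth points, and the real ones are transverse crossings of the real locus $L_\RR$ with the real hypersurface $Y^\vee_\RR$.

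For the third ingredient, let $[H]$ be a real crossing of $L$ with $Y^\vee$. By Proposition~\ref{prop: bad locus small}, $Y\cap H$ consists of exactly one double point $p$ together with $\deg Y - 2$ simple points. The double point must be real: otherwise $\bar p$ would be a second double point of $Y \cap H$ and Proposition~\ref{prop: bad locus small} would place $[H] \in \Sing(Y^\vee)$, contrary to the choice of $L$. I then parametrize $L$ near $[H]$ by a real parameter $t$ with $L(0)=[H]$. The $\deg Y - 2$ simple real intersection points of $Y\cap H$ deform to nearby real intersection points of $Y$ with $L(t)$ for small $t$ by the implicit function theorem. Near $p$, choose a local analytic coordinate $s$ on $Y$; the intersections of $Y$ with $L(t)$ near $p$ are roots of a real-analytic quadratic $s^2 + a(t)s + b(t)$ with $a(0)=b(0)=0$, whose discriminant $\Delta(t) = a(t)^2 - 4b(t)$ cuts out $Y^\vee$ along $L$ near $t=0$. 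Transversality of $L$ with $Y^\vee$ at $[H]$ forces $\Delta$ to vanish to order one at $t=0$, so $\Delta$ changes sign across $0$: the two roots near $p$ are real on one side of the crossing and complex conjugate on the other, producing the jump of $\pm 2$ in the real count.

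The main obstacle is this last step: identifying the discriminant $\Delta(t)$ of the two roots near $p$ with a local defining equation of $Y^\vee$ along $L$, up to a nonvanishing factor, so that transversality of $L$ in the dual space translates into the order-one sign change of the real discriminant. This requires a careful local coordinate computation, or equivalently an appeal to the interpretation of $Y^\vee$ as the vanishing locus of the Hurwitz form as in \cite{sturmfels2017hurwitz}.
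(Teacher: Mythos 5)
Your three-ingredient plan matches the paper's strategy, and the first two ingredients are handled correctly (the paper actually keeps both the hypersurface and non-hypersurface cases for $Y^\vee$, but in the latter the lemma is vacuous, so your restriction to the hypersurface case is harmless). The third ingredient is where you and the paper diverge in phrasing but not in substance, and it is also where your argument has its one real gap, which you flag yourself.

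The paper argues the $\pm 2$ jump geometrically: it observes that each nearby hyperplane $H_t$ meets $Y$ near the double point $p$ in either a conjugate pair (case (i)) or two real points (case (ii)); it then rules out (i) occurring on both sides of $t=0$ (else $p$ would be an isolated real point of $Y$, an acnode) and (ii) occurring on both sides (else $Y$ would have a real node at $p$), both contradicting smoothness. You instead package the local picture in a quadratic $s^2+a(t)s+b(t)$ and reduce the jump to the sign change of its discriminant $\Delta(t)$, which requires knowing $\Delta$ vanishes to order exactly one at $t=0$. You correctly identify this as the crux and defer it to ``a careful local coordinate computation.'' That is the one substantive gap: you assert without proof that transversality of $L$ to $Y^\vee$ translates into order-one vanishing of the local discriminant, rather than just the reduced defining equation of $Y^\vee$ restricted to $L$. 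The computation is not hard (with $g(s,t)=\ell_{H_t}(Y(s))$ one has $g(s,0)=cs^2+O(s^3)$ with $c\neq 0$ because the tangency is exactly double, and $\partial_t g(0,0)\neq 0$ exactly because $p$ avoids the base $(N-3)$-plane of the pencil, which is equivalent to transversality of $L$ to $Y^\vee$ at $[H_0]$; then $\Delta(t)$ has a simple zero), but it must be carried out; it is precisely this same fact that the paper is encoding in its acnode/node dichotomy. Two smaller slips: you should invoke a Weierstrass-type preparation to justify replacing the full local equation by a monic quadratic in $s$, and ``the $\deg Y-2$ simple real intersection points'' should read ``simple intersection points,'' of which only the real ones persist as real --- the complex conjugate pairs among them stay complex, so they do not affect the real count either.
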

\begin{proof}
If $Y^\vee$ is a hypersurface in $(\PP_\CC^{N-1})^\ast$, $\Sing(Y^\vee)$ has codimension at least two in $(\PP_\CC^{N-1})^\ast$. 
If $Y^\vee$ is not a hypersurface in $(\PP_\CC^{N-1})^\ast$, 
then $\Sing(Y^\vee)$ has codimension at least three. 
A generic real line will intersect $Y^\vee$ transversely in both cases.

Any hyperplane corresponding to a point in $L$ intersects $Y$ in at most one point with multiplicity two, by Proposition \ref{prop: bad locus small}. 
So, each time $L$ crosses $Y^\vee$, two distinct points become one double point and then become two distinct points.
Since $L$ is real, the double point must be real otherwise there would be a pair of complex conjugate double points and we would have a bitangent. 

Suppose $L$ crosses $Y^\vee$ at some hyperplane $H_0$ where $Y\cap H_0$ has a double point $p$ and consider $H_t$ in $L$ for $t$ close to 0.  
First notice that the tangent line to $Y$ at $p$ lies in $H_0$.   
Since $Y$ is smooth at $p$, $H_t$ intersects $Y$ at some point $p_t$ close to $p$.  
There are two possibilities:
\begin{enumerate}[(i)]
    \item if $p_t$ is not real, its conjugate is also close to $p$. The secant line from $p_t$ to its conjugate is a real line in $H_t$. This secant line moves to the tangent line at $p$ as $t$ moves to $0$;
    \item if $p_t$ is real there is a real secant to $Y$ through $p_t$ that is contained in $H_t$ and moves to the tangent line at $p$ as $t$ moves to $0$.  
\end{enumerate}

Consider possibilities (i) and (ii) as $t$ changes sign at 0.  
If (i) occurs on both sides, $p$ is an acnode on $Y$, an isolated real singularity, against the smoothness assumption. 
If (ii) occurs on both sides, $Y$ has a real node at $p$, again a singularity against the assumption.  
So (i) and (ii) occur each on one side.
Since all other real intersections of $H_0$ with $Y$ are transversal, i.e. at smooth points, these real intersections will remain real and distinct for $t$ close to $0$. 
So the number of real intersection points of $H_t$ with $Y$ changes by two as $t$ passes by $0$.
\end{proof}
We use Bertini's Theorem, \cite[Theorem 17.16]{harris2013algebraic}, to reduce our problem to the smooth curve case in order to apply Lemma \ref{lem: generic line change by 2 each time}.
It states that for a smooth variety, a generic member of a linear system of divisors on X is smooth away from the base locus of the system. 
It extends to the following.

\begin{corollary}[of Bertini's Theorem] 
Let $X$ be a smooth variety. 
If the base locus of a linear system of divisors on $X$ is empty or a finite reduced set of points, then the general member of the linear system is smooth. 
\end{corollary}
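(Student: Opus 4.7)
The plan is to bootstrap from the standard Bertini theorem just quoted, which already guarantees smoothness of the general member of the linear system away from the base locus, and then to upgrade it by handling the base points separately. When the base locus is empty there is nothing to do, so I would assume it is a finite reduced set of points $p_1,\ldots,p_s$, and fix one such point $p=p_j$.

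I would pick a basis $f_0,\ldots,f_k$ for the local equations at $p$ of the members of the linear system, so that a general member has local equation $f_\lambda = \sum_i \lambda_i f_i$ at $p$. The reducedness of the scheme-theoretic base locus at $p$ says exactly that the ideal generated by $f_0,\ldots,f_k$ in the local ring $\mathcal{O}_{X,p}$ equals the maximal ideal $\mathfrak{m}_p$. In particular not every $f_i$ can lie in $\mathfrak{m}_p^2$, since otherwise the ideal they generate would be contained in $\mathfrak{m}_p^2 \subsetneq \mathfrak{m}_p$. The locus
\[
V_p \;=\; \{\,\lambda \,:\, f_\lambda \notin \mathfrak{m}_p^2\,\}
\]
is therefore the complement of a proper linear subspace — the kernel of the linear map $\lambda \mapsto f_\lambda \bmod \mathfrak{m}_p^2$ from the parameter space of the linear system into $\mathfrak{m}_p/\mathfrak{m}_p^2$ — and hence a nonempty Zariski-open subset of that parameter space. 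For any $\lambda \in V_p$, the differential $df_\lambda(p)$ is nonzero, so the corresponding divisor is smooth at $p$.

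Intersecting the finitely many open conditions $V_{p_1},\ldots,V_{p_s}$ with the open set already provided by Bertini's theorem produces a nonempty Zariski-open subset of the parameter space all of whose members are smooth on $X$. The main point to be careful about is the invocation of the reducedness hypothesis, which is precisely what forces some $f_i$ to have nonvanishing differential at $p$; without reducedness, the base scheme could hide nilpotent information at $p$ that drags every $f_i$ into $\mathfrak{m}_p^2$, making every member of the linear system singular at $p$.
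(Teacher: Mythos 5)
Your proof is correct and rests on the same key observation as the paper's: reducedness of the base scheme at a base point $p$ forces the local ideal to be all of $\mathfrak{m}_p$, so not every section lies in $\mathfrak{m}_p^2$ and the general member has nonvanishing differential at $p$. You spell this out via the linear map $\lambda \mapsto f_\lambda \bmod \mathfrak{m}_p^2$ and the open sets $V_p$, whereas the paper states the contrapositive (if every member were singular at $p$, the base scheme would contain the first-order neighborhood of $p$), but the two arguments are the same.
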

\begin{proof} 
It suffices to check for singularities at the basepoints.  If every divisor is singular at a base point $p\in X$, i.e. has multiplicity at least two at $p$, then the base locus contains the first order neighborhood of $p$.  This is against the hypothesis that the base locus is a reduced set of points, so the theorem holds.
\end{proof}

When we refer to Bertini's theorem, we include this corollary.
By a real line in $\Gr(N-d-1,N-1)_\RR$, we mean a line corresponding to a pencil of real $(N-d-1)$-spaces containing a real $(N-d-2)$-space in a real $(N-d)$ space.
We show that we can construct a sequence of real lines in $\Gr(N-d-1,N-1)_\RR$ connecting a point in $\Ucal_{\Ncal(X)_{\min}}$ to a point in $\Ucal_{\Ncal(X)_{\max}}$, where the $(N-d)$ space corresponding to each line intersects $X$ in a smooth curve.

For two general codimension $d$ linear spaces $W_1,W_2$ in $\PP_\CC^{N-1}$, the intersection is empty if $2d>N-1$ and has codimension $2d$ if $2d\leq N-1$.
We define $\ell$ to be $-1$ if $2d>N-1$ and $N-1-2d$ if $2d \leq N-1$. 
It is the dimension of $W_1\cap W_2$ ($-1$ when the intersection is empty).
Let $k=N-d-\ell-1$.
The span, $\Span(W_1,W_2)$, has dimension $2N-2-2d-\ell$, so each $W_i$ has codimension $k$ in this span.
This is also the codimension of $W_1\cap W_2$ in $W_i$ for $i=1,2$.

\begin{lemma}\label{lem: can travel via lines in Grassmannian} 
Fix $X\subseteq \PP_\CC^{N-1}$ of dimension $d$.
Suppose $V_0,V_k$ are two generic codimension $d$ real linear spaces in $\PP_\CC^{N-1}$. 
Then $(V_0\cap V_k)\cap X=\emptyset$, $\Span(V_0,V_k)\cap X$ is smooth with dimension $k$,
and there are real linear spaces $V_1,\ldots,V_{k-1}$ of codimension $d$ and real linear spaces $U_0,\ldots,U_{k-1}$ in $\Span(V_0,V_k)$ of codimension $d-1$
satisfying the following properties:
\begin{enumerate}[(i)]
    \item $U_i=\Span(V_i,V_{i+1})$ and $\codim_{V_i}(V_i\cap V_{i+1})=1$ for $i=0,\ldots,k-1$;
    \item $V_{i+1} \cap V_k\supseteq V_i\cap V_k$ with $\dim({V_{i+1} \cap V_k})=\dim({V_i \cap V_k})+1$ and $V_{i+1}\cap V_0\subseteq V_i\cap V_0$ with $\dim({V_{i+1} \cap V_0})=\dim({V_i \cap V_0})-1$ for $i=0,\ldots,k-1$;
    \item $(V_i\cap V_k)\cap X=\emptyset$ and $\Span(V_i,V_k)\cap X$ is smooth with dimension $k-i$ for $i=0,\ldots,k$;
    \item $V_i$ intersects $X$ transversely for $i=1,\ldots,k-1$ and $U_i\cap X$ is a smooth curve for $i=0,\ldots,k-1$.
\end{enumerate}
\end{lemma}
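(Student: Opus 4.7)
The plan is to interpolate a generic real flag of linear subspaces between $V_0$ and $V_k$ inside $\Span(V_0,V_k)$, and then verify each genericity condition by Bertini's theorem (together with the corollary of it stated just before the lemma) applied to the appropriate family of linear sections of $X$.

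First I would dispose of the preliminary claims. Since $V_0$ and $V_k$ are generic real codimension-$d$ subspaces, their intersection $V_0\cap V_k$ is a generic real subspace of projective dimension $\ell = N-1-d-k$. When $k\geq 1$ we have $\ell<\codim X$, so $(V_0\cap V_k)\cap X=\emptyset$ by a dimension count. Similarly $\Span(V_0,V_k)$ is a generic real subspace of codimension $d-k$ in $\PP^{N-1}_\CC$ (or the full ambient space when $\ell=-1$), and Bertini's theorem gives that $\Span(V_0,V_k)\cap X$ is smooth of dimension $k$.

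Next I would build the chain. Lifting to vector representatives, let $L$ correspond to $V_0\cap V_k$ (setting $L=0$ when $\ell=-1$), and choose generic real complements giving $\widetilde V_0 = L\oplus M_0$ and $\widetilde V_k = L\oplus M_k$ with $\dim_\RR M_0 = \dim_\RR M_k = k$ and $M_0\cap M_k = 0$. Fix generic real bases $e_1,\dots,e_k$ of $M_0$ and $f_1,\dots,f_k$ of $M_k$, and for each $i=0,\dots,k$ set
\[
\widetilde V_i \;:=\; L\oplus \Span(f_1,\dots,f_i)\oplus \Span(e_{i+1},\dots,e_k),
\]
with projectivization $V_i$, and $U_i:=\Span(V_i,V_{i+1})$ corresponding to $L\oplus \Span(f_1,\dots,f_{i+1})\oplus \Span(e_{i+1},\dots,e_k)$. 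A direct vector-space computation then verifies properties (i) and (ii): $\widetilde V_i\cap \widetilde V_{i+1}$ has vector codimension one in $\widetilde V_i$, and the intersections $\widetilde V_i\cap \widetilde V_0$, $\widetilde V_i\cap \widetilde V_k$ shift by one dimension per step exactly as claimed.

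Finally I would verify (iii) and (iv) via Bertini. As the tuple $(e_1,\dots,e_k;f_1,\dots,f_k)$ varies in a Zariski-open subset of $M_0^{\,k}\times M_k^{\,k}$, each $V_i$ and each $U_i$ sweeps out a dense real subfamily of an irreducible subvariety of the Grassmannian of codimension-$d$, respectively codimension-$(d-1)$, subspaces of $\Span(V_0,V_k)$ containing $L$; each such family has base locus contained in the smooth variety $\Span(V_0,V_k)\cap X$, and its universal incidence dominates $X$, so Bertini (together with the corollary stated in the excerpt) yields that for generic real basis choices every $V_i$ meets $X$ transversely and every $U_i\cap X$ is a smooth one-dimensional subvariety. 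The smoothness of $\Span(V_i,V_k)\cap X$ of dimension $k-i$ then follows by Bertini applied inside the smooth variety $\Span(V_0,V_k)\cap X$, and $(V_i\cap V_k)\cap X=\emptyset$ for $0\leq i<k$ follows because $V_i\cap V_k = L\oplus \Span(f_1,\dots,f_i)$ has positive codimension in $V_k$ and is generic, hence avoids the finite set $V_k\cap X$. The main obstacle is that the $V_i$ are linked by the flag structure, so one cannot invoke Bertini on them as independent spaces; the resolution is that each of the finitely many genericity conditions defines a proper Zariski-closed subset of the real parameter space $M_0^{\,k}\times M_k^{\,k}$, and so their union is still proper, leaving a dense real open locus of basis choices for which the entire construction works.
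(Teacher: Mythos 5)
Your construction handles the combinatorial parts (i) and (ii) cleanly: the explicit split flag $\widetilde V_i = L\oplus \Span(f_1,\dots,f_i)\oplus \Span(e_{i+1},\dots,e_k)$ makes the nesting and dimension counts transparent, arguably more so than the paper's iterative construction (which never explicitly verifies the second half of (ii)). But there is a genuine gap in your verification of (iii) and (iv).

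The issue is that the families you vary over are not linear systems, so the Bertini statement you cite (and its corollary in the paper) does not apply to them. For fixed $L$, $M_0$, $M_k$, the $V_i$'s you build sweep out only the \emph{split} codimension-$d$ subspaces $L\oplus A\oplus B$ with $A\subseteq M_k$, $B\subseteq M_0$; this is a product of two Grassmannians, not a projective space, and is a proper subvariety of the full family of codimension-$d$ subspaces of $\Span(V_0,V_k)$ containing $L$. Neither the classical Bertini theorem for linear systems nor Kleiman--Bertini applies: the natural group $\GL(M_0)\times \GL(M_k)$ preserving your construction does \emph{not} act transitively on $\PP(L\oplus M_0\oplus M_k)$ (its orbits are stratified by whether the $L$-, $M_0$-, $M_k$-components vanish), so ``generic translate is transverse'' is not available. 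You acknowledge the obstacle and propose to resolve it by noting that each failure locus is Zariski-closed in the parameter space $M_0^k\times M_k^k$; but for this to finish the proof you must show each such closed subset is \emph{proper}, i.e.\ exhibit at least one parameter value satisfying the condition, and that is exactly the content you were trying to obtain from Bertini. As written, the argument is circular at this step.

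The paper sidesteps this by constructing the chain \emph{iteratively}: at each step, $U_i$ is a generic hyperplane of $\Span(V_i,V_k)$ containing $V_i$, and $V_{i+1}$ a generic hyperplane of $U_i$ containing $U_i\cap V_k$. Each of these is an honest linear system (hyperplanes through a fixed subspace), so Bertini and its corollary apply directly, and the smoothness/transversality needed to set up the next step is available before that step is taken. To rescue your single-shot split-flag construction, you would need either a direct incidence-variety dimension count showing that non-transverse split $V_i$'s form a proper subvariety of $\Gr(i,M_k)\times\Gr(k-i,M_0)$ — using that $V_0,V_k$ are generic so $Y=X\cap\Span(V_0,V_k)$ sits in general position relative to $L\oplus M_0\oplus M_k$ — or to first run the iterative argument and then observe it lands in your parameter family, neither of which is currently supplied.
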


\begin{proof}
For generic linear spaces $V_0$ and $V_k$, the linear space $V_0\cap V_k$ is generic with dimension $\ell$ or is empty and $\Span(V_0,V_k)$ is generic with dimension $N-d-1+k$. 
So, $(V_0\cap V_k)\cap X=\emptyset$ and $\Span(V_0,V_k)\cap X$ is smooth with dimension $k$ by the smoothness of $X$.
When $k=1$, there is nothing to prove.

When $k=2$, we choose $U_0$ to be a generic linear space of dimension $N-d$ in $\Span(V_0,V_2)$ containing $V_0$. 
By Bertini's Theorem, $U_0\cap X$ is a smooth curve since $\Span(V_0,V_2)\cap X$ is smooth and the base locus $V_0\cap X$ has dimension 0.
We choose $U_1$ to be a linear space of dimension $N-d$ in $\Span(V_0,V_2)$ containing $V_2$ such that, again 
by Bertini's Theorem, the intersections $U_1\cap X$  and $U_0\cap U_1\cap X$ are both smooth.
We define $V_1=U_0\cap U_1$.  
Then $U_1\cap X$ is a smooth curve and $V_1\cap X$ is smooth and finite. 

Now suppose $k>2$. 
We construct $V_1,\ldots,V_{k-1}$ and $U_0,\ldots,U_{k-1}$ inductively.
Suppose we have already constructed $V_1,\ldots,V_i$ and $U_0,\ldots,U_{i-1}$.
Suppose first that $k-i>2$. 
Note that $\Span(V_i,V_k)\cap X$ smooth and $(V_i\cap V_k)\cap X=\emptyset$.
We choose $U_i$ to be a generic linear space of dimension $N-d$ in $\Span(V_i,V_k)$ containing $V_i$ such that by Bertini's Theorem, $U_i\cap X$ is smooth and $(U_i\cap V_k)\cap X$ is empty.
We define $V_{i+1}$ to be a generic $N-d-1$ dimensional linear space in $U_i$ containing $U_i\cap V_k$ such that by Bertini's Theorem, $V_{i+1}\cap X$ is transverse and $\Span(V_{i+1}, V_k)\cap X$ is smooth.
Note that we have $U_i\cap V_k= V_{i+1}\cap V_k$ because $\codim_{V_{i+1}}(V_i\cap V_{i+1})=1$, $U_i\cap V_k$ contains $V_i\cap V_k$ and $\dim(U_i\cap V_k)=\dim (V_i\cap V_k)+1$.
So, in particular $(V_{i+1}\cap V_k)\cap X=\emptyset$. 
If $k-i=2$, we use the same argument as $k=2$ above.
\end{proof} 

Now, we prove the $\supseteq$ half of Proposition \ref{thm: N(X)}(iii); i.e., we show that $$\Ncal(X)\supseteq \{\,k:\, \mathcal{N}(X)_{\min} \leq k \leq \mathcal{N}(X)_{\max},\,\, k\equiv \deg X \mod 2\,\}.$$

\begin{proof}[Proof of the $\supseteq$ half of Proposition \ref{thm: N(X)}(iii)]

By Lemma \ref{lem: topology for Grassmannian chambers}(a), there are nonempty open sets $\Ucal_j\subseteq \Gr(N-d-1,N-1)_\RR$ for all $j\in \Ncal(X)$ such that any element in $\Ucal_j$ intersects $X$ in precisely $j$ real distinct points.
We take $V_0, V_k$ to be generic points in $\Ucal_{\Ncal(X)_{\min}},\Ucal_{\Ncal(X)_{\max}}$ respectively. 
By Lemma \ref{lem: can travel via lines in Grassmannian}, there are $N-d-1$ dimensional linear spaces $V_1,\ldots,V_{k-1}$ that intersect $X$ transversely and $N-d$ dimensional linear spaces $U_0,\ldots,U_{k-1}$ that intersect $X$ in a smooth curve with $V_{i},V_{i+1}\subseteq U_i$ for $i=0,\ldots, k-1$.
It is enough to show that inside each $U_i$, we can travel from $V_i$ to $V_{i+1}$ via a continuous path in $\Gr(N-d-1,N-1)_\RR$ and each time we leave an open chamber $\Ucal_k$, we enter either $\Ucal_{k-2}$ or $\Ucal_{k+2}$.
We denote $X_i= X\cap U_i$ and we treat $U_i\cong \PP_\CC^{N-d}$ as our ambient space. 
Now, $V_i,V_{i+1}$ are real points in $(\PP_\CC^{N-d})^\ast$.
Since $V_i,V_{i+1}$ intersect $X$ transversely, there are open neighborhoods $D_i,D_{i+1}\subset (\PP_\RR^{N-d})^\ast$ around $[V_i]$ and $[V_{i+1}]$ respectively, such that $D_j$ is contained in the connected component of $[V_j]$ in $(\PP_\RR^{N-d})^\ast - (X_i)^\vee$ for $j=\{i,i+1\}$.
We pick generic points $W_i\in D_i, W_{i+1}\in D_{i+1}$, then the line segments $(V_i,W_i), (V_{i+1}, W_{i+1})$ don't cross $(X_i)^\vee$.
The line $(W_i,W_{i+1})$ is a general line in $(\PP^{N-d})^\ast$, i.e. has transverse intersection with $Y^\vee$. If $X_i$ has a real smooth point, by Lemma \ref{lem: generic line change by 2 each time}, each time the line crosses $Y^\vee$, it travels from some $\Ucal_k$ to $\Ucal_{k+2}$ or $\Ucal_{k-2}$ for some $k\in \Ncal(Y)$.
If $X_i$ does not have a real smooth point, then $(X_i)_\RR$ consists of a finite number of singular points and a generic line does not intersect it.
So, the lines connecting $W_i$ to $W_{i+1}$ stay in $\Ucal_0$.
\end{proof}

We obtain the following corollary from the proof of Proposition \ref{thm: N(X)}(iii).

\begin{definition} 
Let $\Zcal_X\subseteq \Gr(N-d-1,N-1)_\RR$ be the set of $(N-d-1)$-spaces in $\PP_\RR^{N-1}$ that intersect $X$ in at least two points with multiplicity two or one point with multiplicity at least three (including positive dimensional intersection).
\end{definition}
 
\begin{corollary}
The set $\Gr(N-d-1,N-1)_\RR-\Zcal_X$ is path-connected. 
If $\Ucal_i,\Ucal_j$ are smoothly adjacent, meaning that $\overline{\Ucal_i}\cap \overline{\Ucal_j}$ contains some smooth point of the boundary $B:=\Gr(N-d-1,N-1)_\RR-\bigcup_{k\in \Ncal(X)} \Ucal_k$, then $i=j+2$ or $i=j-2$.
\end{corollary}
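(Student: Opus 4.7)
The plan is to deduce both assertions from the machinery developed in the proof of Theorem \ref{thm: N(X)}(iii), notably Lemmas \ref{lem: topology for Grassmannian chambers}, \ref{lem: generic line change by 2 each time} and \ref{lem: can travel via lines in Grassmannian}, together with Proposition \ref{prop: bad locus small}.

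I would first handle the adjacency claim because it is the more local statement. Fix a smooth point $p \in \overline{\Ucal_i}\cap\overline{\Ucal_j}$ of the irreducible hypersurface $B$. Since $B$ is smooth of codimension one at $p$, a short real arc $\gamma$ through $p$ transverse to $B$ has $\gamma\setminus\{p\}$ contained in $\Ucal_i$ on one side and $\Ucal_j$ on the other. To compute the change in $\#(V\cap X)_\RR$ as $V$ traverses $\gamma$ through $p$, I would embed $\gamma$ into a real line $L$ of the Grassmannian whose associated 2-plane $U\subseteq \PP_\CC^{N-1}$ (in the sense of Lemma \ref{lem: can travel via lines in Grassmannian}) cuts $X$ in a smooth real curve $Y=X\cap U$; Bertini, exactly as in the inductive construction of Lemma \ref{lem: can travel via lines in Grassmannian}, guarantees such $L$ exists while keeping $L$ transverse to $Y^\vee$ at $p$. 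Then $p$ is a smooth point of $Y^\vee\cap L$ and Lemma \ref{lem: generic line change by 2 each time} applies verbatim: crossing $Y^\vee$ transversely at a smooth real point changes the real intersection count by exactly $\pm 2$. This forces $|i-j|=2$.

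For path-connectedness of $\Gr(N-d-1,N-1)_\RR - \Zcal_X$, the key step is the codimension bound $\codim_\RR \Zcal_X \geq 2$. By Lemma \ref{lem: topology for Grassmannian chambers}(b), $B$ is an irreducible hypersurface, so its singular locus has complex codimension at least two in the complex Grassmannian and therefore real codimension at least two in the real Grassmannian. To conclude I would show $\Zcal_X\subseteq \Sing(B)$: through any $V\in \Zcal_X$ pick a generic real line $L$ in the Grassmannian whose associated 2-plane $U$ satisfies $X\cap U$ a smooth curve (Bertini again), and apply Proposition \ref{prop: bad locus small} to the smooth curve $X\cap U$ to conclude that $L\cap B$ is singular at $V$; varying $L$ over a positive-dimensional family through $V$ forces $V\in\Sing(B)$. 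Once $\Zcal_X$ has real codimension at least two in the connected real manifold $\Gr(N-d-1,N-1)_\RR$, its complement is path-connected by standard real algebraic topology.

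The main technical point is verifying $\Zcal_X\subseteq \Sing(B)$ in arbitrary codimension: Proposition \ref{prop: bad locus small} is stated only for curves, so the extension goes through cutting by generic 2-planes via Bertini, while carefully checking that, near $V$, the Hurwitz hypersurface of $X$ restricted to such a real line in the Grassmannian coincides with the dual hypersurface of the sliced curve, so that the curve singularity contributions of \cite{dimca1986milnor} add to at least two and certify singularity of $B$ itself.
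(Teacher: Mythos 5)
Your proposal reproduces the key mechanism — slicing by a real pencil whose associated $(N-d)$-plane $U$ meets $X$ in a smooth curve $Y$, then invoking Lemma \ref{lem: generic line change by 2 each time} and Proposition \ref{prop: bad locus small} — which is exactly what the paper's proof of Theorem~\ref{thm: N(X)}(iii) leaves implicit here. The adjacency argument is correct in outline. For path-connectedness you diverge from the paper's implicit route: the paper effectively obtains connectedness from the explicit line-chain construction (any two points of the open set $\Gr(N-d-1,N-1)_\RR - \Zcal_X$ can first be perturbed to generic linear spaces, then joined by the chain of Lemma~\ref{lem: can travel via lines in Grassmannian} together with the perturbed line-segments, all of which avoid $\Zcal_X$ by construction), whereas you aim to prove the stronger intrinsic fact $\codim_\RR \Zcal_X \geq 2$ via $\Zcal_X \subseteq \Sing(B)$. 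Both can be made to work, and yours gives a sharper structural statement.

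There are, however, a few gaps you should close. First, a small but confusing slip: you repeatedly call $U$ a ``2-plane''; it is an $(N-d)$-plane. Second, in both parts of the argument you apply Bertini with a \emph{fixed, non-generic} centre $V$ (either the smooth boundary point $p$ or the point $V\in\Zcal_X$), so the base locus $V\cap X$ is non-reduced. The paper's Corollary of Bertini explicitly requires a reduced base locus, so you cannot invoke it verbatim; you need a short local computation at the tangency point(s) $q$ showing $T_q X \cap U$ is one-dimensional for generic $U\supseteq V$ (equivalently, $U/V$ avoids the hyperplane $(T_qX+V)/V$ of $\PP^{d-1}$), and then Bertini away from the base locus. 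Third, and most seriously, the inclusion $\Zcal_X \subseteq \Sing(B)$ as you argue it breaks down for the positive-dimensional case allowed in the definition of $\Zcal_X$: if $V\cap X$ contains a curve $C$, then $C\subseteq V\subseteq U$ forces $C\subseteq Y=X\cap U$, so $Y$ cannot be an irreducible smooth non-degenerate curve in $U$ (indeed $Y\supseteq C$ would lie in the hyperplane $V$ of $U$), and Proposition~\ref{prop: bad locus small} no longer applies. You need to either treat that stratum of $\Zcal_X$ separately (it has even higher codimension, which can be bounded directly) or replace the argument in that case. Finally, the passage ``varying $L$ over a positive-dimensional family forces $V\in\Sing(B)$'' needs the family of lines to be Zariski-dense among lines through $V$, and the identification of the divisor $B\cap L$ with $Y^\vee\cap L$ (not just as sets) deserves a sentence; both are true and follow from the observation that for $W\subseteq U$ a hyperplane and $q\in Y$, one has $W\cap T_qX = T_qX\cap U = T_qY$ whenever $W$ is non-transverse to $X$ at $q$, so non-transversality to $X$ and to $Y$ coincide, but you should say so.
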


\section{Real trisecant trichotomy}\label{sec:real trisecant trichotomy}
In this section, we prove Theorem \ref{thm: real trisecant lemma}. 
Throughout the section, we suppose $X\subseteq \PP_\CC^{N-1}$ is a smooth real projective variety of dimension $d$ with a smooth real point  
and that $P_1,\ldots,P_n$ are points on $X_\RR$, sampled randomly from a strictly positive probability measure on $(X_\RR)^n$.

\begin{proposition}
Let $P_1,\ldots,P_n$ be points on $X_\RR$ sampled randomly from a strictly positive probability measure on $(X_\RR)^n$.
When $n+d<N$, $(X\cap W)_\RR=\{P_1,\ldots,P_n\}$ with probability 1.
\end{proposition}
\begin{proof}
It suffices to show that for general points $P_1,\ldots,P_n\in X_\RR$, we have $(X\cap W)_\RR=\{P_1,\ldots,P_n\}$.
General real points are also general complex points since $X$ has a smooth real point and $X_\RR$ is Zariski-dense in $X$. 
By the generalized trisecant lemma (see Theorem~\ref{lemma:trisecant}) we have $X\cap W=\{P_1,\ldots,P_n\}$ and hence
$(X\cap W)_\RR=\{P_1,\ldots,P_n\}$.
\end{proof}

\begin{lemma}
Let $W$ be a generic linear space with complementary dimension to $X$ in $\PP_\CC^{N-1}$. 
Then any subset of $\dim W+1$ intersection points is linearly independent.
\end{lemma}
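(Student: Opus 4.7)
The plan is to reduce to the generalized trisecant lemma (Theorem \ref{lemma:trisecant}) applied with $n = N - d - 1$, for which $n + d = N - 1 < N$. A generic complementary-dimensional $W$ meets $X$ transversely in $\deg X$ distinct points, and since $\dim W = N-d-1$ we have $\dim W + 1 = N-d$; the desired conclusion is that any $N-d$ of these intersection points span all of $W$.

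To transfer genericity from $\Gr(N-d-1, N-1)$ to $X^{N-d-1}$, I would introduce the incidence
\[
I = \bigl\{(W, P_1, \ldots, P_{N-d-1}) : P_i \in W \cap X \text{ pairwise distinct}\bigr\} \subset \Gr(N-d-1, N-1) \times X^{N-d-1}.
\]
The projection $I \to \Gr(N-d-1, N-1)$ has finite generic fibers (ordered $(N-d-1)$-subsets of the $\deg X$ intersection points), so $\dim I = (N-d)d$. The projection $\pi \colon I \to X^{N-d-1}$ has fiber over a linearly independent tuple equal to $\{W : W \supset \Span(P_i)\}$, a $\PP^d$ of dimension $d$, matching $\dim I = \dim X^{N-d-1} + d$. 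Hence for any proper closed subset $Z \subset X^{N-d-1}$, we have $\dim \pi^{-1}(Z) \leq \dim Z + d < (N-d)d$, so its image in the Grassmannian is a proper closed subset. Thus for generic $W$, every $(N-d-1)$-subset of $W \cap X$ avoids $Z$.

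Applying this with $Z$ the proper closed locus where Theorem \ref{lemma:trisecant} fails, unioned with the tuples that fail to span an $(N-d-2)$-dimensional space (a proper subset by non-degeneracy of $X$), I conclude that for generic $W$ and any choice of $N-d-1$ intersection points $P_1, \ldots, P_{N-d-1} \in W \cap X$, the span $\Span(P_1, \ldots, P_{N-d-1})$ has dimension $N-d-2$ and meets $X$ in exactly $\{P_1, \ldots, P_{N-d-1}\}$. Now take any $N-d$ distinct intersection points $P_1, \ldots, P_{N-d} \in W \cap X$. If they were linearly dependent, then $P_{N-d} \in \Span(P_1, \ldots, P_{N-d-1}) \cap X = \{P_1, \ldots, P_{N-d-1}\}$, contradicting distinctness. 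Hence any such subset is linearly independent.

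The main obstacle is the \emph{genericity transfer}: the intersection points $W \cap X$ are not independent samples from $X$ but are constrained to lie in $W$, so Theorem \ref{lemma:trisecant} does not apply on the nose. The dimension count on $I$ handles this cleanly, exploiting that $\dim \Gr(N-d-1,N-1) = \dim X^{N-d-1} + d$ exactly, so proper closed subsets of $X^{N-d-1}$ pull back to proper closed subsets of the Grassmannian.
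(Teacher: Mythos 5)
Your overall strategy -- an incidence variety between linear spaces and tuples of intersection points, followed by a dimension count using the generalized trisecant lemma -- is in the same spirit as the paper's proof (which sets up a contradiction via the bad loci $Y_k$ of $k$-planes spanned by $k+1$ points of $X$ that hit $X$ in extra points). But your argument has a genuine gap at the step ``for any proper closed subset $Z \subset X^{N-d-1}$, we have $\dim \pi^{-1}(Z) \leq \dim Z + d$.'' This inequality requires that \emph{every} fiber of $\pi$ over $Z$ have dimension at most $d$, but you only established that the fiber is $\PP^d$ over \emph{linearly independent} tuples. Over a tuple spanning a space of dimension $m < N-d-2$, the fiber is the Schubert cycle of $(N-d-1)$-spaces containing that $m$-space, which has dimension $(N-d-1-m)\,d > d$. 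Since you explicitly include the dependent tuples in $Z$, the bound does not follow from what you proved.

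Concretely, take $X$ a surface in $\PP^5$, so $N-d-1=3$: the fiber over three collinear intersection points is a $\Gr(1,3)$ of dimension $4$, not $d=2$. The conclusion $\dim \pi^{-1}(Z) < \dim\Gr$ is still true, but establishing it requires stratifying $Z$ by the span dimension $m$ of the tuple and bounding each stratum via the trisecant lemma: a tuple spanning an $m$-space (with $m < N-d-2$) forces some $(m+1)$-subtuple to lie in the bad locus $Y_m$, and $\dim Y_m < (m+1)d$ by the trisecant lemma, which exactly cancels the fiber jump since $(m+1)d + (N-d-1-m)d = (N-d)d$. This stratified count is essentially the paper's argument, which stays on the Grassmannian side (parametrizing $W$ as the span of a $Y_k$-element and $n-1-k$ additional points of $X$) rather than introducing the incidence variety $I$, and thereby avoids the fiber-dimension subtlety altogether.
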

\begin{proof}
Denote $\dim W=n-1$. The intersection $X\cap W$ is non-degenerate, by \cite[Proposition 18.10]{harris2013algebraic}, so $W$ is spanned by a subset of $n$ intersection points.
Assume for contradiction that, for a generic linear space $W$ of dimension $n-1$, there is a linearly dependent subset of $n$ intersection points. 
This linearly dependent set of intersection points spans a linear space 
$V_W$ of dimension at most $k\leq n-2$.
Let $Y_k$ be the collection of $k$ dimensional linear spaces in $\PP^{N-1}$ spanned by $k+1$ points in $X$ such that its intersection with $X$ contains more than these $k+1$ points.
By the generalized trisecant lemma, $Y_k$ has positive codimension in the image of the map $\phi: X^{k+1} \dashrightarrow \Gr(k,N-1)$ which sends $k+1$ points on $X$ to the linear space they span.
So, $\dim Y_k< (k+1)\dim X$.
By the hypothesis, a generic linear space in $\Gr(n-1,N-1)$ is spanned by a linear space in $Y_k$ for some $k\leq n-2$ and $n-1-k$ other points in $X$, so $$\dim\Gr(n-1,N-1)\leq \max_{k\leq n-2}\{\dim Y_k + (n-1-k) \dim X\}.$$
But we have $\dim Y_k + (n-1-k) \dim X< (k+1)\dim X+ (n-1-k) \dim X = (N-n)n =\Gr(n-1, N-1)$, a contradiction.
\end{proof}

\begin{proposition}\label{prop: complementary dimension case}
Let $P_1,\ldots,P_n$ be points on $X_\RR$, sampled randomly from a strictly positive probability measure on $(X_\RR)^n$. Define $W=\Span\{P_1,\ldots,P_n\}$. 
Let $n=N-d$ and we assume that $\deg X\equiv n \mod 2$. Then,
    \begin{enumerate}[(i)]
        \item $(X\cap W)_\RR=\{P_1,\ldots,P_n\}$ with probability $0$ if $\mathcal{N}(X)_{\min}>n$;
        \item $(X\cap W)_\RR=\{P_1,\ldots,P_n\}$ with probability $0<p<1$ if $\mathcal{N}(X)_{\min}\leq n<\mathcal{N}(X)_{\max}$;
        \item $(X\cap W)_\RR=\{P_1,\ldots,P_n\}$ with probability $1$ if $\mathcal{N}(X)_{\max}=n$.
    \end{enumerate}
\end{proposition}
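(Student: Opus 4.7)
The plan is to run the entire argument through the span map
\[
\sigma\colon X_\RR^n \dashrightarrow \Gr(N-d-1,N-1)_\RR,\qquad (P_1,\ldots,P_n)\longmapsto \Span\{P_1,\ldots,P_n\},
\]
combined with the characterization of $\Ncal(X)$ given by Theorem~\ref{thm: N(X)}. Since $X$ has a smooth real point, $X_\RR$ is Zariski-dense in $X$, so (as in the first proposition of this section) generic real points are generic complex points. Together with the linear independence lemma just proved, $\sigma$ is defined and continuous on a dense open full-measure subset $\Omega \subseteq X_\RR^n$, and for every $(P_1,\ldots,P_n)\in\Omega$ the span $W := \sigma(P_1,\ldots,P_n)$ meets $X$ transversely, hence belongs to some $\Ucal_k$ with $k \in \Ncal(X)$. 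Because $P_1,\ldots,P_n$ are $n$ distinct real points of $X\cap W$, we always have $k \geq n$.

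Parts (i) and (iii) are then immediate. For (i), if $\Ncal(X)_{\min} > n$ then every $W\in\sigma(\Omega)$ satisfies $\#(X\cap W)_\RR \geq \Ncal(X)_{\min} > n$, so $(X\cap W)_\RR$ properly contains $\{P_1,\ldots,P_n\}$ on a set of full measure; the probability of equality is $0$. For (iii), if $\Ncal(X)_{\max}=n$ every such $W$ has $\#(X\cap W)_\RR \leq n$, which combined with the lower bound $\geq n$ forces equality on a set of full measure, i.e.\ probability $1$.

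For part (ii) the task is to exhibit an open set of strictly positive measure in $X_\RR^n$ on \emph{each} side. Because $\Ncal(X)_{\min}\leq n\leq \Ncal(X)_{\max}$ and $n\equiv \deg X \mod 2$, Theorem~\ref{thm: N(X)}(iii) yields $n\in\Ncal(X)$; similarly some $k\in\Ncal(X)$ with $n<k\leq\Ncal(X)_{\max}$ exists (e.g.\ $k=n+2$), since $\Ncal(X)_{\max}>n$ and the parities match. Pick a generic $W_0\in\Ucal_n$; it meets $X$ transversely in $n$ real points $Q_1,\ldots,Q_n$, which span $W_0$ by the linear independence lemma. By continuity of $\sigma$ and openness of $\Ucal_n$, a sufficiently small neighborhood $\Ncal_1$ of $(Q_1,\ldots,Q_n)$ in $X_\RR^n$ is mapped by $\sigma$ into $\Ucal_n$; for every $(P_1,\ldots,P_n)\in\Ncal_1$ we then have $\#(X\cap W)_\RR=n$, and these $n$ points must be $P_1,\ldots,P_n$ themselves. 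Running the identical argument with $\Ucal_k$ in place of $\Ucal_n$ produces a second open neighborhood $\Ncal_2 \subseteq X_\RR^n$ on which $\#(X\cap W)_\RR=k>n$, hence $(X\cap W)_\RR\supsetneqq\{P_1,\ldots,P_n\}$. Both $\Ncal_1$ and $\Ncal_2$ lie near real smooth points of $X$, where $X_\RR$ has local dimension $d$ (by \cite[Proposition 7.6.2]{bochnak2013real}, as invoked in the proof of Theorem~\ref{thm: N(X)}(ii)), so $X_\RR^n$ has local dimension $nd$ there and both neighborhoods have strictly positive measure, giving $0<p<1$.

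The only nontrivial step is the continuity/openness argument at the heart of part (ii): one must verify that a small perturbation of a spanning tuple $(Q_1,\ldots,Q_n)$ in $X_\RR^n$ produces a small perturbation of $\Span\{Q_1,\ldots,Q_n\}$ in the Grassmannian, and that the $n$ real transverse intersections of $X$ with the perturbed $W$ track continuously to yield exactly the perturbed $P_i$'s. Both points reduce to the continuous dependence of roots of a polynomial system on its coefficients, already invoked in the proof of Lemma~\ref{lem: topology for Grassmannian chambers}, so I would simply cite that argument.
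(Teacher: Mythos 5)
Your proof is correct and follows essentially the same route as the paper's: both arguments run through the span map $\phi = \sigma : X_\RR^n \dashrightarrow \Gr(n-1,N-1)_\RR$, identify the relevant preimages $\phi^{-1}(\Ucal_k)$, use the linear independence lemma to show these preimages are nonempty open when $k \geq n$ and $k\in \Ncal(X)$, and split into cases according to whether $\phi^{-1}(\Ucal_n)$ is empty, nonempty but not dense, or dense. Your presentation of part (ii) via explicit open sets $\Ncal_1, \Ncal_2$ around spanning tuples in $\Ucal_n$ and $\Ucal_{n+2}$ is a slightly more detailed unpacking of the same argument the paper gives, and your use of Theorem~\ref{thm: N(X)}(iii) to produce $n \in \Ncal(X)$ is correct and consistent with what the paper does implicitly.
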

\begin{proof}
Consider the map $\phi: (X_\RR)^n \dashrightarrow \Gr(n-1,N-1)_\RR$ that sends $n$ points on $X_\RR$ to the $n-1$ dimensional linear space they span whenever the $n$ points are linearly independent. The map $\phi$ is continuous. 

Recall from Definition \ref{def: U_k,Z_X} and Lemma \ref{lem: topology for Grassmannian chambers} that for each $k\in \Ncal(X)$, there is a nonempty open set $\Ucal_k\subseteq \Gr(n-1,N-1)_\RR$ parameterizing real $(n-1)$-spaces that intersect $X$ transversely in exactly $k$ real points. 
We will show that for $k\in \Ncal(X)$ and $k\geq n$ (if such a $k$ exists), the set $\phi^{-1}(\Ucal_k)$ is nonempty and it is open since $\phi$ is continuous.
For a generic linear space $W\in \Ucal_k$, any subset of $\dim V+1=n$ intersection points of $W\cap X$ is linearly independent. 
The intersection $W\cap X$ has $k$ real intersection points with $k\geq n$, so we can choose $n$ real intersection points to span $W$. 
In particular, $W$ is in the image of $\phi$ and $\phi^{-1}(\Ucal_k)$ is non-empty and open.
Moreover, the closure of $\bigcup_{k\in \Ncal(X), k\geq n} \phi^{-1}(\Ucal_k)$ is the domain of $\phi$.

Note that $(X\cap W)_\RR = \{P_1,\ldots, P_n\}$ if and only if $\{P_1,\ldots,P_n\}\in \phi^{-1}(\Ucal_n)$. So, the probability is 1 when $\phi^{-1}(\Ucal_n)$ is dense. 
In this case, if $k\in \Ncal(X)$ and $k\geq n$, we must have $k=n$, so $\Ncal(X)_{\max}=n$.
The probability is in the interval $(0,1)$ when $\phi^{-1}(\Ucal_n)$ is non-empty and open but not dense. 
This happens when $\mathcal{N}(X)_{\min}\leq n<\mathcal{N}(X)_{\max}$.
Finally, the probability is $0$ when $\phi^{-1}(\Ucal_n)$ is empty; i.e., when $\Ncal_{\min}>n$.
\end{proof}

\begin{proposition}
Let $P_1,\ldots,P_n$ be random points on $X$ that follow a strictly positive probability measure on $(X_\RR)^n$. Let $W=\Span\{P_1,\ldots,P_n\}$. 
Assume $n+d=N$ and $\deg X\not\equiv n \mod 2$ or $n+d>N$.
Then $$(X\cap W)_\RR \supsetneqq \{P_1,\ldots,P_n\}.$$ 
Moreover, when $n+d>N$, $(X\cap W)_\RR$ contains infinitely many real points.   
\end{proposition}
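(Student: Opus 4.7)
My plan is to treat the two subcases separately, using a parity/length count in the first and a local-dimension argument in the second.

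First I would handle the case $n+d = N$ with $\deg X \not\equiv n \pmod 2$. Here $\dim W = n-1$ has complementary dimension to $X$, so $X \cap W$ is a zero-dimensional scheme of total length $\deg X$. Combining the bound $\deg X \geq N - d = n$ (established in the proof of Theorem \ref{thm:complex_trichotomy}) with the parity hypothesis forces $\deg X - n$ to be strictly positive and odd. The assumption that $W$ meets $X$ smoothly at each $P_i$ means each $P_i$ is a reduced isolated point of $X \cap W$, contributing exactly one to the length. So the remaining scheme-theoretic contribution has odd length $\deg X - n$. Since $X$ and $W$ are defined over $\RR$, intersection points lying off the real locus come in complex-conjugate pairs with matching multiplicities, contributing an even total length. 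An odd remainder therefore forces at least one additional real intersection point distinct from $P_1,\ldots,P_n$, giving $(X \cap W)_\RR \supsetneqq \{P_1,\ldots,P_n\}$.

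Second, for $n+d > N$, the expected dimension $(n-1) + d - (N-1) = n+d-N$ is at least one. Since $X \cap W$ is smooth at $P_1$ by hypothesis, its local complex dimension at $P_1$ equals this expected value $n+d-N \geq 1$. Applying \cite[Proposition 7.6.2]{bochnak2013real} at the real smooth point $P_1$ (exactly as in the proof of Theorem \ref{thm: N(X)}(ii)), the local real dimension of $(X \cap W)_\RR$ at $P_1$ equals $n+d-N$, so a semi-algebraic neighborhood of $P_1$ inside $(X \cap W)_\RR$ has positive dimension and thus contains infinitely many real points. This simultaneously gives strict containment and the ``moreover'' assertion.

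The main conceptual point is in the first case: one might hope to invoke Theorem \ref{thm: N(X)}(i) directly, but that congruence was proved under transverse intersection, whereas the present hypothesis only guarantees smoothness at the $P_i$. The fix is to pass to scheme-theoretic length, where conjugate pairs contribute even length regardless of their multiplicities, so that the parity of $\deg X - n$ alone suffices to detect the existence of an extra real intersection point. No further genericity on the remaining intersection is needed.
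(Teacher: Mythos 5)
Your proof is correct and follows the same two-case structure as the paper, but your treatment of the parity case is a genuine refinement. For $n+d=N$, the paper's proof simply asserts that ``$X\cap W$ is transverse'' and then counts real points mod 2, an assertion that does not follow immediately from the stated hypothesis, which only guarantees that $W$ meets $X$ smoothly \emph{at $P_1,\ldots,P_n$}. You noticed this and replaced the transverse-point count with a scheme-theoretic length count: $P_1,\ldots,P_n$ each contribute length one because the intersection is reduced there, complex-conjugate components contribute even total length regardless of multiplicity, and the odd remainder $\deg X - n$ forces an extra real point. This cleanly avoids any claim about the intersection behavior away from the $P_i$. (One can in fact justify the paper's transversality claim for general $P_i$, using the argument that a generic complementary-dimension linear space is spanned by $n$ of its intersection points with $X$ and that the Hurwitz locus is a proper hypersurface in the Grassmannian, so the image of the span map is not contained in it; but your route is more self-contained.)

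One small gap in your argument: you assert that $X\cap W$ is ``a zero-dimensional scheme of total length $\deg X$'' without justification, and the length argument collapses if the intersection had a positive-dimensional component. This does follow, but it deserves a sentence: any positive-dimensional component $Z$ of $X\cap W$ would meet the hyperplane $\Span\{P_1,\ldots,P_{n-1}\}\subset W$, and by the generalized trisecant lemma that hyperplane section of $X$ is exactly $\{P_1,\ldots,P_{n-1}\}$, so $Z$ would pass through some $P_i$ — contradicting the hypothesis that $X\cap W$ is a reduced isolated point there. With zero-dimensionality in hand, Bézout gives length $\deg X$ since $X$ is smooth (hence Cohen–Macaulay) and $W$ has complementary dimension. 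For the case $n+d>N$, your argument matches the paper's verbatim.
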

\begin{proof}
It suffices to show the result for general $P_1,\ldots,P_n\in X_\RR$ for which $W$ intersects $X$ smoothly at the points $P_1,\ldots, P_n$, since such $(P_1,\ldots,P_n)$ occurs almost surely.
Suppose first $n+d=N$ and $\deg X\not\equiv n \mod 2$. The intersection $X\cap W$ is transverse. 
Since complex points come in pairs in $X\cap W$, $\#(X\cap W)_\RR \equiv \deg X \mod 2$. So, if $(X\cap W)_\RR=\{P_1,\ldots,P_n\}$, we must have $n\equiv \deg X \mod 2$, a contradiction.

Now suppose $n+d>N$. 
The intersection $X\cap W$ contains real smooth points $P_1,\ldots,P_n$ and it has complex dimension $n+d-N>0$. 
By \cite[Proposition 7.6.2]{bochnak2013real}, the set $(X\cap W)_\RR$ has a semialgebraic neighborhood of dimension $n+d-N$ around each $P_i$ for $i=1,\ldots,n$. Hence $(X\cap W)_\RR$ contains infinitely many points.
\end{proof}

\section{Examples}
\subsection{Segre-Veronese varieties}\label{subsec: segre-veronese}

The integers $\Ncal(X)_{\min}$ and $\Ncal(X)_{\max}$ characterize $\Ncal(X)$ and the cases in the real generalized trisecant trichotomy, see Proposition \ref{thm: N(X)} and Theorem~\ref{thm: real trisecant lemma}.
In this section, we study $\Ncal(X)_{\min}$ and $\Ncal(X)_{\max}$ for Segre-Veronese varieties. We prove Theorems \ref{thm: N(X) for segre-veronese} and \ref{thm: N(X) for small segre-veronese}.

\begin{lemma}\label{lem: degree of SV}
The dimension of $\SV_{(m_1,\ldots,m_n)}(d_1,\ldots,d_n)$ is $m_1+\ldots+m_n$
and the degree is 
$$ \frac{(m_1+\ldots+m_n)!}{m_1!\cdots m_n!} \prod_{i=1}^n d_i^{m_i}.$$
\end{lemma}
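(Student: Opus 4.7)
The plan is to compute the dimension and degree via the parameterization map $\phi: \PP^{m_1}_\CC \times \cdots \times \PP^{m_n}_\CC \to \PP^{N-1}_\CC$ defined by the monomials of multidegree $(d_1,\ldots,d_n)$.

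For the dimension, I would first check that $\phi$ is injective, hence birational onto its image. This can be seen from its coordinates: among the monomials of multidegree $(d_1,\ldots,d_n)$ one finds $\prod_{k\neq i} x_{k,0}^{d_k} \cdot x_{i,0}^{d_i-1} x_{i,j}$ for each $i$ and $j$, and the ratios of these to $\prod_k x_{k,0}^{d_k}$ recover each factor $\mathbf{x}_i$ up to scaling (on the open set where $x_{i,0}\neq 0$). Therefore the image has the same dimension as the source, which is $m_1 + \cdots + m_n$.

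For the degree, I would use intersection theory on the product. Let $h_i$ denote the pullback to $\PP^{m_1} \times \cdots \times \PP^{m_n}$ of the hyperplane class on the $i$-th factor, so that the Chow ring is $\ZZ[h_1,\ldots,h_n]/(h_1^{m_1+1}, \ldots, h_n^{m_n+1})$ with $\int h_1^{m_1} \cdots h_n^{m_n} = 1$. Since the defining monomials of $\phi$ have multidegree $(d_1,\ldots,d_n)$, the pullback of the hyperplane class on $\PP^{N-1}$ is $H := d_1 h_1 + \cdots + d_n h_n$. Because $\phi$ is birational onto its image, the degree of the Segre--Veronese variety equals the top self-intersection $H^{M}$ on the product, where $M = m_1+\cdots+m_n$. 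Expanding by the multinomial theorem,
$$
H^M = \sum_{k_1+\cdots+k_n = M} \binom{M}{k_1,\ldots,k_n}\prod_{i=1}^n d_i^{k_i} h_i^{k_i},
$$
and the vanishing $h_i^{k_i}=0$ for $k_i > m_i$ together with the constraint $\sum k_i = M = \sum m_i$ forces $k_i = m_i$ for every $i$. Only one term survives, giving
$$
\deg X \;=\; \binom{M}{m_1,\ldots,m_n} \prod_{i=1}^n d_i^{m_i} \;=\; \frac{(m_1+\cdots+m_n)!}{m_1!\cdots m_n!}\prod_{i=1}^n d_i^{m_i}.
$$

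I do not anticipate a real obstacle here; the only point requiring care is the birationality of $\phi$, which is needed to equate $\deg X$ with $H^M$ rather than $(\deg \phi) \cdot \deg X$. The injectivity argument above handles this. Alternatively, one could decompose $\phi$ as the composition of the Veronese embeddings $\PP^{m_i} \hookrightarrow \PP^{\binom{m_i+d_i}{d_i}-1}$ followed by the Segre embedding, each of which is a closed immersion, so $\phi$ itself is a closed immersion and the product maps isomorphically onto $X$.
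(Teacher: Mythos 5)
Your proof is correct, but it takes a genuinely different route from the paper. The paper observes that $\SV_{(m_1,\ldots,m_n)}(d_1,\ldots,d_n)$ is a toric variety whose associated polytope is the product $d_1\triangle_{m_1}\times\cdots\times d_n\triangle_{m_n}$ of dilated simplices, and then invokes Kushnirenko's theorem to read off the degree as the normalized volume of this polytope. You instead work in the Chow ring of $\PP^{m_1}\times\cdots\times\PP^{m_n}$: you identify the pullback of the hyperplane class as $d_1h_1+\cdots+d_nh_n$, expand its top power by the multinomial theorem, and observe that the relations $h_i^{m_i+1}=0$ together with the constraint $\sum k_i=\sum m_i$ kill all but the single term $k_i=m_i$. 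Both routes are standard; yours is more elementary in that it needs only the Chow ring of a product of projective spaces and the fact that $\phi$ is a closed immersion (which you correctly justify by factoring through Veronese and Segre embeddings, or by the explicit ratio argument), whereas the paper's is shorter for a reader already comfortable with toric geometry and the volume interpretation of degree. One minor remark: for the degree computation alone you do not strictly need $\phi$ to be a closed immersion — birationality onto the image suffices, as you note — but the closed-immersion fact is worth having anyway, since it also gives the dimension statement immediately and shows $X$ is smooth.
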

\begin{proof}
This is an exercise about Hilbert polynomials; we include a proof for convenience.
The variety $\SV_{(m_1,\ldots,m_n)}(d_1,\ldots,d_n)$ has dimension $m_1+\ldots+m_n$ since it is the image of an embedding of $\PP_\CC^{m_1}\times \cdots \times \PP_\CC^{m_n}$.
Segre-Veronese varieties are toric varieties. The corresponding polytope for $\SV_{(m_1,\ldots,m_n)}(d_1,\ldots,d_n)$ is $d_1\triangle_{m_1}\times \cdots \times d_n\triangle_{m_n}$ where $d_i\triangle_{m_i}$ means the simplex of dimension $m_i$ dilated $d_i$ times.
By Kushnirenko's Theorem \cite{kushnirenko1976newton}, its degree is $\frac{(m_1+\ldots+m_n)!}{m_1!\cdots m_n!} \Vol(d_1\triangle_{m_1}\times \cdots \times d_n\triangle_{m_n})=\frac{(m_1+\ldots+m_n)!}{m_1!\cdots m_n!} \prod_{i=1}^n d_i^{m_i}$.
\end{proof}

Let $\mathbf{x}_i=[x_{i,0},\ldots,x_{i,{m_i}}]$ be the projective coordinates of $\PP_\CC^{m_i}$.
The Segre-Veronese variety $X = \SV_{(m_1,\ldots,m_n)}(d_1,\ldots,d_n)$ is the image of the monomial map that sends $(\mathbf{x}_1,\ldots,\mathbf{x}_n)$ to the vector of monomials with multidegree $(d_1,\ldots,d_n)$. 
Hence the intersection points of $X$ and a complementary dimension real linear space can be expressed as $\dim X= m_1+\ldots+m_n$ polynomials in $(\mathbf{x}_0,\ldots,\mathbf{x}_n)$ with monomials of multidegree $(d_1,\ldots,d_n)$.

We consider polynomial systems of the product form
\begin{align}
\begin{split}
    f_1^{(1)}(\fx_1) \cdot \ldots \cdot f_n^{(1)}(\fx_n)&=0\\
    \vdots&\\
    f_1^{(M)}(\fx_1)\cdot \ldots \cdot f_n^{(M)}(\fx_n)&=0,
\label{eq: poly system max}
\end{split}
\end{align}
where $M = m_1 + \cdots + m_n$.
Each polynomial consists of monomials of multidegree $(d_1,\ldots,d_n)$.
Therefore, the solutions to the polynomial system are the intersection points of $X$ with some complementary dimension real linear space.
The $\fx_i$ part of each solution is a solution to $m_i$ equations $f_i^{(j_1)}(\fx_i)=0,\ldots,f_i^{(j_{m_i})}(\fx_i)=0$ for some $1\leq j_1 <\ldots< j_{m_i}\leq M$.

 For Segre-Veronese varieties, 
the maximum number of real solutions is the degree.
\begin{lemma}\label{lem: SV max possible}
For any Segre-Veronese variety $X = \SV_{(m_1,\ldots,m_n)}(d_1,\ldots,d_n)$, we have $$\Ncal(X)_{\max}=\deg X =\frac{(m_1+\ldots+m_n)!}{m_1!\cdots m_n!} \prod_{i=1}^n d_i^{m_i}.$$
\end{lemma}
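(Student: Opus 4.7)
My plan is to combine the upper bound $\Ncal(X)_{\max} \leq \deg X$ from Theorem~\ref{thm: N(X)}(ii) with a matching lower bound obtained by explicitly exhibiting a real linear subspace $W \subseteq \PP_\CC^{N-1}$ of complementary dimension to $X$ that intersects $X$ transversely in $\deg X$ real points. The closed-form formula for $\deg X$ is immediate from Lemma~\ref{lem: degree of SV}, so the real content is the construction.

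Writing $M := m_1 + \cdots + m_n = \dim X$, a codimension-$M$ linear subspace of $\PP_\CC^{N-1}$ pulls back under the Segre-Veronese map to a system of $M$ multihomogeneous polynomials on $\PP_\CC^{m_1} \times \cdots \times \PP_\CC^{m_n}$ of multidegree $(d_1, \ldots, d_n)$, and every such system arises this way. I would choose, for $k \in \{1,\ldots,M\}$, $i \in \{1,\ldots,n\}$, $j \in \{1,\ldots,d_i\}$, generic real linear forms $L_{k,i,j}(\mathbf{x}_i)$ on $\PP_\CC^{m_i}$, and set
$$F_k := \prod_{i=1}^n \prod_{j=1}^{d_i} L_{k,i,j}(\mathbf{x}_i).$$
Each $F_k$ has the required multidegree, so the common vanishing $\{F_1 = \cdots = F_M = 0\}$ defines a real linear space $W$ of codimension $M$ in $\PP_\CC^{N-1}$.

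Since each factor depends on only one block $\mathbf{x}_i$, a common zero $(\mathbf{x}_1^*,\ldots,\mathbf{x}_n^*)$ assigns to every $k$ a pair $(i_k, j_k)$ with $L_{k,i_k,j_k}(\mathbf{x}_{i_k}^*) = 0$. The map $k \mapsto i_k$ partitions $\{1,\ldots,M\}$ into blocks $S_i := \{k : i_k = i\}$; for generic linear forms the restricted subsystem on $\mathbf{x}_i \in \PP_\CC^{m_i}$ has a nonempty zero-dimensional solution set only when $|S_i| = m_i$, in which case it contributes $d_i^{m_i}$ real points (one per choice of root index $j_k$ for each $k \in S_i$). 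Summing over the $\binom{M}{m_1, \ldots, m_n}$ admissible ordered partitions yields exactly $\frac{M!}{m_1!\cdots m_n!}\prod_i d_i^{m_i} = \deg X$ real common zeros.

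The main obstacle is confirming that these $\deg X$ solutions are genuinely distinct simple points, so that $W$ meets $X$ transversely and is admissible for $\Ncal(X)$. Distinctness fails only when some $\mathbf{x}_i^*$ coming from one partition also satisfies a factor $L_{k,i',j'}$ with $i' \neq i_k$, which is a proper closed condition and is avoided on a nonempty Zariski open set of choices. For transversality at a solution point, the product rule combined with the vanishing of $L_{k,i_k,j_k}$ forces every other summand of $dF_k$ to vanish, so $dF_k$ reduces to a nonzero scalar multiple of $dL_{k,i_k,j_k}$, which lives in the $d\mathbf{x}_{i_k}$ direction only. Thus the $m_i$ differentials $\{dF_k\}_{k \in S_i}$ pair with the tangent space of $\PP_\CC^{m_i}$ at $\mathbf{x}_i^*$ and are linearly independent for generic $L_{k,i,j}$. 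Both genericity conditions cut out nonempty Zariski open (hence Euclidean dense) real sets of linear forms, so a single real choice realizes $\Ncal(X)_{\max} \geq \deg X$ and completes the proof.
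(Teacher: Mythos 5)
Your proposal is correct and takes essentially the same approach as the paper: you build the complementary-dimension linear space by pulling back a system of $M = m_1+\cdots+m_n$ polynomials, each a product of generic linear forms $L_{k,i,j}(\mathbf{x}_i)$ over all blocks $i$ and factor indices $j$, and then count solutions via the $\binom{M}{m_1,\ldots,m_n}$ ways to partition the equation indices among blocks, with $\prod_i d_i^{m_i}$ choices of roots per partition. The only difference is that you spell out the distinctness and transversality checks (via the product rule for $dF_k$), which the paper dispatches with a genericity remark.
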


\begin{proof}
We build a system of equations of the form in~\eqref{eq: poly system max}. 
Let $M=m_1+\ldots+m_n$. Then $\dim X = M$.
We pick $d_i M$ generic vectors in $\RR^{m_i+1}$ and denote them by $\ell_1^{(1)},\ldots,\ell_1^{(d_i)},\ldots,\ell_M^{(1)},$ $\ldots,\ell_M^{(d_i)}$.
Let $f_i^{(j)}(\fx_i)=\prod_{k=1}^{d_i} (\ell_j^{(k)}\cdot \fx_i)$ for $i=1,\ldots,n, j=1,\ldots,M$. 
The polynomial $f_i^{(j)}(\fx_i)$ is homogeneous of degree $d_i$ in $\fx_i$. 

There are $\frac{M!}{m_1!\cdots m_n!}$ many ways to partition $M$ polynomials into subsets of size $(m_1,\ldots,m_n)$. 
For the subset of $m_i$ polynomials, we set their $f_i$ part equal to 0 for $i=1,\ldots,n$.
This has $d_i^{m_i}$ solutions, since the system $f_i^{(j_1)}(\fx_i)=0,\ldots,f_i^{(j_{m_i})}(\fx_i)=0$ where $1\leq j_1 <\ldots< j_{m_i}\leq M$  has $d_i^{m_i}$ real distinct solutions, 
each one corresponding to the solution of $m_i$ linear equations.
Hence, the polynomial system \eqref{eq: poly system max} has $\frac{(M)!}{m_1!\cdots m_n!} \prod_{i=1}^n d_i^{m_i}$ solutions. The solutions are distinct since the linear forms are generic.
\end{proof}

When $X$ is a toric variety, triangulations of its associated polytope give information about $\Ncal(X)_{\max}$.
The following result also covers the previous Theorem.

\begin{theorem}[{\cite[Corollary 2.4]{sturmfels1994number}}]
Suppose $X$ is a toric variety with associated polytope $P$. If $P$ admits a unimodular regular triangulation with each simplex having unit volume 1, then $\Ncal(X)_{\max}=\deg X$.
\end{theorem}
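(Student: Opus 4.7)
The plan is to reduce the statement to a question about sparse real polynomial systems and then invoke Viro's patchworking. Using the monomial parametrization of the toric variety $X$, a generic complementary-dimension real linear section of $X$ pulls back, on the dense torus $(\CC^\ast)^n$ with $n=\dim P$, to a square system $f_1=\cdots=f_n=0$ in which each $f_i$ is a real linear combination of the Laurent monomials $x^a$ for $a\in P\cap\mathbb{Z}^n$. By Kushnirenko's theorem the generic number of torus solutions equals $n!\Vol(P)=\deg X$; since the upper bound $\Ncal(X)_{\max}\le\deg X$ is already contained in Theorem~\ref{thm: N(X)}(ii), it suffices to exhibit real coefficients for which all $\deg X$ solutions are real and transverse.

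For this I would apply Viro's construction. Let $\omega\colon P\cap\mathbb{Z}^n\to\RR$ be a height function whose lower convex hull projects to the given regular triangulation $T$, and consider the family
\[
f_i(x,t)\;=\;\sum_{a\in P\cap\mathbb{Z}^n}\epsilon_{i,a}\,t^{\omega(a)}\,x^a,\qquad i=1,\ldots,n,
\]
for small $t>0$ and signs $\epsilon_{i,a}\in\{\pm 1\}$ to be specified. Viro's theorem says that for $t$ sufficiently small, the real solutions in each fixed orthant of $(\RR^\ast)^n$ correspond bijectively to the real solutions of the truncated systems indexed by the simplices $\sigma\in T$, obtained by keeping only the monomials whose exponents are vertices of $\sigma$. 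Because $\sigma$ has unit normalized volume, a monomial change of coordinates converts this truncated system into a square binomial system with a unique nonzero solution, and that solution is real and lies in the positive orthant provided the signs on the vertices of $\sigma$ are chosen appropriately.

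The main obstacle is global sign consistency: each lattice point of $P$ belongs to many simplices of $T$, so a single choice of $\epsilon_{i,a}$ must make every truncated system produce a real root simultaneously, and one must also exploit roots in every orthant of $(\RR^\ast)^n$ to reach the full count. This is exactly the combinatorial heart of Sturmfels' argument in \cite{sturmfels1994number}, where he shows that for a unimodular (unit-volume) triangulation such a consistent sign assignment always exists — essentially because the binomial systems attached to neighboring simplices agree on shared faces, so one can bootstrap the choice simplex by simplex without contradiction. Summing the contributions $n!\Vol(\sigma)=1$ over all $\sigma\in T$ yields $n!\Vol(P)=\deg X$ real transverse solutions, producing a real linear space that intersects $X$ in exactly $\deg X$ real points. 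Hence $\Ncal(X)_{\max}=\deg X$.
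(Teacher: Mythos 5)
The paper itself gives no proof of this theorem; it is stated with a direct citation to Sturmfels~\cite{sturmfels1994number}. So the comparison is really between your sketch and Sturmfels' original argument, and on that score your plan is essentially the right one: reduce to a square unmixed sparse system supported on $P\cap\mathbb{Z}^n$, introduce a toric degeneration $f_i(x,t)=\sum_a c_{i,a}t^{\omega(a)}x^a$ whose lower hull induces the given regular triangulation, and let $t\to 0^+$ so the roots localize to the simplices, each unimodular simplex contributing exactly one root. Combined with the upper bound $\Ncal(X)_{\max}\le\deg X$ from Theorem~\ref{thm: N(X)}(ii), this gives the claim.

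One technical point you overstate, though: the ``global sign consistency'' you identify as the main obstacle is not actually an issue in the unimodular unmixed case, and Sturmfels does not need any delicate sign bootstrapping. Given a unimodular simplex with vertices $a_0,\dots,a_n$, the truncated real system is $n$ linear equations in the $n+1$ quantities $x^{a_0},\dots,x^{a_n}$; generically it pins down the real ratios $\lambda_j := x^{a_j}/x^{a_0}$. Because $(a_j-a_0)_{j=1}^n$ is a unimodular $\mathbb{Z}$-basis, the binomial system $x^{a_j-a_0}=\lambda_j$ has a unique solution in $(\RR^\ast)^n$ for any choice of nonzero real $\lambda_j$: the sign pattern of $x$ is determined by inverting the exponent matrix over $\mathbb{Z}/2\mathbb{Z}$ (possible precisely because it is unimodular), and the magnitudes by taking logs. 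So each simplex contributes one real root automatically, for any generic real coefficients; no coherent choice of $\epsilon_{i,a}$ across simplices is required. This is exactly where the unimodularity hypothesis is used, and it is what makes the count $\sum_{\sigma\in T} 1 = n!\Vol(P)=\deg X$ go through. Apart from that mischaracterization (and the minor framing that the relevant patchworking statement for zero-dimensional systems is really a toric-degeneration lemma rather than Viro's hypersurface theorem verbatim), the plan is sound.
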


For a large class of Segre-Veronese varieties, the minimum number of real solutions is 0.

\begin{lemma}\label{lem: SV min possible all even}
Let $X = \SV_{(m_1,\ldots,m_n)}(d_1,\ldots,d_n)$. If at least one of $d_1,\ldots,d_n$ is even, then 
$\Ncal(X)_{\min}=0$.
\end{lemma}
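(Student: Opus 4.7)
The plan is to construct a real linear space $W$ of complementary dimension to $X$ that intersects $X$ transversely with no real points. Without loss of generality assume $d_1$ is even, and set $M:=m_1+\ldots+m_n=\dim X$. The key observation is that $p(\mathbf{x}_1):=\sum_{i=0}^{m_1} x_{1,i}^{d_1}$ is a real form of degree $d_1$ that is strictly positive on $\RR^{m_1+1}\setminus\{0\}$.

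First, I would exhibit a degenerate configuration with no real solutions. Pick generic real polynomials $q_1,\ldots,q_M$ of multidegree $(d_2,\ldots,d_n)$ in $(\mathbf{x}_2,\ldots,\mathbf{x}_n)$ and set
$$f_j^{(0)}(\mathbf{x}_1,\ldots,\mathbf{x}_n) \;:=\; p(\mathbf{x}_1)\,q_j(\mathbf{x}_2,\ldots,\mathbf{x}_n).$$
Each $f_j^{(0)}$ has multidegree $(d_1,\ldots,d_n)$ and therefore defines a real linear form on $\PP_\CC^{N-1}$ via the Segre-Veronese embedding. For generic $q_j$ these forms are linearly independent, so they cut out a linear space $W_0$ of codimension $M$. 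Since the system $q_j=0$ for $j=1,\ldots,M$ is $M$ equations on $\PP_\RR^{m_2}\times\cdots\times\PP_\RR^{m_n}$, a variety of dimension $M-m_1<M$, an incidence-variety argument shows that for generic $q_j$ this overdetermined system has no common solution at all. Combined with the positivity of $p$ on real projective points, this gives $X_\RR\cap W_0=\emptyset$. (The edge case $n=1$ is handled by taking the $q_j$ to be generic nonzero real constants.)

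Next, I would perturb to recover transversality while preserving the absence of real points. The intersection $X\cap W_0$ is not transverse, as it contains the complex hypersurface $\{p=0\}\cap X$. Fix generic real polynomials $g_1,\ldots,g_M$ of multidegree $(d_1,\ldots,d_n)$ and consider the one-parameter family $f_j^{(\epsilon)}:=f_j^{(0)}+\epsilon g_j$, with corresponding linear space $W_\epsilon$. The absence of real solutions persists for small $\epsilon>0$ by compactness: $X_\RR$ and $(W_0)_\RR$ are disjoint closed subsets of the compact space $\PP_\RR^{N-1}$, so there is a Euclidean neighborhood of $W_0$ in the real Grassmannian every member $W$ of which satisfies $X_\RR\cap W=\emptyset$. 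Transversality for generic small $\epsilon>0$ holds because the locus of $M$-tuples giving a non-transverse intersection with $X$ is a proper Zariski-closed subset of the coefficient space (its complement is Zariski-dense, as a generic linear section of $X$ is transverse by Bertini), and for generic $(g_j)$ the line $\epsilon\mapsto(f_j^{(\epsilon)})_j$ is not contained in this discriminant.

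The main obstacle I anticipate is the overdetermined-system claim in the first step. Concretely, I would verify that the incidence variety
$$I \;=\; \bigl\{(\mathbf{q},\mathbf{x}) : \mathbf{x}\in \PP_\CC^{m_2}\times\cdots\times\PP_\CC^{m_n},\ q_j(\mathbf{x})=0 \text{ for all } j\bigr\}$$
inside the product of the coefficient space with $\PP_\CC^{m_2}\times\cdots\times\PP_\CC^{m_n}$ has codimension $M$, hence dimension strictly smaller than that of the coefficient space (the deficit being $m_1\geq 1$). Thus the projection of $I$ to the coefficient space is not dominant, so a generic $(q_j)$ lies outside its image. The remaining ingredients—compactness-based stability of the no-real-intersection property and Bertini-style genericity of transverse sections—are routine.
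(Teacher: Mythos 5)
Your proposal takes a genuinely different route from the paper's. The paper directly exhibits a \emph{transverse} complementary-dimension section with no real points: it takes first-factor polynomials $f_1^{(j)}(\fx_1)=(v_j^{(1)}\cdot\fx_1)^{d_1}+(v_j^{(2)}\cdot\fx_1)^{d_1}$ for generic real vectors $v$, observes that any $m_1$ of them impose $2m_1$ generic linear conditions on $\fx_1\in\PP_\RR^{m_1}$ and so have no common real zero (while factoring into $d_1$ generic complex linear forms, giving $d_1^{m_1}$ complex solutions), and then multiplies in generic degree-$d_i$ forms $f_i^{(j)}$ for $i\geq 2$; the resulting system~\eqref{eq: poly system max} has $\deg X$ distinct, entirely non-real solutions, so the corresponding linear space is already transverse. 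You instead build a \emph{degenerate} configuration (a single strictly positive form $p=\sum_i x_{1,i}^{d_1}$ times generic forms $q_j$ in the remaining factors) with empty real intersection, and then recover linear independence and transversality by a perturbation/compactness/Bertini argument. Both constructions exploit that $d_1$ is even by building positive-definite real forms in $\fx_1$; the paper's version carries the genericity all the way through the direct count, while yours trades that for an openness argument.

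There is, however, a gap in the intermediate claim that the $f_j^{(0)}=p\cdot q_j$ are linearly independent and hence that $W_0$ has codimension $M$. Since multiplication by $p$ is injective, the $f_j^{(0)}$ are independent only if the $q_j$ are, which requires $\prod_{i\geq 2}\binom{m_i+d_i}{d_i}\geq M$. This fails whenever $n=1$ and $m_1>1$ (your ``generic nonzero constants'' give $M$ scalar multiples of $p$, spanning a line), and also in mixed cases such as $\SV_{(100,1)}(2,1)$, where $M=101$ but the $q_j$ live in a $2$-dimensional space. Your statement that there is ``a Euclidean neighborhood of $W_0$ in the real Grassmannian'' then does not type-check, because $W_0$ is not a point of the relevant Grassmannian. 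The fix is routine: phrase the stability at the level of the $M$-tuple of forms rather than the Grassmannian point. The set of real $M$-tuples $(h_1,\ldots,h_M)$ of multidegree-$(d_1,\ldots,d_n)$ forms with no common zero on $X_\RR$ is open by compactness of $X_\RR$ and contains $(f_j^{(0)})_j$; the set of $M$-tuples that are linearly independent and whose common zero locus meets $X$ transversely is Zariski-open and dense; so for generic $(g_j)$ and small generic $\epsilon>0$ the tuple $(f_j^{(\epsilon)})_j$ lies in both. With that reformulation your argument goes through and yields the lemma.
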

\begin{proof}
We build a polynomial system as in~\eqref{eq: poly system max}.
Let $M=m_1+\ldots+m_n$. Then $\dim X = M$.
Without loss of generality, assume $d_1$ is even.
Pick $2M$ generic vectors in $\RR^{m_1+1}$ and denote them by $v_1^{(1)},v_1^{(2)},\ldots,v_M^{(1)},v_M^{(2)}$.
Let $f_1^{(j)}(\fx_1)=(v_j^{(1)}\cdot \fx_1)^{d_1}+(v_j^{(2)} \cdot \fx_1)^{d_1}$ for $j=1,\ldots,M$.
For a system $f_1^{(j_1)}(\fx_1)=0,\ldots,f_1^{(j_{m_1})}(\fx_1)=0$ where $1\leq j_1 <\ldots< j_{m_1}\leq M$, if it has a real solution, then the solution should satisfy 
$2m_1$ generic linear relations which is impossible. 
However, the system has $d_1^{m_1}$ distinct complex solutions since each $f_1^{(j)}(\fx_1)$ is a product of $d_1$ linear relations with complex coefficients.

Let $f_i^{(j)}(\fx_i)$ be a generic homogeneous degree $d_i$ polynomial, 
for $i=2,\ldots,n$ and $j=1,\ldots,M$.
It has $\frac{(m_1+\ldots+m_n)!}{m_1!\cdots m_n!} \prod_{i=1}^n d_i^{m_i}$ solutions. All the solutions are complex since their $\fx_1$ parts are complex and are distinct by genericity.
\end{proof}

\begin{lemma}\label{lem: reduce to multilinear}
Let $X = \SV_{(m_1,\ldots,m_n)}(d_1,\ldots,d_n)$ with all $d_i$ odd. Then 
$$\Ncal(X)\supseteq \Ncal(\SV_{(m_1,\ldots,m_n)}(1,\ldots,1)).$$
\end{lemma}

\begin{proof}
Let $M=m_1+\ldots+m_n$. Then $\dim X = M$. Suppose $k\in \Ncal(\SV_{(m_1,\ldots,m_n)}(1,\ldots,1))$ is achieved by a polynomial system $g_1=0,\ldots,g_M=0$ where each $g_i$ is a real coefficients polynomial with multidegree $(1,\ldots,1)$. 
Let $f_i^{(j)}(\fx_i)=(v_{i,j}^{(1)}\cdot \fx_i)^{d_i-1}+(v_{i,j}^{(2)} \cdot \fx_i)^{d_i-1}$ for $i=1,\ldots,n, j=1,\ldots,M$.
We consider the polynomial system 
\begin{align}
\begin{split}
    g_1 \cdot f_1^{(1)}(\fx_1) \cdot \ldots \cdot f_n^{(1)}(\fx_n)&=0\\
    \vdots&\\
    g_M \cdot f_1^{(M)}(\fx_1)\cdot \ldots \cdot f_n^{(M)}(\fx_n)&=0.
\label{eq: poly system d_i all odd reduce to 1}
\end{split}
\end{align}
Each polynomial consists of multidegree $(d_1,\ldots,d_n)$ monomials. 
Therefore, the solutions to the polynomial system are the intersection points of $X$ with some complementary dimension real linear space. 
All the solutions to \eqref{eq: poly system d_i all odd reduce to 1} are distinct by genericity of $v_{i,j}^{(1)},v_{i,j}^{(2)}$.
If there is a real solution satisfying $f_i^{(j)}(\fx_i)=0$, it must satisfy $v_{i,j}^{(1)}\cdot \fx_i=0$ and $v_{i,j}^{(2)}\cdot \fx_i=0$. So, it is a solution to a polynomial system with at least $M+1$ equations. But this contradicts $\dim X =M$ and the genericity of $v_{i,j}^{(1)},v_{i,j}^{(2)}$.
This implies all real solutions of \eqref{eq: poly system d_i all odd reduce to 1} come from $g_1=0,\ldots,g_M=0$.
Hence, $k\in \Ncal(X)$.
\end{proof}

\begin{remark}
Note that for fixed $m_1,\ldots,m_n$, the codimension of $\SV_{(m_1,\ldots,m_n)}(d_1,\ldots,d_n)$ increases as we increase $d_1,\ldots,d_n$. 
For large enough $d_1+\ldots+d_n$, we must have
$$\Ncal(\SV_{(m_1,\ldots,m_n)}(d_1,\ldots,d_n))_{\min}<\codim (\SV_{(m_1,\ldots,m_n)}(d_1,\ldots,d_n))+1.$$ Hence, if $\deg(\SV_{(m_1,\ldots,m_n)}(d_1,\ldots,d_n)) \equiv \codim (\SV_{(m_1,\ldots,m_n)}(d_1,\ldots,d_n))+1 \mod 2$, we are in the middle case of Theorem \ref{thm: real trisecant lemma}(b).
\end{remark}

\begin{lemma}\label{lem: d_i all odd}
Let $X = \SV_{(m_1,\ldots,m_n)}(d_1,\ldots,d_n)$.
If at least two of $m_1,\ldots,m_n$ are odd, then
$\Ncal(X)_{\min}=0$.
\end{lemma}
\begin{proof}
Without loss of generality, we assume that $m_1$ and $m_2$ are odd.
We first suppose that
$M=m_1+\ldots+m_n$ is even. 
Consider the polynomial system as in~\eqref{eq: poly system max}
where each $f_i^{(j)}$ is a product of $d_i$ generic linear forms with coefficients in $\CC^{m_i+1}$ and where
$f_i^{(2j-1)}$ is conjugate to $f_i^{(2j)}$ for $1\leq j \leq \frac{M}{2}$.
This system can be rewritten as a system of real polynomials by taking the real and imaginary parts of the conjugate pairs. All the solutions to this system are distinct since the linear relations are generic.
The $\fx_1$ part of each solution is the solution to $m_1$ linear relations, each a factor of $f_1^{(j_1)}(\fx_1)=0,\ldots,f_1^{(j_{m_1})}(\fx_1)=0$ for some $1\leq j_1 < \ldots<j_{m_1} \leq M$.
If it is real, then the linear relations should be a collection of conjugate pairs, but this is impossible because $m_1$ is odd.
Hence, the system \eqref{eq: poly system max} has no real solutions.

It remains to consider the case when $M$ is odd.
Consider a system of equations as in~\eqref{eq: poly system max},
where each $f_i^{(k)}$ is a product of $d_i$ generic linear forms with coefficients in $\CC^{m_i+1}$ for $k\leq M-1$, where
$f_i^{(2j-1)}$ is conjugate to $f_i^{(2j)}$ for $i=1,2$ and $1\leq j \leq \lfloor \frac{M}{2} \rfloor$,
and where each $f_i^{(M)}$ is a product of $d_i$ generic linear relations with real coefficients for $i=1,\ldots,n$. 
The $\fx_i$ part of each solution is the solution to $m_i$ linear relations that are factors of $f_i^{(j_1)}(\fx_i),\ldots,f_i^{(j_{m_1})}(\fx_i)$ for some $1\leq j_1 < \ldots<j_{m_1} \leq M$ and $i=1,\ldots,n$.
If it is real, then the linear relations should be a collection of real solutions and pairs of conjugate solutions. Hence $f_1^{(M)}(\fx_1) = 0$.
Similarly, we must have 
$f_2^{(M)}(\fx_2) = 0$.
But a solution cannot make two factors of the same polynomial in our system vanish, by genericity. 
Hence this system has no real solutions.
\end{proof}

\begin{proof}[Proof of Theorem \ref{thm: N(X) for segre-veronese}]
Part (i) is shown in Lemma \ref{lem: SV max possible}.
Part (iii) is shown in Lemma \ref{lem: SV min possible all even}.
Part (iv) is shown in Lemma \ref{lem: reduce to multilinear}.
Part (ii) follows from Lemma \ref{lem: d_i all odd}.
\end{proof}

The variety $\SV_{1,n}(1,1)$ has degree $n+1$. Hence it is a variety with minimal degree. 
The above results do not cover the case of $\Ncal(\SV_{1,n}(1,1))_{\min}$ when $n$ is even.
However, we can use induction to show the following. 

\begin{lemma}\label{thm:seg 1,n}
The Segre variety $X = \SV_{1,n}(1,1)$ has degree $n+1$ and 
$$
\Ncal(X)= \{ k: 0 \leq k\leq n+1, k\equiv n+1 \mod 2  \}.
$$
\end{lemma}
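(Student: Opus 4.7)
The plan is to first pin down everything about $\Ncal(X)$ except which small values occur, then handle the rest via a matrix pencil reformulation. By Lemma~\ref{lem: degree of SV}, $\dim X = n+1$ and $\deg X = n+1$, so $X$ has codimension $n+1$ in $\PP_\CC^{2n+1}$. Theorem~\ref{thm: N(X)}(i,ii) gives $\Ncal(X)_{\max} \leq \deg X = n+1$ and $\Ncal(X)_{\max}\geq N-d=n+1$, hence $\Ncal(X)_{\max}=n+1$, and every element of $\Ncal(X)$ has the parity of $n+1$. By Theorem~\ref{thm: N(X)}(iii), it remains only to show that every integer $k$ with $0\leq k\leq n+1$ and $k\equiv n+1\pmod 2$ lies in $\Ncal(X)$.

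I would reformulate the intersection problem via pencils of square matrices. Identifying $\PP_\CC^{2n+1}$ with the projectivization of $2\times(n+1)$ complex matrices, $X$ is the locus of rank-one matrices $xy^\T$. A real codimension-$(n+1)$ subspace $W$ is cut out by $n+1$ real linear forms on matrices, which can be organized into two real $(n+1)\times(n+1)$ matrices $(A,B)$ such that
\[
X \cap W \;=\;\bigl\{\,[xy^\T]\,:\,(x_0A+x_1B)\,y=0\,\bigr\}.
\]
For generic $W$, every root of $\det(x_0A+x_1B)$ in $\PP_\CC^1$ is simple with one-dimensional kernel, giving a transverse intersection point. Hence $\#(X\cap W)_\RR$ equals the number of real roots of the real binary form $\det(x_0A+x_1B)$ of degree $n+1$.

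To realize any parity-correct $k$, I would take $B=I_{n+1}$ and $A$ a real block-diagonal matrix with $k$ distinct real eigenvalues (from $1\times 1$ blocks) and $(n+1-k)/2$ pairs of simple complex-conjugate eigenvalues (from $2\times 2$ blocks, e.g.\ rotation-type blocks). Then $\det(x_0A+x_1I)$ has exactly $k$ distinct real roots in $\PP_\RR^1$, giving a transverse $W$ with exactly $k$ real intersection points. This construction naturally carries an induction: starting from a pencil for $\SV_{1,n-1}(1,1)$ with $k-1$ real roots, adjoining a single $1\times 1$ real block yields a pencil for $\SV_{1,n}(1,1)$ with $k$ real roots, while adjoining a $2\times 2$ block with complex-conjugate eigenvalues advances $n$ by two without changing the real count.

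I expect the main obstacle to be verifying transversality cleanly: one must check that the pencil parametrization $(A,B)$ covers a dense open subset of the codimension-$(n+1)$ real Grassmannian, and that each simple root of $\det(x_0A+x_1B)$ corresponds to a single transverse point of $X\cap W$. Both reduce to the standard fact that simplicity of a root together with a one-dimensional kernel at that root forces transversality in the Segre, conditions that hold on an open dense subset of $(A,B)$ containing the explicit block constructions above.
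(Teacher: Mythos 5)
Your proof is correct but takes a genuinely different route from the paper. The paper splits on parity of $n$: when $n$ is odd it invokes Lemma~\ref{lem: SV max possible} and Lemma~\ref{lem: d_i all odd} to get $\Ncal(X)_{\min}=0$ and $\Ncal(X)_{\max}=n+1$, and then uses Theorem~\ref{thm: N(X)}(iii); when $n$ is even it builds an explicit polynomial system of the form~\eqref{eq:segre 1,n} inductively, adding an equation $y_n x_1=0$ to reduce to the $\SV_{1,n-1}(1,1)$ case with zero real solutions plus one extra real point. Your matrix-pencil reformulation --- viewing $\PP_\CC^{2n+1}$ as $2\times(n+1)$ matrices, writing $X\cap W$ as $\{[xy\T]:(x_0A+x_1B)y=0\}$, and counting real roots of the degree-$(n+1)$ binary form $\det(x_0A+x_1B)$ --- handles all parities uniformly: you only need to exhibit, for every $k\equiv n+1\pmod 2$ with $0\leq k\leq n+1$, a real $(n+1)\times(n+1)$ matrix with exactly $k$ real eigenvalues and all eigenvalues simple, which the block-diagonal construction does immediately. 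Your transversality worry resolves itself more simply than you suggest: the map $(A,B)\mapsto W$ is surjective onto the Grassmannian (not just dense, modulo $\GL_{n+1}$ acting on the rows), and for your explicit $(A,I)$ with $n+1$ distinct eigenvalues the intersection $X\cap W$ consists of exactly $n+1=\deg X$ distinct points, which automatically forces transversality by a B\'ezout count. The trade-off is that your pencil argument is tailored to the $\PP^1\times\PP^n$ Segre, whereas the paper's Lemmas~\ref{lem: SV max possible}--\ref{lem: d_i all odd} and the inductive template generalize to other Segre--Veronese varieties (they reuse the same machinery in Lemma~\ref{lem:seg 2,n}); your approach is cleaner and more self-contained for this particular statement.
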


\begin{proof}
Suppose first that $n$ is odd.
By Lemma~\ref{lem: SV max possible}, $\Ncal(X)_{\max}=n+1$.
By Lemma~\ref{lem: d_i all odd},  $\Ncal(X)_{\min}=0$.
So, $\Ncal(X)= \{\, k\,: 0 \leq k\leq n+1,\,\, k\equiv n+1 \mod 2  \,\}$.

Now, suppose that $n$ is even. It suffices to show that $\Ncal(X)_{\min}=1$. 
Let the coordinates for $\PP_\CC^1$ and $\PP_\CC^n$ be $[x_0,x_1]$ and $[y_0,\ldots,y_n]$.
Consider a generic system of real polynomials:
\begin{align}
f_1(x_0,x_1,y_0,\ldots,y_{n-1})+y_n(\lambda_1x_1+\mu_1x_0)&=0\\
\vdots&\\
f_n(x_0,x_1,y_0,\ldots,y_{n-1})+y_n(\lambda_nx_1+\mu_nx_0)&=0\\
y_nx_1&=0, \label{eq:segre 1,n}
\end{align}
where 
each $f_i$ is bihomogeneous with multidegree $(1,1)$ in $[x_0,x_1]$ and $[y_0,\ldots,y_{n-1}]$,
and the system $f_1(x_0,x_1,y_0,\ldots,y_{n-1})=0,\ldots,f_n(x_0,x_1,y_1,\ldots,y_{n-1})=0$ has no real solutions (which is possible by the result when $n$ is odd).
We may further assume the system has no solution with $x_1=0$, by genericity.
Substituting $x_1=0$ in the first $n$ polynomials of \eqref{eq:segre 1,n} gives back $n$ linear equations with one real solution; we assume the solution has $y_n\neq0$, by genericity.
Thus \eqref{eq:segre 1,n} has
\[
\deg \SV_{1,n-1}(1,1)+1=\deg \SV_{1,n}(1,1)
\]
distinct solutions, of which exactly one is real.
Hence, $\Ncal(X)_{\min}=1$.
\end{proof}

We can use the result about $\Ncal(\SV_{1,n}(1,1))$ to inductively construct polynomial systems and gain information about $\Ncal(\SV_{2,n}(1,1))$.
The result of $\Ncal(\SV_{2,2})(1,1)$ is obtained using numerical algebraic geometry software \cite{breiding2018homotopycontinuation} and certified via \cite{breiding2023certifying}.
The remaining cases use a result about orbits of tensors under the action of general linear groups. 

\begin{lemma}\label{lem:seg 2,n}
For Segre varieties $X = \SV_{2,n}(1,1)$ with $n\geq 2$, we have
$$
\Ncal(X)\supseteq \{\, k:\, \lfloor \frac{n-2}{2} \rfloor \leq k\leq \deg X ,\,\, k\equiv  \deg X \mod 2  \,\}.
$$
\end{lemma}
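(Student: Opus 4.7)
The plan is to prove, by induction on $n$, the sharper statement that $\Ncal(\SV_{2,n}(1,1))_{\min}\leq\lfloor(n+2)/2\rfloor$. Combined with $\Ncal(X)_{\max}=\deg X$ (Lemma~\ref{lem: SV max possible}) and the parity identity $\lfloor(n+2)/2\rfloor\equiv\binom{n+2}{2}\pmod{2}$, Theorem~\ref{thm: N(X)}(iii) will then deliver the stated inclusion. The base case $n=1$ is immediate from the isomorphism $\SV_{2,1}(1,1)\cong\SV_{1,2}(1,1)$ obtained by swapping the two tensor factors, together with Lemma~\ref{thm:seg 1,n}, which gives $\Ncal(\SV_{1,2}(1,1))_{\min}=1=\lfloor 3/2\rfloor$.

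For the inductive step, I would mimic the product construction used in the proof of Lemma~\ref{thm:seg 1,n}. With coordinates $[x_0{:}x_1{:}x_2]$ on $\PP^2$ and $[y_0{:}\cdots{:}y_n]$ on $\PP^n$, consider the system of $n+2$ bilinear equations
\begin{align*}
F_i(\mathbf{x},\mathbf{y}) = \bar{g}_i(x_0,x_1,\mathbf{y}) + x_2\,\tilde{h}_i(\mathbf{y}) = 0 \quad (i=1,\ldots,n+1), \qquad x_2\,y_n = 0,
\end{align*}
where $\bar{g}_i$ is bilinear in $(x_0,x_1,\mathbf{y})$ and $\tilde{h}_i$ is linear in $\mathbf{y}$. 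Any solution must satisfy $x_2=0$ or $y_n=0$, and for generic coefficients no solution satisfies both (the doubly restricted system is overdetermined on $\PP^1\times\PP^{n-1}$). If $x_2=0$, the remaining $n+1$ equations reduce to $\bar{g}_i=0$, a complementary-dimension linear section of $\SV_{1,n}(1,1)$ with $n+1$ complex solutions. If $y_n=0$, they restrict to $\bar{g}_i|_{y_n=0}+x_2\,\tilde{h}_i|_{y_n=0}=0$ on $\PP^2\times\PP^{n-1}$, a complementary-dimension linear section of $\SV_{2,n-1}(1,1)$ with $\binom{n+1}{2}$ complex solutions. These counts sum to $\binom{n+2}{2}=\deg\SV_{2,n}(1,1)$.

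To realize the claimed minimum, I would first choose $\bar{g}_i$ so that $\{\bar{g}_i=0\}$ has $\Ncal(\SV_{1,n}(1,1))_{\min}=n\bmod 2$ real solutions (possible by Lemma~\ref{thm:seg 1,n}). The restrictions $\bar{g}_i|_{y_n=0}$ then fix the $(x_0,x_1)$-part of the $y_n=0$ branch, while the linear forms $\tilde{h}_i|_{y_n=0}$ provide $n(n+1)$ free real parameters controlling the $x_2$-part. By the induction hypothesis the chamber $\Ucal_{\lfloor(n+1)/2\rfloor}\subset\Gr(\cdot,\cdot)_\RR$ for $\SV_{2,n-1}(1,1)$ is nonempty and open, and the goal is to tune these parameters so the $y_n=0$ branch lands in it. Granting this, the total real solution count is $(n\bmod 2)+\lfloor(n+1)/2\rfloor=\lfloor(n+2)/2\rfloor$, completing the induction.

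The main obstacle is precisely this compatibility step: once $\bar{g}_i$ has been pinned down in $\Ucal_{n\bmod 2}$, the $(x_0,x_1)$-part of the $y_n=0$ branch is no longer free, and varying only $\tilde{h}_i|_{y_n=0}$ produces an affine family of bilinear systems on $\PP^2\times\PP^{n-1}$ of dimension $n(n+1)$ sitting inside a $3n(n+1)$-dimensional coefficient space. It is not automatic that this family meets the target chamber $\Ucal_{\lfloor(n+1)/2\rfloor}$. I would attack this by a transversality/deformation argument: start from a known system in $\Ucal_{\lfloor(n+1)/2\rfloor}$ supplied by the induction hypothesis, deform $\bar{g}_i$ within $\Ucal_{n\bmod 2}$ and $\tilde{h}_i|_{y_n=0}$ simultaneously, and invoke the irreducibility of the Hurwitz hypersurface from Lemma~\ref{lem: topology for Grassmannian chambers}(b) to ensure that a generic path in the affine family either lies in the target chamber or can be continued there by crossing the Hurwitz hypersurface transversally a controlled number of times.
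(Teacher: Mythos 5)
Your high-level structure matches the paper's: induction on $n$, the product decomposition via the bilinear relation $x_2 y_n = 0$ splitting solutions into an $\SV_{1,n}(1,1)$ branch ($x_2=0$) and an $\SV_{2,n-1}(1,1)$ branch ($y_n=0$), and the count $\binom{n+1}{2}+(n+1)=\binom{n+2}{2}$. Your reduction to establishing only $\Ncal(X)_{\min}\le\lfloor(n+2)/2\rfloor$ and then invoking Theorem~\ref{thm: N(X)}(iii) together with Lemma~\ref{lem: SV max possible} is valid (and slightly cleaner), whereas the paper deliberately realizes each target $k$ by an explicit system so as not to rely on Theorem~\ref{thm: N(X)} in this section.

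However, you correctly identify and then fail to close the crucial gap. Once the $\bar g_i$ are chosen so that the $x_2=0$ branch has the desired real root count, the shared coefficients $\bar g_i|_{y_n=0}$ --- a $2\times n\times(n+1)$ coefficient tensor --- are pinned, and only the $\tilde h_i|_{y_n=0}$ part of the $\SV_{2,n-1}(1,1)$ system remains free. Your proposed transversality argument appealing to irreducibility of the Hurwitz hypersurface does not close this: the preimages in the joint parameter space of the two target chambers are nonempty open sets, but nonempty open sets in a connected space need not intersect, and irreducibility of a discriminant gives no control over which chambers a constrained affine slice of the $\SV_{2,n-1}(1,1)$ coefficient space can reach. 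The paper supplies the ingredient you are missing: by parameter counting ($2^2+n^2+(n+1)^2-2>2n(n+1)$) and the Sato--Kimura classification, the group $\GL(2)\times\GL(n)\times\GL(n+1)$ has a Zariski-dense orbit on $\CC^2\otimes\CC^n\otimes\CC^{n+1}$. One then picks \emph{independently} a generic $\SV_{1,n}(1,1)$ system with $k$ real roots and a generic $\SV_{2,n-1}(1,1)$ system with $\ell$ real roots, extracts the shared $2\times n\times(n+1)$ coefficient tensors $A$ and $B$, perturbs so that both lie in the dense orbit (a small real perturbation preserves the real root counts), and applies the resulting $(P,Q,M)\in\GL(2)\times\GL(n)\times\GL(n+1)$ as a change of basis on $\{x_0,x_1\}$, on $\{y_0,\ldots,y_{n-1}\}$, and on the $n+1$ equations to make $A$ and $B$ coincide exactly. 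Only then can the two branch systems be glued into one system of the form you write down. Without this dense-orbit normalization, the compatibility step you flagged remains open and the induction does not close.
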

\begin{proof}
We prove the result by induction on $n$.
When $n=2$, it can be checked using \cite{breiding2018homotopycontinuation,breiding2023certifying} that the polynomial system 
\begin{align*}
2x_0y_2 + x_1y_2 + 2x_2y_0 + x_2y_1 + x_2y_2 &=0\\
 x_0y_0 + 2x_0y_1 + 2x_0y_2 + x_1y_0 - x_1y_2 + 2x_2y_1&=0\\
 x_0y_0 + 2x_0y_1 + 2x_1y_0 - x_2y_0 - x_2y_1&=0\\
 x_0y_1 + 2x_0y_2 + 2x_1y_1 - x_1y_2 - x_2y_0 + 2x_2y_2&=0
\end{align*}
has 0 real solutions.
Therefore, $\Ncal(SV_{2,2}(1,1))_{\min}=0$. 
By Lemma \ref{lem: SV max possible} and Theorem \ref{thm: real trisecant lemma}, $\Ncal(SV_{2,2}(1,1))=\{0,2,4,6\}$.

Now, we assume the result for $n-1$. Suppose that the coordinates for $\PP_\CC^2$ and $\PP_\CC^n$ are $[x_0,x_1,x_2]$ and $[y_0,\ldots,y_n]$.
We have ${n+2\choose 1} = { n+1\choose 2}+{n+1\choose 1}$, i.e. 
$$\deg\SV_{2,n}(1,1)=\deg \SV_{2,n-1}(1,1)+\deg\SV_{1,n}(1,1).$$
We construct a system with real coefficients of the following structure
\begin{align}
\sum_{i\leq 1,j\leq n-1}\lambda_{i,j}^{(1)}x_iy_j+x_2(\sum_{j\leq n-1}\mu_{j}^{(1)} y_j)+y_n(\sum_{i\leq 1}\nu_{i}^{(1)} x_i)&=0\\
\vdots&\\
\sum_{i\leq 1,j\leq n-1}\lambda_{i,j}^{(n+1)} x_i y_j+ x_2 (\sum_{j\leq n-1}\mu_{j}^{(n+1)} y_j)+
y_n(\sum_{i\leq 1}\nu_{i}^{(n+1)} x_i)&=0\\
x_2y_n&=0
\label{eq:segre 2 times n}
\end{align}
For generic choices of coefficients, there is no solution with $x_2=y_n=0$.
So, all solutions are distinct for \eqref{eq:segre 2 times n}. And the number of real solutions is the sum of the number of real solutions for a system in $\SV_{1,n}(1,1)$ (by setting $x_2=0$) and a system in $\SV_{2,n-1}(1,1)$ (by setting $y_n=0$). 

We pick a generic system with real coefficients in $\SV_{1,n}(1,1)$ with all solutions distinct and $k$ of them real. 
We store the coefficients of $x_0y_0,\ldots,x_1y_{n-1}$ of each polynomial in $n+1$ matrices $A_1,\ldots,A_{n+1}$ of size $2\times n$.
We pick a generic system with real coefficients in $\SV_{2,n-1}(1,1)$ with all solutions distinct and $l$ of them real and we store the coefficients of $x_0y_0,\ldots,x_1y_{n-1}$ in $n+1$ matrices $B_1,
\ldots,B_{n+1}$ of size $2\times n$.
If we can find invertible matrices $P\in \GL(2),Q\in \GL(n)$ and $M\in \GL(n+1)$ such that $$P(\sum_{j=1}^{n+1} M_{i,j}A_j)Q=B_i$$ 
for $i=1,\ldots,n+1$, 
then by a change of basis on $\{x_0,x_1\}$ and $\{y_0,\ldots,y_{n-1}\}$ and some linear transformations of the polynomial equations,
we can combine the two systems in $\SV_{1,n}(1,1)$ and $\SV_{2,n-1}(1,1)$ to generate a system in $X$ of the form \eqref{eq:segre 2 times n}. 
The obtained system will have all complex solutions distinct and $k+l$ solutions real. 
By inductive hypothesis and Lemma \ref{thm:seg 1,n}, we can choose $k$ to cover all positive integers in $[0, n+1]$ with $k\equiv n-3 \mod 2$ and $\ell$ to cover all positive integers in $[\lfloor \frac{n-3}{2} \rfloor, {n+1 \choose 2}]$ with $\ell \equiv {n+1 \choose 2} \mod 2$. 
Hence $k+\ell$ covers all positive integers between $\lfloor \frac{n-3}{2} \rfloor+ \frac{1+(-1)^{n}}{2} = \lfloor \frac{n-2}{2} \rfloor$ and ${n+2 \choose 2}$ with the same 
parity as $n+2 \choose 2$.

We show the existence of $P\in \GL(2),Q\in \GL(n)$ and $M\in \GL(n+1)$ such that $P(\sum_{j=1}^{n+1} M_{i,j}A_j)Q=B_i$ for $i=1,\ldots,n+1$.
We stack the $n+1$ matrices $A_i$ to form a $2\times n\times (n+1)$ tensor $A$ and similarly we stack $B_i$ to form a tensor $B$.
The existence of $(P,Q,M)$ is equivalent to that  $A$ and $B$ are in the same orbit under the group action $G=\GL(2)\times \GL(n)\times \GL(n+1)$. 
Note that the numbers of real solutions for the selected polynomial systems in $\SV_{1,n}(1,1)$ and $\SV_{2,n-1}(1,1)$ do not change if we slightly perturb the entries of the matrices $\{A_i\}_{i=1}^{n+1}$, $\{B_i\}_{i=1}^{n+1}$.
So, the collection of matrices $\{A_i\}_{i=1}^{n+1}$ and $\{B_i\}_{i=1}^{n+1}$ can be replaced by $\{A^\prime_i\}_{i=1}^{n+1}\in \Vcal_A$, $\{B^\prime_i\}_{i=1}^{n+1}\in \Vcal_B$ in some nonempty open sets $\Vcal_A$, $\Vcal_B$ around $\{A_i\}_{i=1}^{n+1}$ and $\{B_i\}_{i=1}^{n+1}$ respectively. 
We also let $\Vcal_A,\Vcal_B$ denote the corresponding open sets around the tensors $A,B$.
So, it suffices to show that there are some tensors in $\Vcal_A$ and $\Vcal_B$ that share the same orbit under the action of $G$. 
By counting parameters $2^2+n^2+(n+1)^2-2>2n(n+1)$, so by \cite{sato1977classification} (alternatively, see \cite[Theorem 10.2.2.1]{landsberg2011tensors}), $G$ has a Zariski-dense orbit $\Vcal$ on $\CC^2\otimes \CC^{n} \otimes \CC^{n+1}$. 
So, $\Vcal\cap (\Vcal_A)_\RR$ and $\Vcal\cap (\Vcal_B)_\RR$ are nonempty. 
The result follows by picking $A \in \Vcal\cap (\Vcal_A)_\RR, B \in \Vcal\cap (\Vcal_B)_\RR$.
\end{proof}

\begin{remark}
The above induction does not apply to $\SV_{m,n}(1,1)$ with $m,n\geq 3$ since there is no longer a dense orbit of $\GL(m)\times \GL(n) \times \GL(m+n-1)$ in $\CC^{m}\otimes \CC^n \otimes \CC^{m+n-1}$.
\end{remark}

\begin{proof}[Proof of Theorem \ref{thm: N(X) for small segre-veronese}]
Part (i) is proved in Lemma \ref{thm:seg 1,n} and 
part (ii) in Lemma \ref{lem:seg 2,n}
\end{proof}

\subsection{Varieties with $\mathcal{N}(X)_{\max}$ minimal}\label{subsec: N(X) max minimal}
We say that a $d$ dimensional variety $X\subseteq \PP_\CC^{N-1}$ has $\Ncal(X)_{\max}$ minimal if $\Ncal(X)_{\max}=N-d$.
This implies that in Theorem \ref{thm: real trisecant lemma}$(b)$, the probability of $(X\cap W)_\RR=\{P_1,\ldots,P_n\}$ is 1 where $n=N-d$ and $W=\Span\{P_1,\ldots,P_n\}$ for $n$ generic points $P_1,\ldots,P_n$ on $X_\RR$.

A real variety of minimal degree with a smooth point has $\Ncal(X)_{\max}$ minimal, e.g. the Segre Veronese varieties $\SV_{1,n}(1,1)$, see Lemma \ref{thm:seg 1,n}.
It is a natural question whether there are real varieties of non-minimal degrees and $\Ncal(X)_{\max}$ minimal. 
We give examples of plane curves and hypersurfaces.
\begin{lemma}
Suppose $X$ is a smooth real plane curve with a smooth real point.
Then $\Ncal(X)_{\max}$ is minimal if and only if $X_\RR$ is a convex oval. 
\end{lemma}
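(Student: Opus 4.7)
The plan is to observe that for a smooth real plane curve we have $d=1$ and $N=3$, so $\Ncal(X)_{\max}$ being minimal means $\Ncal(X)_{\max}=N-d=2$, and then to analyze the topology of $X_\RR\subset \PP_\RR^2$.

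The easy direction ($\Leftarrow$) is nearly immediate: a convex oval bounds a compact convex region in a suitable affine chart, and any real line meets the boundary of such a region in at most two points, so $\Ncal(X)_{\max}\leq 2$. Combined with $\Ncal(X)_{\max}\geq N-d=2$ from Theorem~\ref{thm: N(X)}(ii), this forces $\Ncal(X)_{\max}=2$.

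For the converse, assume $\Ncal(X)_{\max}=2$. By Theorem~\ref{thm: N(X)}(i) every element of $\Ncal(X)$ has the parity of $\deg X$, so $\deg X$ is even and $X_\RR$ has no pseudoline component; hence $X_\RR$ is a disjoint union of ovals in $\PP_\RR^2$. I would then rule out two or more ovals: given two ovals $O_1,O_2$ that are nested (one inside the disc bounded by the other), any real line through an interior point of the inner disc must enter and exit both discs and so gives at least four transverse real intersection points with $X$; for two non-nested ovals, a generic real line through interior points of their two bounded discs is transverse to $X$ and meets each oval in at least two points, again giving $\ge 4$. Either conclusion contradicts $\Ncal(X)_{\max}=2$, so $X_\RR$ must be a single oval.

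It remains to show this unique oval is convex. I would work in an affine chart in which $X_\RR$ is bounded (such a chart exists because a single oval misses some real line), and let $K$ denote its convex hull. If $X_\RR$ is not convex, $\partial K$ contains at least one straight segment whose endpoints $p,q$ lie on $X_\RR$, and $X_\RR$ lies locally on a common side of the line $\ell$ through $p$ and $q$; thus $\ell$ is a real bitangent to $X$. Perturbing $\ell$ parallel to itself into the half-plane containing $X_\RR$ splits each simple tangency at $p$ and at $q$ into two nearby transverse real intersection points, producing a real line that meets $X_\RR$ in at least four real points, a contradiction. The main obstacle is precisely this last step: one must verify that a non-convex smooth oval genuinely possesses a supporting bitangent chord (a convex-geometric fact about $\partial K$, since any straight portion of the boundary of the convex hull of a smooth compact set is a bitangent chord of its boundary) and that the chosen perturbation direction unambiguously yields four nearby real intersections (which follows from the local normal form $y=x^2$ of a simple tangency). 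The remaining parts are routine topological book-keeping in $\PP_\RR^2$.
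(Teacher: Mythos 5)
Your proof is correct and structurally follows the same outline as the paper's: the easy direction is immediate from convexity, and the converse proceeds by first reducing to a single oval and then showing that oval is convex. However, you handle the reduction to even degree differently and more cleanly. The paper splits into cases on the parity of $\deg X$ and, in the odd case, argues that a line through the interior of an oval must also cross the pseudoline, giving $\geq 3$ intersections; but this tacitly assumes an oval exists, which can fail for odd-degree curves (e.g.\ a smooth real cubic whose real locus is a single pseudoline). Your observation that $\Ncal(X)_{\max}=2$ forces $\deg X$ to be even by Theorem~\ref{thm: N(X)}(i) sidesteps this entirely, ruling out pseudolines in one stroke. Your bitangent-perturbation argument for convexity is also a genuine addition: the paper simply asserts ``the oval must be convex otherwise there would be a line intersecting $X$ in more than two real points,'' whereas you supply the supporting mechanism (a straight segment of $\partial(\mathrm{conv}\,X_\RR)$ gives a supporting bitangent, and because the curve lies on one side of it, neither tangency can be an inflection, so a small parallel push into the curve's side yields four transverse real points). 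Both of these refinements make your version of the proof more complete.
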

\begin{proof}
When $X$ is a real plane curve with a smooth real point, $\Ncal(X)_{\max}$ is minimal when a line in $\PP_\RR^2$ intersects $X$ in at most two real points.

Assuming that $X_\RR$ is a convex oval, it immediately follows that $\Ncal(X)_{\max}$ is minimal.
Now, we suppose $\Ncal(X)_{\max}$ is minimal.
If $\deg X$ is even, $X_\RR$ consists of ovals and if $\deg X$ is odd, $X_\RR$ consists of ovals and one pseudoline.
In the first case, if we have at least two ovals, we can choose a line passing through two interior points of the two ovals, resulting in at least four intersection points with $X_\RR$. 
In the second case, we can choose a line passing through an interior point of one oval. The line will also intersect the pseudoline, resulting in at least three intersection points with $X_\RR$.
So, $X_\RR$ consists of one oval. 
The oval must be convex otherwise there would be a line intersecting $X$ in more than two real points.
\end{proof}

There exists a plane curve $X$ with $\Ncal(X)_{\max}$ minimal for every even degree.
\begin{example}
For $d\in \NN_+$, the real part of the curve $X_d: x^{2d}+y^{2d}=z^{2d}$ has $\Ncal(X_d)_{\max}$ minimal. 
Since the curve $X_d$ does not intersect $z=0$, we will also denote the plane curve $x^{2d}+y^{2d}=1$ by $X_d$.
The curve $X_d$ has one oval contained in the unit square $[0,1]^2$.
Define $F(x,y) = x^{2d}+y^{2d}-1$.
The curvature of $X_d$ at a point $(x,y)$ is 
$$
\kappa= \frac{F_y^2F_{xx}-2F_xF_yF_{xy}+F_x^2F_{yy}}{(F_x^2+F_y^2)^{\frac{3}{2}}}=(2d-1)\frac{y^{4d-2}x^{2d-2}+x^{4d-2}y^{2d-2}}{(x^{4d-2}+y^{4d-2})^{\frac{3}{2}}}>0.
$$
The curvature is always positive, so $X_d$ has a convex oval, and $\Ncal(X_d)_{\max}$ is minimal.
\end{example}

With similar ideas, we can construct hypersurfaces $X$ with $\Ncal(X)_{\max}$ minimal for every even degree.

\begin{example}\label{ex: 2ne open question}
For $d\in \NN_+$, the hypersurface 
$X_d: x_1^{2d}+\ldots+x_n^{2d}=x_0^{2d}$ has $\Ncal(X_d)_{\max}$ minimal. 
Since $X_d$ does not intersect $x_0=0$, by an abuse of notation, we will also denote the affine hypersurface $x_1^{2d}+\ldots+x_n^{2d}=1$ by $X_d$.
The real part of $X_d$ has one connected component, so it is enough to show that it is convex.
Suppose that $\mathbf{a}=(a_1,\ldots,a_n)$ and $\mathbf{b}=(b_1,\ldots,b_n)$ are two distinct points on $X_d$, then we need to show that 
$$(\lambda a_1+(1-\lambda)b_1)^{2d}+\ldots+(\lambda a_n+(1-\lambda)b_n)^{2d}< 1,
$$
for $\lambda\in (0,1)$.
By Hölder's inequality (see \cite[Equation 2.7.2]{hardy1952inequalities}), 
$$(\lambda a_i^{2d}+(1-\lambda){b_i}^{2d})^{\frac{1}{2d}}(\lambda+1-\lambda)^{\frac{2d-1}{2d}}
\geq 
\lambda^{\frac{1}{2d}}|a_i|\lambda^{\frac{2d-1}{2d}}+
(1-\lambda)^{\frac{1}{2d}}|b_i| (1-\lambda)^{\frac{2d-1}{2d}} 
\geq |\lambda a_i+(1-\lambda)b_i| $$ and the equality holds if and only if $a_i=b_i$.
Hence, 
$$
(\lambda a_1+(1-\lambda)b_1)^{2d}+\ldots+(\lambda a_n+(1-\lambda)b_n)^{2d}
< \sum_{i=1}^n \bigl(\lambda a_i^{2d}+(1-\lambda)b_i^{2d} \bigl)\ = 1.$$
\end{example}

There are curves $X\subseteq \PP_{\CC}^3$ of infinitely many possible degrees with $\Ncal(X)_{\max}$ minimal.

\begin{example}[Kummer, Manevich \cite{kummer2021huisman}
]\label{ex: 2 open question} Let $X$ be the curve in $\PP_\CC^3$ of degree $k+2e+1$ defined in \cite[Construction 1]{kummer2021huisman}.
It has $\Ncal(X) = \{k-1,k+1\}$. 
Take $k=2$. 
Then $X$ has $\Ncal(X)_{\max} = 3 = \codim X +1$  minimal.
Since $e$ can take any positive integer value, there is a curve $X$ in $\PP_\CC^3$ with $\deg X\geq 3$ odd and $\Ncal(X)_{\max}$ minimal.
\end{example}

 We pose the following open problem.
 \begin{question} 
 Does there exist, for every triple of positive integers $d,n,m$, with $d>n+1>2$, $d\equiv n+1 \mod 2$, and $m>1$, an irreducible, smooth, non-degenerate $m$-dimensional real projective variety $X$ of degree $d$ with $\codim X=n>1$ and $\Ncal(X)_{\max}=\codim X+1=n+1$ minimal?
 \end{question}

\section{Applications}\label{sec: applications}
In this section, we discuss applications of the real generalized trisecant trichotomy Theorem~\ref{thm: real trisecant lemma} and our characterization of the set of possible real intersection points Proposition~\ref{thm: N(X)} to independent component analysis, tensor decompositions and the study of typical tensor ranks.

\subsection{Independent component analysis (ICA)}

ICA writes observed variables as linear mixtures of independent sources. That is,
\begin{equation}
\label{eqn:defica}
\mathbf{x}=A\mathbf{s},
\end{equation}
where $\mathbf{s} = (s_1,\ldots,s_J)\T$ is a vector of independent sources, $\mathbf{x}=(x_1,\ldots,x_I)\T$ collects the observed variables, 
and $A \in \RR^{I \times J}$ is an unknown mixing matrix. The ICA model is said to be identifiable if the mixing matrix $A$ can be uniquely recovered, up to scaling and permutation of its columns.

In \cite{wang2024identifiability}, 
a matrix $A\in \RR^{I\times J}$ is called identifiable if for any vector of source variables $\mathbf{s}=(s_1,\ldots,s_J)$ with at most one Gaussian source, one can recover $A$ uniquely up to column scaling and permutation from observation $A\mathbf{s}$.
This translates to the following algebraic geometric criterion.

\begin{theorem}[{\cite[Theorem 1.5]{wang2024identifiability}}]
Fix $A\in \RR^{I\times J}$ with columns $\fa_{1},\ldots, \fa_{J}$ and no pair of columns collinear. Then $A$ is identifiable if and only if the linear span of $\fa_1^{\otimes 2},\ldots, \fa_J^{ \otimes 2}$ does not contain any real matrix $\mathbf{b}^{\otimes 2}$ unless $\mathbf{b}$ is collinear to $\fa_j$ for some $j \in \{ 1, \ldots, J\}$.
\end{theorem}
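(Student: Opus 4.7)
The plan is to translate ICA identifiability into a statement about characteristic functions and then apply the classical Darmois--Skitovich theorem to match non-Gaussian columns across competing decompositions, so that the geometric condition on $\Span\{\fa_j^{\otimes 2}\}$ emerges as the residual obstruction at second order.

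I would start by setting up the characteristic-function formulation. If $A\mathbf{s} \stackrel{d}{=} A'\mathbf{s}'$ with both source vectors having independent components, then
\[
\prod_j \phi_{s_j}(\fa_j^\T t) \; = \; \prod_k \phi_{s'_k}\bigl((\fa'_k)^\T t\bigr),
\]
so taking logarithms and expanding in cumulants yields, for every order $n$,
\[
\sum_j \kappa_n^{(j)}\, \fa_j^{\otimes n} \; = \; \sum_k \kappa_n^{(k')}\, (\fa'_k)^{\otimes n}.
\]
For $n\geq 3$ the Gaussian sources contribute zero, so this is purely a non-Gaussian symmetric tensor identity. Combined with the Darmois--Skitovich theorem (a non-Gaussian $s_j$ appearing in two independent linear combinations forces a zero coefficient), I expect every non-Gaussian $s_j$ to pair uniquely with a non-Gaussian $s'_{\pi(j)}$ along a direction $\fa'_{\pi(j)}$ collinear to $\fa_j$.

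For the ``if'' direction, I would subtract these matched non-Gaussian contributions from the $n=2$ cumulant identity, leaving an equality between two rank-one Gaussian covariance matrices, both necessarily lying in $\Span\{\fa_j^{\otimes 2}\}$. The hypothesis that this span contains no rank-one matrix outside $\{\lambda \fa_j^{\otimes 2}\}$ then forces the Gaussian direction of $A'$ to be collinear with some $\fa_{j_0}$, so $A' = A D P$ for a diagonal $D$ and permutation $P$. For the converse, I would argue contrapositively. Given $\fb^{\otimes 2} = \sum_j c_j \fa_j^{\otimes 2}$ with $\fb$ not collinear to any $\fa_j$, the relation itself encodes a Gaussian-covariance swap: a small Gaussian component along $\fb$ in the $A'$-model can be absorbed into shifts of the variances of the existing $A$-columns using the $c_j$'s as mixing coefficients. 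Choosing the base variances large enough keeps all adjusted variances positive, and since Gaussians are characterised by their covariance, the two distinct models then produce the same distribution.

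The main obstacle is reconciling the ``at most one Gaussian'' restriction in the converse construction, since the natural swap would introduce a Gaussian along $\fb$ while the original model already has a Gaussian along $\fa_{j_0}$. The resolution I expect to need is to observe that the identifiability failure manifests already at the level of second-order cumulants: the linear relation $\fb^{\otimes 2} = \sum_j c_j \fa_j^{\otimes 2}$ means the rank-one covariance of the single allowed Gaussian source can be routed either through $\fb$ in one model or through a mixture of the $\fa_j$-directions (with variances in the non-Gaussian sources adjusted by $c_j\varepsilon$) in the other. The non-collinearity of $\fb$ with every $\fa_j$ then prevents $A'$ from being obtained from $A$ by column scaling and permutation, contradicting identifiability. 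Making this construction fully rigorous under the ``no two columns collinear'' assumption and verifying that Darmois--Skitovich applies in the possibly overcomplete setting ($J>I$) are the technical points requiring the most care.
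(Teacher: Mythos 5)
This theorem is quoted from \cite{wang2024identifiability} and the paper you are reading does not prove it — it simply cites and applies it — so there is no in-paper proof to compare against. I will therefore assess your proposal on its own merits.

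Your overall strategy (characteristic functions $\to$ match non-Gaussian factors $\to$ read off the obstruction at second order) is the natural and, I believe, essentially the intended one. However, three places in your sketch conceal substantial work rather than routine detail.

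First, the step ``every non-Gaussian $s_j$ pairs with a non-Gaussian $s'_{\pi(j)}$ along a collinear direction'' is not an application of the Darmois--Skitovich theorem. Darmois--Skitovich concerns two \emph{independent} linear forms in independent variables; here you have a \emph{distributional equality} $A\mathbf s \stackrel{d}{=} A'\mathbf s'$, which is a different functional equation on the second characteristics $\sum_j \psi_j(\fa_j^\T t) = \sum_k \psi'_k((\fa'_k)^\T t)$. Extracting a bijection between non-Gaussian factors from such an identity is a Linnik/Kagan--Linnik--Rao style decomposition theorem, and in the overcomplete regime $J>I$ (where $A$ has no left inverse, so you cannot reduce to the independence-of-linear-forms setup that Darmois--Skitovich addresses) this is exactly the nontrivial part of the theorem. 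You flag this as needing care, but as written your sketch has no mechanism for it.

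Second, the cumulant identity $\sum_j \kappa_n^{(j)}\fa_j^{\otimes n} = \sum_k \kappa_n^{(k')}(\fa'_k)^{\otimes n}$ requires finite moments of all orders, which the ICA identifiability statement does not assume. The argument has to be carried out on second characteristics near $t=0$ and invoke Marcinkiewicz-type results rather than a global cumulant expansion; this changes the bookkeeping for the $n=2$ residual step.

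Third, the converse construction needs more than is written. A relation $\fb^{\otimes 2} = \sum_j c_j \fa_j^{\otimes 2}$ generically has coefficients of both signs, so one cannot simply ``absorb a Gaussian along $\fb$ into variance shifts of the existing columns'' — the shifted variances would be negative. The standard fix is to split the relation into its positive and negative parts, put independent Gaussian components of variances $|c_j|\varepsilon$ on the appropriate sides, convolve them with non-Gaussian $u_j$'s to keep each source non-Gaussian, and verify that the resulting two mixing matrices are not related by column scaling and permutation while each source vector has at most one Gaussian coordinate. This is doable under the no-collinearity hypothesis, but it is the part of the proof where that hypothesis actually gets used and your current sketch does not make the positivity issue or the source-count bookkeeping explicit.

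In short: right framework, but the Darmois--Skitovich citation stands in for a genuinely harder decomposition theorem, the cumulant step assumes regularity not in the hypotheses, and the converse construction as stated breaks on sign issues. These are the points a complete proof must resolve.
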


This result poses the question: when does a linear space spanned by $J$ points in $\SV_{I-1}(2)$, the second Veronese embedding of $\PP_\CC^{I-1}$, 
intersect the linear space in exactly these $J$ points.
Hence, one obtains a complete classification of generic identifiability via the real generalized trisecant trichotomy on second Veronese varieties.

\begin{theorem}[{\cite[Theorem 1.9]{wang2024identifiability}}]
Let $A \in \RR^{I \times J}$ be generic. Then
\begin{enumerate}[(i)]
    \item If $J \leq {I \choose 2}$ 
    or if $(I, J) = (2, 2)$ or $(3, 4)$, then $A$ is identifiable;
    \item If $J= {I\choose 2}+1$, where $I\geq 4$ and $I \equiv 2,3 \mod 4$, then there is a positive probability that $A$ is identifiable and a positive probability that $A$ is non-identifiable;
    \item If $J> {I\choose 2}+1$ or if $J= {I\choose 2}+1$ and $I\equiv 0,1 \mod 4$, then $A$ is non-identifiable. 
\end{enumerate}
\end{theorem}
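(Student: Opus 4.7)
The plan is to derive Theorem~1.9 by applying the Real Generalized Trisecant Trichotomy (Theorem~\ref{thm: real trisecant lemma}) to the second Veronese variety $X = \SV_{I-1}(2) \subseteq \PP_\CC^{N-1}$, bridged by Theorem~1.5 of~\cite{wang2024identifiability}. That bridge rephrases identifiability of $A$ with columns $\mathbf{a}_1,\ldots,\mathbf{a}_J$ as the condition $(X \cap W)_\RR = \{P_1,\ldots,P_J\}$, where $P_j = \mathbf{a}_j^{\otimes 2}$ and $W = \Span\{P_1,\ldots,P_J\}$. Since $\mathbf{a} \mapsto \mathbf{a}^{\otimes 2}$ parameterizes a Zariski-dense real open subset of $X$, generic $A \in \RR^{I \times J}$ yields general real points on $X_\RR$, and $X$ is smooth with smooth real points, so the hypotheses of Theorem~\ref{thm: real trisecant lemma} are met.

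I first record the numerics: $d = \dim X = I - 1$, $N = \binom{I+1}{2}$, complementary count $N - d = \binom{I}{2} + 1$, and $\deg X = 2^{I-1}$. The strategy is then a case split on $J$ relative to $\binom{I}{2}+1$. For $J \leq \binom{I}{2}$, we have $J + d < N$ and case~(a) of Theorem~\ref{thm: real trisecant lemma} gives identifiability. For $J > \binom{I}{2}+1$, we have $J + d > N$ and case~(c) yields non-identifiability. The delicate boundary $J = \binom{I}{2}+1$ falls into case~(b) exactly when the parities of $J$ and $\deg X$ agree: for $I \geq 2$ the degree $2^{I-1}$ is even, while $J = \binom{I}{2} + 1$ is even iff $\binom{I}{2}$ is odd iff $I \equiv 2,3 \pmod 4$. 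In the mismatched-parity regime $I \equiv 0,1 \pmod 4$, case~(c) of Theorem~\ref{thm: real trisecant lemma} gives non-identifiability, contributing the second clause of~(iii).

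At the matched-parity boundary I split further on $I$. For $I \in \{2,3\}$ one checks $J = \binom{I}{2}+1 = \deg X$, so $X$ is a variety of minimal degree (the plane conic and the Veronese surface); hence $\Ncal(X)_{\max} = \deg X = J$, placing us in case~(b)(iii) of Theorem~\ref{thm: real trisecant lemma} with identifiability probability~$1$. These are the two sporadic pairs in part~(i). For $I \geq 4$ with $I \equiv 2,3 \pmod 4$, I import $\Ncal(X) = \{0,2,\ldots,2^{I-1}\}$ from the example following Theorem~\ref{thm: real trisecant lemma}, combine it with the elementary bound $\binom{I}{2}+1 < 2^{I-1}$, and conclude $\Ncal(X)_{\min} = 0 \leq J < \Ncal(X)_{\max}$, landing in case~(b)(ii) and giving part~(ii).

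The main obstacle is the correct treatment of the sporadic pairs $(2,2)$ and $(3,4)$: these sit on the parity-sensitive boundary and would naively land in case~(b)(ii), yet the theorem demands they land in part~(i). The saving observation is that precisely for $I \in \{2,3\}$ the second Veronese is a variety of minimal degree, collapsing $\Ncal(X)_{\max}$ down to $J$; identifying this coincidence and recognizing how it upgrades \emph{positive probability} to \emph{probability one} is the conceptual step separating~(i) from~(ii) at the boundary.
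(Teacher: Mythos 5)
Your derivation is correct, and it takes precisely the route the paper indicates: the paper cites \cite[Theorem~1.9]{wang2024identifiability} without proof, remarking only that ``one obtains a complete classification of generic identifiability via the real generalized trisecant trichotomy on second Veronese varieties,'' and your write-up supplies exactly the missing details of that reduction. All the numerics check out ($d=I-1$, $N=\binom{I+1}{2}$, $N-d=\binom{I}{2}+1$, $\deg X=2^{I-1}$), the parity analysis matching $I\equiv 2,3\pmod 4$ to the $\deg X\equiv J\pmod 2$ condition is right, and the observation that $(I,J)\in\{(2,2),(3,4)\}$ are precisely the minimal-degree Veronese cases (conic, Veronese surface), which promotes case~(b)(ii) to case~(b)(iii), is the correct resolution of the sporadic pairs. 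One minor point worth noting: instead of importing $\Ncal(X)=\{0,2,\ldots,2^{I-1}\}$ from the example in the introduction, you could derive it directly from Theorem~\ref{thm: N(X)}(iii) together with Theorem~\ref{thm: N(X) for segre-veronese}(i),(ii) (the degree $d_1=2$ is even, so $\Ncal(X)_{\min}=0$ and $\Ncal(X)_{\max}=2^{I-1}$), making the argument self-contained within the paper's own results rather than routing through the cited proof of \cite[Proposition~5.10]{wang2024identifiability}.
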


\subsection{Uniqueness of tensor decompositions}
Consider a generic partially symmetric tensor 
$$T=\sum_{j=1}^J \lambda_j (\mathbf{v}_1^{(j)})^{\otimes 2d_1}\otimes \cdots \otimes (\mathbf{v}_n^{(j)})^{\otimes 2d_n}$$ 
of rank $J$ in $\Sym_{2d_1}\RR^{m_1+1}\otimes \cdots \otimes \Sym_{2d_n}\RR^{m_n+1}$.
We flatten \(T\) into a square matrix
$M$ by grouping its indices into two blocks of equal size.  
Each rank one term $(\mathbf{v}_1^{(j)})^{\otimes 2d_1}\otimes \cdots \otimes (\mathbf{v}_n^{(j)})^{\otimes 2d_n}$ is flattened into the rank one matrix
$\Vect(T_j)\Vect(T_j)\T,$
where 
$T_j=(\mathbf{v}_1^{(j)})^{\otimes d_1}\otimes \cdots \otimes (\mathbf{v}_n^{(j)})^{\otimes d_n}$ for $j=1,\ldots,J$, $(\cdot )\T$ denotes the transpose and \(\Vect(\cdot)\) is the \emph{vectorization operator} that maps a tensor to a column vector by stacking its entries in lexicographic order. 
Thus
$$
M = \sum_{j=1}^J \lambda_j \Vect(T_j) \Vect(T_j)\T.
$$
If $J\leq \dim \Sym_{d_1}\RR^{m_1+1}\otimes \cdots \otimes \Sym_{d_n}\RR^{m_n+1}$, the linear space 
$$W:=\Span\{ \Vect(T_1),\ldots,\Vect(T_J)\}$$ is the column span of $M$.
We also use $W$ to denote $\Span\{ T_1,\ldots,T_J\}$. 

The real part of the Segre-Veronese embedding $\SV_{m_1,\ldots,m_n}(d_1,\ldots,d_n)$  is the projectivization of the space of partially symmetric tensors $\Sym_{d_1}\RR^{m_1+1}\otimes \cdots \otimes \Sym_{d_n}\RR^{m_n+1}$.
The tensor $T$ has a unique real tensor decomposition if 
\begin{align}
(W\cap \SV_{m_1,\ldots,m_n}(d_1,\ldots,d_n))_\RR=\{T_1,\ldots,T_J\}.
\label{eq:unique decomp condition}
\end{align}
The tensor $T$ has a unique tensor decomposition when $J<{m_1+d_1 \choose d_1}\cdots {m_n + d_n \choose d_n}-(m_1+\ldots+m_n)$, by Theorem \ref{thm: real trisecant lemma}.
When $J={m_1+d_1 \choose d_1}\cdots {m_n + d_n \choose d_n}-(m_1+\ldots+m_n)$, the equality
\eqref{eq:unique decomp condition} occurs with positive probability 
if $J\equiv \frac{(m_1+\ldots+m_n)!}{m_1!\cdots m_n!} \prod_{i=1}^n d_i^{m_i} \mod 2$ and 
$\SV_{m_1,\ldots,m_n}(d_1,\ldots,d_n)$ is one of the cases in Theorem \ref{thm: N(X) for segre-veronese}.
This generalizes \cite[Proposition 3.2]{kileel2025subspace} from symmetric tensors to partially symmetric tensors.

\subsection{Typical Tensor Ranks}

In a space of tensors with certain size and order, 
an integer $r$ is called a typical rank if for a tensor $T$ with Gaussian entries, we have $\mathbb{P}\{\rank(T) = r\} > 0$.
In \cite{breiding2025typical}, the authors relate the typical ranks of a tensor in $\RR^m\otimes \RR^n \otimes \RR^
\ell$ to the intersection of a dimension $\ell-1$ linear space with the Segre variety $\SV_{(m-1,n-1)}(1,1)$.

\begin{theorem}[{\cite[Theorems 1.2 and 1.3]{breiding2025typical}}]
\begin{enumerate}[(i)]
\item 
Suppose that $(m-1)(n-1)+1\leq \ell \leq mn$. The typical ranks of a tensor in $\RR^m\otimes \RR^n \otimes \RR^\ell$ are contained in $\{\ell,\ell+1\}$ and $\ell$ is always a typical rank.
\item
For $\ell=(m-1)(n-1)+1$, $\ell+1$ is a typical rank if there is a $\ell-1$ dimensional real projective linear space that intersects the Segre variety $\SV_{m-1,n-1}(1,1)$ transversely in less than $\ell$ real points.
\item
For $\ell>(m-1)(n-1)+1$, $\ell+1$ is a typical rank if the intersection of some $\ell-1$ dimensional real projective linear space and the Segre varierty $\SV_{m-1,n-1}(1,1)$contains only complex points.
\end{enumerate}
\end{theorem}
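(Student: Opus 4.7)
The plan is to translate each claim into a question about the intersection of linear subspaces with the Segre variety $\SV := \SV_{m-1,n-1}(1,1) \subseteq \PP_\CC^{mn-1}$, and then invoke Theorem \ref{thm: N(X)}. The bridge is the flattening correspondence: given $T \in \RR^m \otimes \RR^n \otimes \RR^\ell$, the third flattening $T_{(3)} : \RR^\ell \to \RR^{mn}$ has, for generic $T$, an $\ell$-dimensional image $W \subseteq \RR^{mn}$. One checks that $\rank_\RR(T)$ equals the minimum number of real rank-one matrices whose span contains $W$; in particular $\rank_\RR(T) \geq \ell$, with equality iff $W \cap \SV$ contains $\ell$ linearly independent real points.

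For part (i), the lower bound $\rank_\RR(T) \geq \ell$ follows from the above together with the classical nondefectivity fact that the $\ell$-th secant variety of $\SV$ fills $\PP_\CC^{mn-1}$ in the stated range. For the upper bound $\rank_\RR(T) \leq \ell+1$, I would adjoin a single real rank-one $M_0 \notin W$ and argue that the $(\ell+1)$-dimensional space $W + \langle M_0 \rangle$ generically contains $\ell+1$ real rank-one matrices spanning it, since adjoining one dimension makes the complex intersection with $\SV$ positive-dimensional with nonempty real part. That $\ell$ is typical follows from an explicit real rank-$\ell$ construction and the observation that having $\ell$ independent real rank-ones in $W$ is an open condition on the flattening subspace.

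For parts (ii) and (iii), the main input is Theorem \ref{thm: N(X)} together with Lemma \ref{lem: topology for Grassmannian chambers}: the chambers $\Ucal_k \subseteq \Gr(\ell-1, mn-1)_\RR$ for $k \in \Ncal(\SV)$ are nonempty and open. In (ii), when $\ell-1 = (m-1)(n-1)$, the hypothesis places a transverse real $W$ in some $\Ucal_k$ with $k < \ell$. Since the map sending $T$ to its flattening subspace is smooth with full-rank differential at generic $T$, the preimage of $\Ucal_k$ has positive Lebesgue measure on tensor space; for such $T$, $\rank_\RR(T) > \ell$, so by (i) equals $\ell+1$, hence $\ell+1$ is typical. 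Part (iii) proceeds analogously: when $\ell-1 > (m-1)(n-1)$ and some real $W$ has $W \cap \SV_\RR = \emptyset$, the pullback of the corresponding open neighbourhood in $\Gr(\ell-1, mn-1)_\RR$ yields a positive-measure set of tensors with no real rank-one in $W$, forcing $\rank_\RR(T) > \ell$.

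The main obstacle will be the rigorous transition from the existence of a real subspace with prescribed intersection behavior to the positive measure of the corresponding set of tensors; this hinges on showing that the flattening map is open at generic points, so that open chambers in the Grassmannian pull back to positive-measure sets in $\RR^m \otimes \RR^n \otimes \RR^\ell$. A secondary technical point is the upper bound in (i), where the construction of real rank-$(\ell+1)$ decompositions must be carried out carefully to handle degenerate cases where $W$ contains no real rank-one matrix.
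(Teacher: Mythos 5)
The statement you are trying to prove is only \emph{cited} in the paper (attributed to Breiding--Michałek, \cite{breiding2024typical}); the paper gives no proof of it. However, the paper immediately afterwards proves a generalization to partially symmetric tensors, and its proof follows exactly your flattening-to-Grassmannian strategy: pass to the slice span $W=\langle T_1,\ldots,T_\ell\rangle$, get the lower bound $\rank_\RR T\geq\ell$ from $\dim W=\ell$, and translate questions about typical ranks into questions about $(W\cap X)_\RR$. Your handling of parts (ii) and (iii) via the openness of the chambers $\Ucal_k$ (Lemma \ref{lem: topology for Grassmannian chambers}) together with the submersivity of $T\mapsto W$ is the right argument, and you are right that the submersivity step is what transfers open Grassmannian conditions to positive Lebesgue measure on tensor space.

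There is, however, a gap in your upper bound in part (i). You assert that $W+\langle M_0\rangle$ ``generically contains $\ell+1$ real rank-one matrices spanning it, since adjoining one dimension makes the complex intersection with $\SV$ positive-dimensional with nonempty real part.'' Positive dimension over $\CC$ plus one real smooth point is not enough: the real branch through $M_0$ could a priori sit inside a proper real linear subspace of $W+\langle M_0\rangle$, so the $\ell+1$ real points you pick need not span. What you actually need is that the irreducible curve component of $(W+\langle M_0\rangle)\cap\SV$ through $M_0$ is non-degenerate in $W+\langle M_0\rangle$ over $\CC$, and that its real points near the smooth real point $M_0$ are Zariski-dense in that component; both require an argument because $W+\langle M_0\rangle$ is not a fully generic linear space (it contains the fixed $W$). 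The paper's own proof of the generalization elides the same point: it appeals to Theorem \ref{thm: real trisecant lemma}$(c)$, but that result is stated for $W'$ spanned by general points of $X$, which is not the case here, and the assertion ``$W'$ is spanned by real points in $X$'' is not established by that theorem. Finally, a minor note: the lower bound $\rank_\RR T\geq\ell$ does not need secant-variety nondefectivity; it is immediate from the fact that any rank-$r$ real decomposition forces $W$ into the real span of $r$ rank-one matrices, while $\dim W=\ell$ generically (this is exactly what the paper cites Friedland for).
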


We can generalize the result to partially symmetric tensors with multidegree $(d_1,\ldots,d_n,1)$, by Theorem \ref{thm: N(X) for segre-veronese}.

\begin{theorem}
Suppose ${m_1+d_1 \choose d_1}\cdots {m_n + d_n \choose d_n}-(m_1+\ldots+m_n)+1\leq \ell \leq {m_1+d_1 \choose d_1}\cdots {m_n + d_n \choose d_n}$. 
\begin{enumerate}[(i)]
\item 
The typical ranks of the tensors $\Sym_{d_1} \RR^{m_1+1}\times \cdots \times \Sym_{d_n} \RR^{m_n+1} \times \RR^\ell$  are contained in $\{\ell,\ell+1\}$ and $\ell$ is always a typical rank. 
\item
When $\ell={m_1+d_1 \choose d_1}\cdots {m_n + d_n \choose d_n}-(m_1+\ldots+m_n)+1$ and $\Ncal(\SV_{(m_1,\ldots,m_n)}(1,\ldots,1))_{\min}<\ell$, then $\ell+1$ is a typical rank. 
In particular, this happens in the following scenarios:
\begin{enumerate}[(a)]
    \item one of $d_i$ is even;
    \item all $d_i$ are odd, at least one of $m_1,\ldots,m_n$ is odd when $m_1+\ldots+m_n$ is even or at least two of $m_1\ldots,m_n$ are odd when $m_1+\ldots+m_n$ is odd;
    \item $n=2,d_1=d_2=1$ and $\min\{m_1,m_2\}\in \{1,2\}$. 
\end{enumerate}
\end{enumerate}
\end{theorem}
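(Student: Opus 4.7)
The plan is to adapt the argument of \cite{breiding2024typical}, which establishes this result for ordinary tensors $\RR^m \otimes \RR^n \otimes \RR^\ell$ using the Segre variety $\SV_{(m-1,n-1)}(1,1)$, by replacing that Segre variety with the Segre-Veronese variety $X = \SV_{(m_1,\ldots,m_n)}(d_1,\ldots,d_n) \subseteq \PP_\CC^{N-1}$, where $N = \prod_i \binom{m_i+d_i}{d_i}$ and $M = \dim X = \sum m_i$. The key correspondence is that a tensor $T$ in $\Sym_{d_1}\RR^{m_1+1} \otimes \cdots \otimes \Sym_{d_n}\RR^{m_n+1} \otimes \RR^\ell$, viewed as a linear map $\RR^\ell \to \RR^N$, has for generic $T$ an $\ell$-dimensional image $V_T$, hence a projective subspace $[V_T] \in \Gr(\ell-1,N-1)_\RR$. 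A real rank-$\ell$ decomposition of $T$ corresponds to $\ell$ real rank-one partially symmetric tensors in $[V_T] \cap X$ that span $[V_T]$; in particular, $\rank_\RR T > \ell$ precisely when $[V_T]$ is not spanned by its real points of intersection with $X$.

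For part (i), the generic complex rank equals $\ell$ throughout the range, since Theorem \ref{thm:complex_trichotomy} guarantees that a generic $[V_T]$ of complementary or larger dimension meets $X$ in sufficiently many complex points to span $[V_T]$. The upper bound, that typical real rank is $\leq \ell + 1$, is obtained by adapting the conjugate-pair argument of \cite{breiding2024typical}: one additional real rank-one summand suffices to convert a complex rank-$\ell$ decomposition into a real one. That $\ell$ is itself always typical follows from Theorem \ref{thm: N(X) for segre-veronese}(i), which gives $\Ncal(X)_{\max} = \deg X \geq \ell$ (using non-degeneracy of $X$); by Lemma \ref{lem: topology for Grassmannian chambers}(a) there is an open set of $[V_T]$ with at least $\ell$ real intersection points with $X$, and pulling back through the continuous flattening map $T \mapsto [V_T]$ produces an open, hence positive-measure, set of tensors of real rank exactly $\ell$.

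For part (ii), we are in the boundary case where $[V_T]$ has complementary projective dimension to $X$. The hypothesis $\Ncal(X)_{\min} < \ell$ together with Lemma \ref{lem: topology for Grassmannian chambers}(a) furnishes a non-empty Euclidean-open set $\Ucal \subset \Gr(\ell-1,N-1)_\RR$ of subspaces meeting $X$ transversely in strictly fewer than $\ell$ real points. The flattening $T \mapsto [V_T]$ is continuous and dominant, so its preimage of $\Ucal$ is a non-empty open set of tensors. For such $T$, a real rank-$\ell$ decomposition would provide $\ell$ real rank-one tensors in $[V_T] \cap X$ spanning $[V_T]$; by the linear-independence argument from the proof of Proposition \ref{prop: complementary dimension case}, these $\ell$ real points are among the real intersection points, forcing $\#(X \cap [V_T])_\RR \geq \ell$, contradicting the defining property of $\Ucal$. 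Hence $\rank_\RR T \geq \ell + 1$, and by part (i) equality holds. Any strictly positive distribution assigns positive probability to this open set, so $\ell + 1$ is a typical rank.

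The three explicit subcases reduce to verifying $\Ncal(X)_{\min} < \ell$ for the relevant Segre-Veronese $X$. Cases (a) and (b) follow directly from parts (ii) and (iii) of Theorem \ref{thm: N(X) for segre-veronese}, both yielding $\Ncal(X)_{\min} = 0 < \ell$. Case (c) uses Theorem \ref{thm: N(X) for small segre-veronese}: for $\SV_{(1,n)}(1,1)$, $\Ncal(X)_{\min}$ is $0$ or $1$ depending on the parity of $n+1$ by part (i); and for $\SV_{(2,n)}(1,1)$, $\Ncal(X)_{\min} \leq \lfloor (n+2)/2 \rfloor$ by part (ii); both are strictly less than the corresponding $\ell$. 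The main obstacle I anticipate is executing the $\ell+1$ upper bound in part (i) cleanly: the conjugate-pair absorption in \cite{breiding2024typical} is phrased for three-factor tensors with no symmetry, and transplanting it to the partially symmetric multi-factor setting requires careful bookkeeping to ensure that the single additional real rank-one summand respects all symmetric factor structures simultaneously.
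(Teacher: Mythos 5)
Your plan for part (ii) and the three subcases (a)--(c) matches the paper: both reduce to $\Ncal(X)_{\min}<\ell$ and then invoke Theorem \ref{thm: N(X) for segre-veronese} and Theorem \ref{thm: N(X) for small segre-veronese}. But there are two substantive discrepancies in part (i).

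First, for the upper bound $\rank_\RR T\leq \ell+1$, you plan to transplant the conjugate-pair absorption argument of \cite{breiding2024typical} and correctly flag that it may not carry over to partially symmetric tensors with several symmetric factors. The paper sidesteps this entirely: it takes $W'=W\oplus\langle p\rangle$ for a generic real smooth $p\in X$ and applies Theorem \ref{thm: real trisecant lemma}(c), which gives infinitely many real points of $X$ in $W'$ (since $W'$ contains a smooth real point of $X$ and $\dim W'+\dim X>N-1$); together with the spanning lemma this yields $\ell+1$ real rank-one summands and hence $\rank_\RR T\leq\ell+1$. That trick is simpler and robust to the symmetric factor structure, so the obstacle you anticipate is real, and the paper's route avoids it. If you pursue your plan, you would need to actually carry out the conjugate-pair bookkeeping; as written it is an unproved step.

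Second, your argument that $\ell$ is always typical is incorrect as stated. You assert $\Ncal(X)_{\max}=\deg X\geq\ell$ "using non-degeneracy," but non-degeneracy only gives $\deg X\geq\codim X+1$, while $\ell$ ranges up to $N=\prod_i\binom{m_i+d_i}{d_i}$, which is typically much larger than $\deg X$. Concretely, for $X=\SV_{(1,1)}(1,1)$ one has $N=4$, $M=2$, $\deg X=2$, yet the theorem takes $\ell\in\{3,4\}$; and whenever $X$ has minimal degree (e.g.\ $\SV_{(1,n)}(1,1)$ or $\SV_{(1)}(d)$), already $\ell=N-M+1>\deg X$. Moreover, $\Ncal(X)$ is defined via linear spaces of the complementary dimension $N-1-\dim X$, whereas $[V_T]$ has projective dimension $\ell-1\geq N-M>N-1-M$; the intersection $X\cap[V_T]$ is positive-dimensional, so the finite count encoded by $\Ncal(X)_{\max}$ does not apply to $[V_T]$ at all. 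This step needs a different justification: one must show a positive-measure set of $[V_T]$ whose (positive-dimensional) real intersection with $X$ spans $[V_T]$.
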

\begin{proof}
Let $X=\SV_{(m_1,\ldots,m_n)}(d_1,\ldots,d_n)$.
The number ${m_1+d_1 \choose d_1}\cdots {m_n + d_n \choose d_n}-(m_1+\ldots+m_n)$ is the codimension of $X$ in its ambient space.
Let $T$ be a generic tensor in $\Sym_{d_1} \RR^{m_1+1}\times \cdots \times \Sym_{d_n} \RR^{m_n+1} \times \RR^\ell$ with slices $T_1,\ldots,T_\ell$ in $\Sym_{d_1} \RR^{m_1+1}\times \cdots \times \Sym_{d_n} \RR^{m_n+1}$.
Let $W = \langle T_1,\ldots, T_\ell \rangle$ be the linear span of the slices. 
With probability one, $\dim W =\ell-1$ and so the rank of T is at least $\ell$ by \cite[Theorem 2.4]{friedland2012generic} (see also \cite[Theorem 2.1]{breiding2025typical}). 
So, typical ranks are bounded below by $\ell$.
If $W \cap X$ contains at least $\ell$ real points, the rank of T is $\ell$. 
Otherwise, $\ell$ is not the rank of $T$ but $W'=W\oplus \langle p \rangle$ intersects $X$ in infinitely many real points by Theorem \ref{thm: real trisecant lemma}$(c)$
for some generic $p\in X$ and $W'$ is spanned by real points in $X$.
Hence, $\rank T\leq \ell+1$

Now suppose $\ell={m_1+d_1 \choose d_1}\cdots {m_n + d_n \choose d_n}-(m_1+\ldots+m_n)+1$.
The number $\ell+1$ is a typical rank when there is a positive probability for a dimension $\ell-1$ real linear space to intersect $\SV_{(m_1,\ldots,m_n)}(d_1,\ldots,d_n)$ in less than $\ell$ real points.
This happens precisely when $\Ncal(X)_{\min}<\ell$.
By Theorem \ref{thm: N(X) for segre-veronese},  we have $\Ncal(X)_{\min}=0$ for the first two cases listed above. 
For the third case, if $\min\{m_1,m_2\}=1$ then $\Ncal(X)_{\min}=0$ or 1. 
If $\min\{m_1,m_2\}=2$ and without loss of generality $m_1=2$, then $\Ncal(X)_{\min} \leq  \lfloor \frac{m_2+2}{2} \rfloor <\ell=2m_2+1$.
\end{proof}

{\bf Acknowledgements. } 
We thank Greg Blekherman, 
Alexander Esterov, Ilia Itenberg, Claus Scheiderer, Boris Shapiro, Frank Sottile, and  Bernd Sturmfels for helpful comments and discussions. 
We thank Mario Kummer for pointing us to Examples \ref{ex: open question 1} and \ref{ex: 2 open question} from \cite{kummer2021huisman,kummer2022hyperbolic}.
We thank the organizers of the ICERM workshop `Connecting Higher-Order Statistics and Symmetric Tensors', where the initial discussions on the project took place.

\bibliographystyle{alpha}
\bibliography{reference}
\end{document}